\documentclass[11pt,english,a4paper]{smfart}

\usepackage[french,main=english]{babel}
\usepackage[utf8, latin1]{inputenc}

\usepackage{graphicx}
\usepackage{ae,amsfonts,euscript,enumerate}
\usepackage{amssymb}
\usepackage{amsmath}
\usepackage[cm]{aeguill}
\usepackage{xcolor}

\tolerance=10000

\newcommand\qbi[3]{{{#1}\atopwithdelims[]{#2}}_{#3}}

\newtheorem{thm}{Theorem}[section]

\newtheorem*{thmC}{Theorem C}

\newtheorem{defn}[thm]{Definition}

\newtheorem{prop}[thm]{Proposition}
\newtheorem{propdef}[thm]{Proposition-definition}
\newtheorem{lem}[thm]{Lemma}
\newtheorem{cor}[thm]{Corollary}

\newtheorem{rem}[thm]{Remark}

 \setcounter{tocdepth}{1}

\numberwithin{equation}{section}
\usepackage{fullpage}
\usepackage[T1]{fontenc}
\usepackage{graphicx}
\usepackage{amsmath}
\usepackage{amsfonts}
\usepackage{amssymb}
\usepackage{a4wide}
\DeclareMathOperator{\lcm}{lcm}

\author{B. Adamczewski}
\address{
Univ Lyon, Universit\'e Claude Bernard Lyon 1, CNRS UMR 5208, Institut Camille Jordan, F-69622 Villeurbanne Cedex, France}
\email{Boris.Adamczewski@math.cnrs.fr}

\author{J. P.~Bell}
\address{
Department of Pure Mathematics, University of Waterloo, Waterloo, ON, Canada, N2L 3G1}
\email{jpbell@uwaterloo.ca}

\author{\'E. Delaygue}
\address{
Univ Lyon, Universit\'e Claude Bernard Lyon 1, CNRS UMR 5208, Institut Camille Jordan, F-69622 Villeurbanne Cedex, France}
\email{delaygue@math.univ-lyon1.fr}

\author{F. Jouhet}
\address{
Univ Lyon, Universit\'e Claude Bernard Lyon 1, CNRS UMR 5208, Institut Camille Jordan, F-69622 Villeurbanne Cedex, France}
\email{jouhet@math.univ-lyon1.fr}

\title{Cyclotomic valuation of $q$-Pochhammer symbols and $q$-Integrality of basic hypergeometric series}
\date{}

\thanks{This project has received funding from the ANR project De Rerum Natura (ANR-19-CE40-0018).} 

\subjclass{33D15, 13A18; 33C20, 05A30.}

\keywords{Hypergeometric and basic hypergeometric series, $q$-Pochhammer symbols, cyclotomic polynomials and valuations, Dwork maps, Christol step functions.}

\begin{document}

\begin{abstract}
We give a formula for the cyclotomic valuation of $q$-Pochhammer symbols in terms of (generalized) Dwork maps. 
We also obtain a criterion for the $q$-integrality of basic hypergeometric series in terms of certain step functions, which generalize Christol step functions. 
This provides suitable $q$-analogs of two results proved by Christol: a formula for the $p$-adic valuation of Pochhammer symbols and 
a criterion for the $N$-integrality of hypergeometric series. 
\end{abstract}

\maketitle

\section{Introduction}\label{sec:intro}

\emph{Factorial ratios} form a remarkable class of sequences appearing regularly in combinatorics, number theory (\textit{e.g.} \cite{ASI, Bober, Chebyshev, Soundararajan}), mathematical physics and geometry (\textit{e.g.} \cite{BvS,Candelas, Delaygue1}). They are sequences of rational numbers of the form 
$$
Q_{e,f}(n):=\frac{(e_1n)!\cdots(e_vn)!}{(f_1n)!\cdots(f_wn)!}  \quad\quad  n\geq 0\,,
$$
where $v$ and $w$ are non-negative integers, and $e:=(e_1,\ldots,e_v)$ and $f:=(f_1,\ldots,f_w)$ are vectors whose coordinates are positive integers. 
Understanding how arithmetic properties of factorial ratios may depend on the integer parameters $e_i$ and $f_i$ leads to 
interesting and challenging problems.  
Landau \cite{Landau} introduced the step function 
\begin{equation}\label{eq:deltadef}
\Delta_{e,f}(x):=\sum_{i=1}^v\lfloor e_ix\rfloor-\sum_{j=1}^w\lfloor f_jx\rfloor  
\end{equation} 
and proved that the $p$-adic valuation of factorial ratios is given by 
\begin{equation*}\label{eq: Galois stable q=1}
v_p\left(Q_{e,f}(n)\right)=\sum_{\ell=1}^\infty\Delta_{e,f}\left(\frac{n}{p^\ell}\right) \,.
\end{equation*}
This result generalizes the classical Legendre formula:  
$v_p(n!)=\sum_{\ell=1}^\infty\left\lfloor n/p^\ell\right\rfloor$.  
Surprisingly, certain basic properties of the Landau function $\Delta_{e,f}$ turn out to characterize fundamental 
arithmetic properties of the corresponding factorial ratio 
and its generating series. Indeed, assuming for simplicity that $\sum_i e_i=\sum_j f_j$, we have the following results. 
\begin{itemize}
\item [(i)] The sequence $(Q_{e,f}(n))_{n\geq 0}$ takes integer values if and only if $\Delta_{e,f}(x)\geq0$, $\forall x\in[0,1]$.
\item [(ii)] The sequence $(Q_{e,f}(n))_{n\geq 0}$ has the $p$-Lucas property for all primes $p$
\footnote{That is $Q_{e,f}(pn+r)\equiv Q_{e,f}(n)Q_{e,f}(r)\mod p$ for every $r\in \{0,\ldots,p-1\}$ and $n\geq 0$.}  if and only if $\Delta_{e,f}(x)\geq1$, $\forall x\in[m_{e,f},1]$, 
where $m_{e,f}:=1/\mbox{max}\{e_1,\ldots,e_v,f_1,\ldots,f_w\}$.
\item [(iii)] The generating series of $(Q_{e,f}(n))_{n\geq 0}$ is algebraic\footnote{This means that  the power series $\sum_{n=0}^\infty Q_{e,f}(n)x^n\in\mathbb Q[[x]]$ is algebraic over the field $\mathbb Q(x)$.} if and only if $\Delta_{e,f}(x)\in\{0,1\}$, $\forall x\in[0,1]$.
\end{itemize}

Items (i) and (iii) were respectively proved by Landau \cite{Landau} (see also \cite{Bober}) and Rodriguez-Villegas \cite{R-V} (as a consequence of \cite{BH}). Item (ii) 
corresponds to \cite[Proposition 8.3]{ABD1} and was derived from \cite[Theorem 3]{DelaygueAp}. 

Choosing for example $e=(30,1)$ and $f=(15,10,6)$, a straightforward computation shows that the corresponding sequence takes integer values, 
does not have the $p$-Lucas property for all primes, and has an algebraic generating series. At first sight, proving this result is not easy:    
for example, Rodriguez-Villegas \cite{R-V} observed that the degree of algebraicity is $483\, 840$. 

\medskip

These results have been generalized, replacing factorials by Pochhammer symbols and factorial ratios by hypergeometric sequences. 
We recall that the Pochhammer symbol $(x)_n$, also called rising factorial, is defined as 
$$(x)_n=x(x+1)\cdots(x+n-1)\,,$$ 
if $n\geq 1$ and $(x)_0=1$, so that $(1)_n=n!$ and 
\begin{equation}\label{eq: dn!} 
(dn)!=d^{dn}\left(\frac{1}{d}\right)_n\cdots\left(\frac{d-1}{d}\right)_n(1)_n\,.
\end{equation}
Given $\alpha\in\mathbb Q\setminus\mathbb Z_{\leq 0}$ and $p$ a prime such that $v_p(\alpha)\geq 0$, 
Christol \cite{Christol} provided the following formula\footnote{More exactly, Formula \eqref{eq:vpPocChristol} is a reformulation 
with floor functions of Christol's result, as given  in \cite[Section~5.3]{DRR}.} 
 for the $p$-adic valuation of Pochhammer symbols:
\begin{equation}\label{eq:vpPocChristol}
v_p((\alpha)_n)=\sum_{\ell=1}^\infty\left\lfloor\frac{n-\lfloor 1-\alpha\rfloor}{p^\ell}-D_{p}^\ell(\alpha)+1\right\rfloor \,,
\end{equation}
where $D_{p}(\alpha)$ is defined as the unique rational number whose denominator is not divisible by $p$ and such that $p D_{p}(\alpha)-\alpha$ belongs to $\{0,\dots,p-1\}$. The maps  
$\alpha\mapsto D_{p}(\alpha)$ were first introduced by Dwork \cite{Dwork} and are now referred to as Dwork maps.  
When $\alpha=1$, we have $D_{p}(1)=1$ and we retrieve Legendre's formula. Note also that  if $v_p(\alpha)<0$, then 
we simply have $v_p((\alpha)_n)=nv_p(\alpha)$. 

\medskip

Given two vectors $\boldsymbol{\alpha}:=(\alpha_1,\ldots,\alpha_v)$ and $\boldsymbol{\beta}:=(\beta_1,\ldots,\beta_w)$,
whose coordinates belong to $\mathbb Q\setminus \mathbb Z_{\leq 0}$, we define the (generalized) \emph{hypergeometric sequence}  
\begin{equation}\label{eq: hypseq}
Q_{\boldsymbol{\alpha},\boldsymbol{\beta}}(n) := \frac{(\alpha_1)_n\cdots (\alpha_v)_n}{(\beta_1)_n\cdots(\beta_w)_n} \in \mathbb Q \quad\quad n\geq 0 \,.
\end{equation}
The above restriction on the rational parameters  $\beta_j$ ensures that $Q_{\boldsymbol{\alpha},\boldsymbol{\beta}}(n)$ is well-defined for all $n\geq 0$. We also assume that the 
parameters $\alpha_i$ do not belong to $\mathbb Z_{\leq 0}$, since otherwise $Q_{\boldsymbol{\alpha},\boldsymbol{\beta}}(n)$ 
would vanish for all $n$ large enough, which would be irrelevant for our purpose. 
These sequences and their generating series have attracted a lot of attention since the time of Gauss. 
According to  \eqref{eq: dn!}, the study of factorial ratios reduces to the study of certain hypergeometric sequences. 
Again, understanding how arithmetic properties of hypergeometric sequences may depend on the rational parameters $\alpha_i$ and $\beta_j$ leads to fascinating questions. 

We let $d_{\boldsymbol{\alpha},\boldsymbol{\beta}}$ denote the least common multiple of the denominators of the parameters $\alpha_i$ and $\beta_j$. 
 In \cite{Christol}, Christol  introduced new step functions $\xi_{\boldsymbol{\alpha},\boldsymbol{\beta}}(a,\cdot)$, for every $a\in \{1,\ldots,d_{\boldsymbol{\alpha},\boldsymbol{\beta}}\}$ coprime to $d_{\boldsymbol{\alpha},\boldsymbol{\beta}}$, which play the same role for hypergeometric sequences as the Landau function $\Delta_{e,f}$ does 
  for factorial ratios. We refer the reader to Section \ref{sec:fcChristol} for a definition.  

Analogs of (i)--(iii) have been respectively obtained by Christol \cite{Christol}, Adamczewski, Bell, and Delaygue \cite{ABD1}, and 
Beukers and Heckman \cite{BH}\footnote{We 
 refer the reader to \cite{Christol,ABD1,BH} for precise statements. The reformulation in terms of Christol step functions of the famous interlacing criterion of Beukers and Heckmann can be found in \cite{DRR}.}.   We point out that, for the analog of (i),  
it is more natural to consider $N$-integrality of the sequence $(Q_{\boldsymbol{\alpha},\boldsymbol{\beta}}(n))_{n\geq 0}$, that is to ask whether there exists a non-zero 
integer $a$ such that $a^nQ_{\boldsymbol{\alpha},\boldsymbol{\beta}}(n)\in \mathbb Z$ for all $n\geq 0$. Also, for the analog of (ii), it is more natural to consider the $p$-Lucas 
property for all but finitely many primes in a given residue class  modulo $d_{\boldsymbol{\alpha},\boldsymbol{\beta}}$.  
Finally, the required conditions about the Landau function must now be satisfied by the Christol functions $\xi_{\boldsymbol{\alpha},\boldsymbol{\beta}}(a,\cdot)$ for all $a\in \{1,\ldots,d_{\boldsymbol{\alpha},\boldsymbol{\beta}}\}$ coprime to $d_{\boldsymbol{\alpha},\boldsymbol{\beta}}$. 
In particular, the analog of (i) proved by Christol~\cite{Christol} reads as follows. 

\begin{thmC} Let $\boldsymbol{\alpha}:=(\alpha_1,\ldots,\alpha_u)$ and $\boldsymbol{\beta}:=(\beta_1,\ldots,\beta_v)$ be two vectors 
whose coordinates belong to $\mathbb Q\setminus \mathbb Z_{\leq 0}$. Then the two following assertions are equivalent.

\begin{itemize}
\item[{\rm (a)}] The hypergeometric sequence $(Q_{\boldsymbol{\alpha},\boldsymbol{\beta}}(n))_{n\geq 0}$ is $N$-integral.

\item[{\rm (b)}]  For every $a$ in $\{1,\dots,d_{\boldsymbol{\alpha},\boldsymbol{\beta}}\}$ coprime to $d_{\boldsymbol{\alpha},\boldsymbol{\beta}}$ 
and all $x$ in $\mathbb{R}$, we have $\xi_{\boldsymbol{\alpha},\boldsymbol{\beta}}(a,x)\geq 0$.
\end{itemize}
 \end{thmC}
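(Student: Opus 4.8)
The plan is to relate, for each prime $p$ not dividing $d:=d_{\boldsymbol{\alpha},\boldsymbol{\beta}}$, the valuation $v_p(Q_{\boldsymbol{\alpha},\boldsymbol{\beta}}(n))$ to the Christol step function attached to the residue class of $p$ modulo $d$, and then to read off $N$-integrality from the signs of these functions. First I would dispose of the finitely many primes dividing $d$. For such a prime $p$, each factor $(\alpha_i)_n$ with $v_p(\alpha_i)<0$ contributes exactly $n\,v_p(\alpha_i)$ by the remark following \eqref{eq:vpPocChristol}, while the factors with non-negative $p$-adic valuation are controlled by \eqref{eq:vpPocChristol}; in all cases one gets a linear lower bound $v_p(Q_{\boldsymbol{\alpha},\boldsymbol{\beta}}(n))\geq -C_p\,n$. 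As there are only finitely many such primes, their total negative contribution is absorbed into a factor $c^n$. Hence $(Q_{\boldsymbol{\alpha},\boldsymbol{\beta}}(n))_{n\geq 0}$ is $N$-integral if and only if $v_p(Q_{\boldsymbol{\alpha},\boldsymbol{\beta}}(n))\geq 0$ for every $n\geq 0$ and every prime $p\nmid d$. (Note that the reverse direction of this reduction uses that a single prime can never break $N$-integrality, precisely because of the linear lower bound just mentioned.)

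Next I would rewrite these valuations. For $p\nmid d$ every parameter satisfies $v_p(\alpha_i),v_p(\beta_j)\geq 0$, so \eqref{eq:vpPocChristol} applies to each Pochhammer symbol and, summing, yields
\[
v_p\bigl(Q_{\boldsymbol{\alpha},\boldsymbol{\beta}}(n)\bigr)=\sum_{\ell=1}^\infty\left(\sum_{i=1}^u\left\lfloor\frac{n-\lfloor 1-\alpha_i\rfloor}{p^\ell}-D_p^\ell(\alpha_i)+1\right\rfloor-\sum_{j=1}^v\left\lfloor\frac{n-\lfloor 1-\beta_j\rfloor}{p^\ell}-D_p^\ell(\beta_j)+1\right\rfloor\right).
\]
The key observation is that $D_p$ acts on a rational with denominator dividing $d$ as multiplication by $p^{-1}$ modulo $1$, since $pD_p(\alpha)-\alpha\in\mathbb Z$; hence $D_p^\ell(\alpha_i)$ depends on $p$ only through $p\bmod d$. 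Writing $a\equiv p\pmod d$, the bracketed quantity at level $\ell$ is exactly the value $\xi_{\boldsymbol{\alpha},\boldsymbol{\beta}}(a_\ell,n/p^\ell)$ of a Christol step function (as defined in Section \ref{sec:fcChristol}), where the residue $a_\ell$ coprime to $d$ depends only on $a$ and $\ell$. This identification is the bridge between \eqref{eq:vpPocChristol} and the statement, and it presents $v_p(Q_{\boldsymbol{\alpha},\boldsymbol{\beta}}(n))$ as an infinite sum of values of the functions $\xi_{\boldsymbol{\alpha},\boldsymbol{\beta}}(a,\cdot)$.

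With this in hand, (b)$\Rightarrow$(a) is immediate: if $\xi_{\boldsymbol{\alpha},\boldsymbol{\beta}}(a,x)\geq 0$ for all coprime $a$ and all $x$, then every summand above is non-negative, so $v_p(Q_{\boldsymbol{\alpha},\boldsymbol{\beta}}(n))\geq 0$ for all $p\nmid d$ and all $n$, whence $N$-integrality by the first step. For (a)$\Rightarrow$(b) I would argue by contraposition. Suppose $\xi_{\boldsymbol{\alpha},\boldsymbol{\beta}}(a_0,x_0)<0$ for some $a_0$ coprime to $d$; since this function is a step function with jumps only at rationals of denominator dividing $d$, it is negative on a whole interval. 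By Dirichlet's theorem there are infinitely many primes $p$ in the residue class modulo $d$ for which some level $\ell$ realizes the twist $a_0$, and for each such (large) $p$ I would select an integer $n$ and a level $\ell$ so that $n/p^\ell$ lands in that interval while the summands at the remaining levels contribute $0$, forcing $v_p(Q_{\boldsymbol{\alpha},\boldsymbol{\beta}}(n))<0$. Having infinitely many primes at which the sequence has a term of negative valuation contradicts $N$-integrality, completing the contrapositive.

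The main obstacle I anticipate lies in this last construction: converting the single pointwise inequality $\xi_{\boldsymbol{\alpha},\boldsymbol{\beta}}(a_0,x_0)<0$ into an honest negative valuation. One must simultaneously produce, by Diophantine approximation, an integer $n$ and a level $\ell$ with $n/p^\ell$ in the target interval, and guarantee that the summands at the other levels $\ell'\neq\ell$ do not cancel the negativity. For large $p$ the levels with $p^{\ell'}>n$ vanish automatically, but the finitely many low levels require genuine care, typically by arranging the $p$-adic digits of $n$ so that those levels contribute $0$ as well. Controlling this interplay between the combinatorics of the step functions and the choice of $n$ is the technical heart of the argument.
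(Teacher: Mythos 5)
Note first that the paper does not prove Theorem~C: it is quoted from Christol \cite{Christol}, and the only argument in the text with the same architecture is the proof of its $q$-analog (Proposition~\ref{propo: N-integrality} combined with Lemmas~\ref{lem: sauts} and~\ref{lem: deltaxi}). Your overall plan --- Christol's valuation formula \eqref{eq:vpPocChristol}, a reduction to well-chosen primes, and a passage from the floor-function summands to the step functions $\xi_{\boldsymbol{\alpha},\boldsymbol{\beta}}(a,\cdot)$ --- is indeed that architecture. But two of your steps are wrong as stated. The asserted equivalence ``$N$-integral $\iff v_p(Q_{\boldsymbol{\alpha},\boldsymbol{\beta}}(n))\geq 0$ for every $n$ and every prime $p\nmid d$'' fails in the forward direction: a single prime $p\nmid d$ with $v_p(Q_{\boldsymbol{\alpha},\boldsymbol{\beta}}(n))$ negative but bounded below by $-Cn$ does not obstruct $N$-integrality (absorb $p^{C}$ into $a$), exactly as your own parenthetical remark concedes. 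The correct reduction, mirrored by \eqref{eq: iff1} and Proposition~\ref{propo: N-integrality} in the $q$-setting, is to non-negativity for \emph{all but finitely many} primes; your Dirichlet argument survives because a failure of (b) produces infinitely many bad primes.

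More seriously, the ``key observation'' that the level-$\ell$ bracket equals $\xi_{\boldsymbol{\alpha},\boldsymbol{\beta}}(a_\ell,n/p^\ell)$ pointwise is false. Take $\boldsymbol{\alpha}=(3/2)$, $\boldsymbol{\beta}=(1)$, $p=3$, $\ell=1$, $n=1$: since $\lfloor 1-3/2\rfloor=-1$ and $D_3(3/2)=1/2$, the bracket for $\alpha=3/2$ is $\lfloor 2/3-1/2+1\rfloor=1$ and the one for $\beta=1$ is $0$, so the difference is $1$ (consistently with $v_3(3/2)=1$); yet $a=1$ and $3/2\not\preceq 1/3$ because $\langle 3/2\rangle=1/2>1/3$, so $\xi_{\boldsymbol{\alpha},\boldsymbol{\beta}}(1,1/3)=0$. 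What is actually true --- and is the real content of the theorem --- is that for $p^\ell$ large enough the two step functions have the same multiset of jump amplitudes occurring in the same order (the jumps sit at $D_{p^\ell}(\alpha_i)+\lfloor 1-\alpha_i\rfloor/p^\ell$ on one side and at $a_\ell\alpha_i$ for the order $\preceq$ on the other), hence the same \emph{set of values}; this is precisely the ordering-of-jumps statement the paper isolates as Lemma~\ref{lem: sauts} and exploits in Lemma~\ref{lem: deltaxi}, and it requires $p^\ell$ to exceed an explicit bound (compare Proposition~\ref{prop Db}: $D_{p}(\alpha)=\langle a\alpha\rangle$ only for $p\geq\mathfrak{n}_\alpha$). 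Your proposal needs this lemma in place of the claimed identity, needs the small levels $\ell$ to be excused (again the ``all but finitely many primes'' formulation), and finally leaves unexecuted the construction of $n$ in the contrapositive, which you yourself flag as the technical heart. So the route is the right one, but the bridge between \eqref{eq:vpPocChristol} and the functions $\xi_{\boldsymbol{\alpha},\boldsymbol{\beta}}(a,\cdot)$ is asserted in a form that is false, and the argument is not complete without it.
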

 
A remarkable feature of (i)--(iii) and of the results proved in  \cite{Christol,ABD1,BH} is  that they provide simple algorithms, given in terms of suitable step functions, 
that allow one to decide whether certain fundamental arithmetic properties of factorial ratios and hypergeometric sequences hold \footnote{We also refer the reader to \cite{AS} for more general results about integrality  of $A$-hypergeometric series.}.

\subsection{Main results}\label{sec:mainresults}

In this paper, our main objective is to prove $q$-analogs of Formula \eqref{eq:vpPocChristol} and Theorem C. 
From now on, we let $q$ denote a fixed transcendental complex number. 

We are going to define suitable $q$-analogs of the Pochhammer symbol $(\alpha)_n$ and of the hypergeometric term 
$Q_{\boldsymbol{\alpha},\boldsymbol{\beta}}(n)$, which belong to the field $\mathbb Q(q)$. In this framework, 
the $p$-adic valuations are replaced by the cyclotomic valuations, while   
the notion of $N$-integrality is replaced by $q$-integrality. 
For every positive integer $b$, we let $\phi_b(q)\in\mathbb Z[q]$ denote the $b$th cyclotomic polynomial and $v_{\phi_b}$ stands for 
the valuation of $\mathbb Q(q)$ associated with $\phi_b(q)$ (see Section \ref{subsec: cyclo} for a definition).  
A sequence $(R(q;n))_{n\geq 0}$ with values in $\mathbb{Q}(q)$ and first term $R(q;0)=1$ is said to be \textit{$q$-integral} 
if there exists  $C(q)\in\mathbb{Z}[q]\setminus\{0\}$ such that $C(q)^nR(q;n)\in\mathbb{Z}[q]$ for all $n\geq 0$. 

\medskip

For every positive integer $n$, the $q$-analog of the integer $n$ is defined as $[n]_q=1+q+\cdots+q^{n-1}$, while $[0]_q=0$. It is actually convenient to write 
$$
[n]_q=\frac{1-q^n}{1-q}\,,
$$
while keeping in mind that this ratio belongs to $\mathbb{Z}[q]$. It follows that 
$$
[n]_q=\prod_{b\geq 2,\,b\mid n}\phi_b(q)\,,
$$
which specializes as 
\begin{equation}\label{eq: simple fact}
n=\prod_{b\geq 2,\,b\mid n}\phi_b(1)\,.
\end{equation}
We recall that $\phi_b(1)=1$ if $b$ is divisible by at least two distinct primes, while $\phi_{p^\ell}(1)=p$ when $p$ is a prime and $\ell$ is a positive integer. 
We deduce that 
\begin{equation}\label{eq:LegendrePhib}
v_p(n)=\sum_{\ell=1}^\infty v_{\phi_{p^\ell}}([n]_q)\,.
\end{equation}
This formula shows that, in some sense, the arithmetic of $q$-analogs is finer than that of integers.
The $q$-analog of $n!$ is defined as 
$$
[n]!_q:=\prod_{i=1}^n\frac{1-q^i}{1-q}\,\cdot
$$
Given $\alpha=r/s$ a rational number,  the $q$-analog of the Pochhammer symbol $(\alpha)_n$ is  most often defined as (see, for instance, \cite{GR}) 
$$
\frac{(q^\alpha;q)_n}{(1-q)^n}\in \mathbb Q(q^{1/s})\,,
$$
where we let $(a;q)_n:=\prod_{i=0}^{n-1}(1-aq^i)$ denote the $q$-Pochhammer symbol (also called the $q$-shifted factorial).  
Substituting $q$ by $q^s$, we obtain a slightly different $q$-analog of $(\alpha)_n$: 
\begin{equation}\label{eq: qalpha}
\frac{(q^r;q^s)_n}{(1-q^s)^n}\in \mathbb Q(q)\,.
\end{equation}
We note that 
$$
\lim_{q\to 1}\frac{(q^\alpha;q)_n}{(1-q)^n} = \lim_{q\to 1}\frac{(q^r;q^s)_n}{(1-q^s)^n}=(\alpha)_n\,.
$$
The latter has several advantages which are discussed in Section \ref{sec: choice}.  
In the end, it is sufficient for our discussion to consider $q$-Pochhammer symbols of the form 
$$
(q^r;q^s)_n:= \prod_{i=0}^{n-1}(1-q^{r+si})\in\mathbb{Z}[q^{-1},q]\,,
$$
where $r$ and $s$ are two integers, $s\not=0$.  
This product is non-zero if and only if $r/s\notin\mathbb{Z}_{\leq 0}$ or $n\leq -r/s$. The usual extension to negative arguments $n$ is given by 
\begin{equation}\label{eq: hgneg}
(q^r;q^s)_n=\prod_{i=1}^{-n}\frac{1}{(1-q^{r-is})}=\frac{1}{(q^{r-s};q^{-s})_{-n}}\, ,
\end{equation}
which is well-defined if and only if $r/s\notin\mathbb{Z}_{>0}$ or $n> -r/s$. 

\medskip

Our first main result, which provides a $q$-analog of Formula \eqref{eq:vpPocChristol} as well as its extension to negative arguments, 
involves a generalization of Dwork maps where the prime number $p$ is replaced 
by an arbitrary positive integer $b$. 
Given a positive integer $b$ and a rational number $\alpha$ whose denominator is coprime to $b$, we show in Section \ref{sec: dwork} that there exists a unique rational number 
$D_b(\alpha)$ whose denominator is coprime to $b$ and which satisfies $bD_b(\alpha)-\alpha\in\{0,\dots,b-1\}$. 
When $b=p$ is prime, we retrieve the classical Dwork map $D_p$.

\begin{thm}\label{thm: valuationq-pocch}
Let $r$ and $s$ be two integers, $s\neq 0$, and $\alpha:=r/s$. Let $b$ be a positive integer, $c:=\gcd(r,s,b)$, $b':=b/c$, and $s':=s/c$. 
Let $n\in\mathbb Z$ be  such that $(q^r;q^s)_n$ is well-defined and non-zero. Then we have 
$$
v_{\phi_b}((q^r;q^s)_n)=\left\{\begin{array}{cl}
\left\lfloor cn/b-  D_{b'}\left(\alpha\right)-\frac{\lfloor 1-\alpha\rfloor}{b'}        \right\rfloor+1 & \textup{if $\gcd(s',b')=1$ \,,}\\
0 & \textup{otherwise.}
\end{array}\right.
$$
\end{thm}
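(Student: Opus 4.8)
The plan is to reduce the computation to a lattice-point count, exploiting that $v_{\phi_b}$ is additive on products. First I would record the elementary fact that for a nonzero integer $m$ one has $v_{\phi_b}(1-q^m)=1$ when $b\mid m$ and $v_{\phi_b}(1-q^m)=0$ otherwise. Indeed, for $m>0$ the factorization $1-q^m=-\prod_{d\mid m}\phi_d(q)$ into distinct, hence pairwise coprime, cyclotomic polynomials shows that $\phi_b$ divides $1-q^m$ exactly once precisely when $b\mid m$; the case $m<0$ follows since $q$ is a unit for every $v_{\phi_b}$, so that $v_{\phi_b}(1-q^m)=v_{\phi_b}(1-q^{-m})$.

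Next, applying additivity to $(q^r;q^s)_n=\prod_{i=0}^{n-1}(1-q^{r+si})$ for $n\geq 0$, and to the extension \eqref{eq: hgneg} for $n<0$, I would express the valuation as a signed count. Writing $f(i):=1$ if $b\mid r+si$ and $f(i):=0$ otherwise, both regimes are captured by the single antidifference $F$ determined by $F(0)=0$ and $F(n+1)-F(n)=f(n)$; concretely $F(n)=\#\{0\leq i<n:f(i)=1\}$ for $n\geq 0$ and $F(n)=-\#\{n\leq i<0:f(i)=1\}$ for $n<0$, the minus sign for negative $n$ coming from the reciprocals in \eqref{eq: hgneg}. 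One then checks $v_{\phi_b}((q^r;q^s)_n)=F(n)$ in both cases.

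Then I would analyze the congruence $r+si\equiv 0\pmod b$. Dividing by $c=\gcd(r,s,b)$ turns it into $r'+s'i\equiv 0\pmod{b'}$ with $r=cr'$, $s=cs'$, $b=cb'$; this is solvable if and only if $\gcd(s',b')\mid r'$, which, since $c$ is the full gcd, is equivalent to $\gcd(s',b')=1$. This yields the stated dichotomy: when $\gcd(s',b')>1$ no index contributes and the valuation is $0$. When $\gcd(s',b')=1$ the solutions form a single residue class $i\equiv i_0\pmod{b'}$ with $i_0\in\{0,\dots,b'-1\}$, and counting the members of this class in a window of length $n$ gives $F(n)=\lfloor(n-1-i_0)/b'\rfloor+1$ for every $n\in\mathbb Z$ (one verifies this closed form directly against the recursion $F(n+1)-F(n)=f(n)$ and $F(0)=0$).

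Finally I would identify $i_0$ with the generalized Dwork map and carry out the floor bookkeeping. Since $\alpha=r'/s'$ with $\gcd(s',b')=1$, the number $(\alpha+i_0)/b'$ has denominator coprime to $b'$ and satisfies $b'\cdot(\alpha+i_0)/b'-\alpha=i_0\in\{0,\dots,b'-1\}$, so by the uniqueness in the definition of $D_{b'}$ from Section \ref{sec: dwork} we obtain $i_0=b'D_{b'}(\alpha)-\alpha$. Substituting this and using $cn/b=n/b'$, the claimed expression becomes $\lfloor(M-t)/b'\rfloor+1$ with $M:=n-i_0\in\mathbb Z$ and $t:=\alpha+\lfloor 1-\alpha\rfloor$, so it remains to prove $\lfloor(M-1)/b'\rfloor=\lfloor(M-t)/b'\rfloor$. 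The key point is that $\lfloor 1-\alpha\rfloor$ forces $t\in(0,1]$, and for such $t$ no integer lies in the interval $\bigl((M-1)/b',(M-t)/b'\bigr]$, so the two floors coincide. I expect this last floor identity, together with the uniform treatment of negative $n$ via the signed antidifference, to be the only delicate part; everything else is a direct translation of the cyclotomic valuation into the count of indices $i$ with $b\mid r+si$.
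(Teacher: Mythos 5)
Your proposal is correct, and for $n\geq 0$ it follows essentially the same path as the paper's proof of Proposition \ref{prop: valuationq-pocch}: reduce to counting indices $i$ with $b\mid r+si$, observe that after dividing by $c$ the congruence $r'+s'i\equiv 0\bmod b'$ is solvable exactly when $\gcd(s',b')=1$ (giving the dichotomy), identify the base point of the solution class with $b'D_{b'}(\alpha)-\alpha$ via the uniqueness in Proposition-definition \ref{propdef: db}, and then do floor bookkeeping using the fact that $\alpha+\lfloor 1-\alpha\rfloor=\langle\alpha\rangle\in(0,1]$ (your "no integer in $((M-1)/b',(M-t)/b']$" step is the same estimate the paper packages as Lemma \ref{lem: gamma} and the equivalences \eqref{eq: equi1}--\eqref{eq: equi2}). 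Where you genuinely diverge is the treatment of $n<0$: the paper reduces to the non-negative case through the reflection $(q^r;q^s)_n=1/(q^{r-s};q^{-s})_{-n}$ and then invokes Lemma \ref{lem:NegativeVal}, whose proof requires the identity $D_{b'}(1-\alpha)=1-D_{b'}(\alpha)$ and a separate case analysis according to whether $\alpha\in\mathbb{Z}$. You instead observe that both regimes are governed by a single antidifference $F$ with $F(0)=0$ and $F(n+1)-F(n)=f(n)$, and that the closed form $\lfloor(n-1-i_0)/b'\rfloor+1$ satisfies the same recursion and initial condition, so the two agree on all of $\mathbb{Z}$ at once. This is a clean simplification that eliminates the reflection lemma from the proof of the theorem; the trade-off is that the paper's Lemma \ref{lem:NegativeVal} is not pure overhead, since it is reused in Section \ref{subsec: negativearguments} to derive the integrality criteria for negative arguments, whereas your argument yields only the valuation formula itself.
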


\begin{rem}\emph{
Recall that $v_{\phi_b}((1-q^s)^n)=nv_{\phi_b}(1-q^s)$ and $v_{\phi_b}(1-q^s)=1$ if $b$ divides $s$ and $0$ otherwise.  Hence   
we can  easily derive from Theorem \ref{thm: valuationq-pocch} a formula for the $\phi_b$-valuation of the $q$-analog of $(\alpha)_n$ given in  
\eqref{eq: qalpha}.}
\end{rem}

We now define $q$-analogs of hypergeometric sequences with rational parameters. 
For $i\in\{1,\dots,v\}$ and $j\in\{1,\dots,w\}$, we let $(r_i,s_i)$ and $(t_j,u_j)$ be pairs of integers 
such that $s_i\not=0$ and $u_j\not=0$. We set 
$$
\mathbf{r}:=((r_1,s_1),\dots,(r_v,s_v))\quad\textup{and}\quad \mathbf{t}:=((t_1,u_1),\dots,(t_w,u_w))\, ,
$$
together with $\boldsymbol{\alpha}:=(\alpha_1,\dots,\alpha_v)$ and $\boldsymbol{\beta}:=(\beta_1,\dots,\beta_w)$, where $\alpha_i:=r_i/s_i$ and $\beta_j:=t_j/u_j$. 
Let $d_{\mathbf{r},\mathbf{t}}:=\lcm\{s_1,\dots,s_v,u_1,\dots,u_w\}$. 
With this notation, we define the \emph{$q$-hypergeometric sequence}  
\begin{equation}\label{qPocchRatios}
Q_{\mathbf{r},\mathbf{t}}(q;n):=\frac{(q^{r_1};q^{s_1})_n\cdots(q^{r_v};q^{s_v})_n}{(q^{t_1};q^{u_1})_n\cdots(q^{t_w};q^{u_w})_n} \quad\quad n\geq 0\,.
\end{equation}
Note that, similarly to \eqref{eq: hypseq},  
$Q_{\mathbf{r},\mathbf{t}}(q;n)$ is well-defined for all $n\geq 0$ when the rational numbers $\beta_j$ do not belong to $\mathbb Z_{\leq 0}$. 
In addition, we assume that the rational numbers $\alpha_i$ do not belong to $\mathbb Z_{\leq 0}$, since otherwise $Q_{\mathbf{r},\mathbf{t}}(q;n)$ 
would vanish for all $n$ large enough, which would be irrelevant for our purpose.

Our second main result is a $q$-analog of Theorem C. It involves new step functions $\Xi_{\mathbf{r},\mathbf{t}}(b,\cdot)$,  
$b\in\{1,\ldots,d_{\mathbf{r},\mathbf{t}}\}$, which generalize Christol step functions. They are introduced in Section~\ref{sec: effi}, where we also show that 
$\Xi_{\mathbf{r},\mathbf{t}}(b,\cdot)=\xi_{\boldsymbol{\alpha},\boldsymbol{\beta}}(a,\cdot)$ for $b$ coprime to $d_{\mathbf{r},\mathbf{t}}$ and  
 $ba\equiv 1\bmod d_{\mathbf{r},\mathbf{t}}$.   
Thus, we only define new functions for $b$ not coprime to $d_{\mathbf{r},\mathbf{t}}$. 
The appearance of these new functions makes the proof of Theorem~\ref{thm:N-integralityPolyn} substantially more tricky 
than that of Theorem~C. 

\begin{thm}\label{thm:N-integralityPolyn} We continue with the previous notation and assumptions.  We also assume that $s_1,\ldots,s_v$ are positive.  
Then the two following assertions are equivalent.
\begin{itemize}
\item[$\mathrm{(i)}$] The sequence $(Q_{\mathbf{r},\mathbf{t}}(q; n))_{n\geq 0}$ is $q$-integral.
\item[$\mathrm{(ii)}$] For every $b\in\{1,\dots,d_{\mathbf{r},\mathbf{t}}\}$ and all $x$ in $\mathbb{R}$, we have $\Xi_{\mathbf{r},\mathbf{t}}(b,x)\geq 0$.
\end{itemize}
\end{thm}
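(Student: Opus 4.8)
The plan is to translate $q$-integrality into a statement about cyclotomic valuations and then read those valuations off from the step functions $\Xi_{\mathbf{r},\mathbf{t}}(b,\cdot)$ through Theorem~\ref{thm: valuationq-pocch}. First I would record the elementary factorization: up to sign, $1-q^m$ equals $q^{\min(m,0)}$ times a product of cyclotomic polynomials, so that $Q_{\mathbf{r},\mathbf{t}}(q;n)=\pm\,q^{a(n)}\prod_{b\geq 1}\phi_b(q)^{m_b(n)}$, where $a(n)=v_q(Q_{\mathbf{r},\mathbf{t}}(q;n))$ is the order at $q=0$ and $m_b(n)=v_{\phi_b}(Q_{\mathbf{r},\mathbf{t}}(q;n))$. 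Testing a candidate $C(q)=q^{e_0}\prod_b\phi_b(q)^{e_b}$ with $e_0,e_b\geq 0$ and only finitely many $e_b$ nonzero, and using that cyclotomic polynomials are monic with integer coefficients (so no separate content condition arises), one sees that $C(q)^nQ_{\mathbf{r},\mathbf{t}}(q;n)\in\mathbb{Z}[q]$ for all $n$ if and only if $a(n)\geq -e_0 n$ and $m_b(n)\geq -e_b n$ for every $b$ and $n$. Since each $m_b(n)$ is, by Theorem~\ref{thm: valuationq-pocch}, a difference of finitely many floor functions and is therefore automatically bounded from below by an affine function of $n$, the only genuine constraint coming from an individual $b$ is finite support. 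Thus I would reduce assertion (i) to the conjunction of two facts: the set of $b$ for which $m_b(n)<0$ for some $n$ is finite, and $a(n)$ admits a linear lower bound in $n$.

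Next I would use Theorem~\ref{thm: valuationq-pocch} to identify $m_b(n)$ with the value of $\Xi_{\mathbf{r},\mathbf{t}}(b,\cdot)$ introduced in Section~\ref{sec: effi}: summing the single-symbol formula over the numerator and subtracting it over the denominator produces exactly the combination of floors defining $\Xi_{\mathbf{r},\mathbf{t}}(b,\cdot)$. The decisive structural input is that, as $b$ ranges over all positive integers, the data entering the formula --- the gcd's $c_i=\gcd(r_i,s_i,b)$, the reduced moduli $b'_i=b/c_i$, the selection rule $\gcd(s'_i,b'_i)=1$, and the generalized Dwork images $D_{b'_i}(\alpha_i)$, all of which depend on $b$ only through bounded residue data --- take finitely many values. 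Hence there are only finitely many distinct functions $\Xi_{\mathbf{r},\mathbf{t}}(b,\cdot)$, each realized at some $b\in\{1,\dots,d_{\mathbf{r},\mathbf{t}}\}$, and the family $b$ organizes into finitely many ``types'', within each of which the arguments sampled by $m_b(n)$ as $n$ varies and $b$ runs through the type become dense in the domain of the corresponding function. This density statement is what collapses the range of $b$ in (ii) to $\{1,\dots,d_{\mathbf{r},\mathbf{t}}\}$.

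With this dictionary, both implications follow. For (ii)$\Rightarrow$(i): if $\Xi_{\mathbf{r},\mathbf{t}}(b,x)\geq 0$ for all $b\leq d_{\mathbf{r},\mathbf{t}}$ and all real $x$, then every type is non-negative, so $m_b(n)\geq 0$ for all but finitely many $b$; the finitely many exceptional $b$, together with a linear lower bound for $a(n)$ read off from the asymptotic slopes of the relevant $\Xi_{\mathbf{r},\mathbf{t}}(b,\cdot)$, are absorbed into a single $C(q)$, yielding $q$-integrality. For (i)$\Rightarrow$(ii) I would argue contrapositively: if $\Xi_{\mathbf{r},\mathbf{t}}(b_0,x_0)<0$ for some admissible $b_0$ and some $x_0$, then, because all $b$ of the type of $b_0$ share this function and sample its argument densely, one gets $m_b(n)<0$ for infinitely many distinct $b$, contradicting the finiteness of the bad set; the remaining region of $x$ is matched with the order $a(n)$ at $q=0$, whose failure to be linearly bounded below also contradicts $q$-integrality.

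I expect the main obstacle to be exactly the case of $b$ not coprime to $d_{\mathbf{r},\mathbf{t}}$, where $\Xi_{\mathbf{r},\mathbf{t}}(b,\cdot)$ is a genuinely new function rather than a Christol function $\xi_{\boldsymbol{\alpha},\boldsymbol{\beta}}(a,\cdot)$. There the gcd's $c_i$ need not agree across the various factors, so the reduced moduli $b'_i$ and the vanishing rule $\gcd(s'_i,b'_i)=1$ make different subsets of numerator and denominator symbols contribute for different $b$; controlling the resulting step functions uniformly, and in particular verifying that a negative value at one admissible $b_0$ really propagates to infinitely many $b$ of the same type, is the delicate point. A secondary technical issue is the bookkeeping of $a(n)$ at $q=0$, where the hypothesis that $s_1,\dots,s_v$ are positive is used to ensure that the numerator contributes only boundedly many negative powers of $q$, so that any failure of a linear lower bound for $a(n)$ originates in the denominator and is again detected by the asymptotics of the step functions.
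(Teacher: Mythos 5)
Your overall architecture matches the paper's: reduce $q$-integrality to the non-negativity of the cyclotomic valuations $v_{\phi_b}(Q_{\mathbf{r},\mathbf{t}}(q;n))$ for all but finitely many $b$ (this is the paper's Proposition~\ref{propo: N-integrality} and Corollary~\ref{coro:integrality}, and the hypothesis that $s_1,\dots,s_v$ are positive enters exactly as you say, through the $q$-valuation), and then collapse these infinitely many conditions to the finitely many functions $\Xi_{\mathbf{r},\mathbf{t}}(b,\cdot)$ with $b\le d_{\mathbf{r},\mathbf{t}}$. However, the step where you perform this collapse rests on a false premise. You assert that $m_b(n)=v_{\phi_b}(Q_{\mathbf{r},\mathbf{t}}(q;n))$ is read off from $\Xi_{\mathbf{r},\mathbf{t}}(\underline{b},\cdot)$ and that, for $b$ in a fixed ``type'', one is sampling a single function densely. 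In fact $m_b(n)=\Delta_b^{\mathbf{r},\mathbf{t}}(n/b)$, and the functions $\Delta_b^{\mathbf{r},\mathbf{t}}$ do \emph{not} stabilize along residue classes modulo $d_{\mathbf{r},\mathbf{t}}$: each jump abscissa has the form $(D_{b/c}(\alpha)+k)/c+\lfloor 1-\alpha\rfloor/b$, and the term $\lfloor 1-\alpha\rfloor/b$ genuinely depends on $b$ itself, not only on bounded residue data. So there are infinitely many distinct functions $\Delta_b^{\mathbf{r},\mathbf{t}}$, and your density-within-a-type argument does not apply. (Your picture is correct only when all the parameters $\alpha_i,\beta_j$ lie in $(0,1]$, where these perturbations vanish.)

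What actually makes the reduction work is a statement about \emph{orderings of jumps}, not about the functions themselves: Lemma~\ref{lem: sauts} shows that for $b$ large the $\leq$-ordering, with multiplicities, of the perturbed abscissas $\gamma_i$ of $\Delta_b^{\mathbf{r},\mathbf{t}}$ coincides with the $\preceq$-ordering, for the Christol order, of the abscissas $\Gamma_i$ of $\Xi_{\mathbf{r},\mathbf{t}}(\underline{b},\cdot)$; the tie-breaking clause of $\preceq$ (comparing $\lfloor 1-a\alpha\rfloor$ when the fractional parts agree) is precisely what encodes the limiting effect of the $\lfloor 1-\alpha\rfloor/b$ terms as $b\to\infty$. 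From this one gets that $\Delta_b^{\mathbf{r},\mathbf{t}}([0,1])$ and $\Xi_{\mathbf{r},\mathbf{t}}(\underline{b},\mathbb{R})$ coincide as \emph{sets of values} (Lemma~\ref{lem: deltaxi}), even though the functions differ; combined with the $1/b$-separation of the jumps and the quasi-periodicity $\Delta_b(x+k)=\Delta_b(x)+k\Delta_b(1)$ of Lemma~\ref{lem:DeltaJumps} (needed to pass from the sample points $n/b$ to all of $[0,1]$, and from $[0,1]$ to $\mathbb{R}_{\ge 0}$, for each fixed large $b$), this yields both implications. Your proposal names this as ``the delicate point'' but supplies no mechanism for it, and it never invokes the Christol order, which is indispensable here; as written, the equivalence between assertion (ii) and the eventual non-negativity of the $\Delta_b^{\mathbf{r},\mathbf{t}}$ is not established.
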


A generalization of Theorem \ref{thm:N-integralityPolyn} with no restriction on the parameters $s_1,\dots,s_v\in\mathbb{Z}\setminus\{0\}$ is stated as Theorem \ref{thm:N-integrality} in Section \ref{sec:Efficient}.

\begin{rem}\emph{Strictly speaking, $Q_{\mathbf{r},\mathbf{t}}(q;n)$ is not a $q$-analog of the hypergeometric term 
$\mathcal{Q}_{\boldsymbol{\alpha}, \boldsymbol{\beta}}(n)$. Instead, \eqref{eq: qalpha} shows that a suitable $q$-analog  can be defined as  
$$
 \mathcal{Q}'_{\mathbf{r},\mathbf{t}}(q; n):= \left(\frac{\prod_{j=1}^w(1-q^{u_j}) }{\prod_{i=1}^v (1-q^{s_i})}\right)^{n} \mathcal{Q}_{\mathbf{r},\mathbf{t}}(q; n)\,.
$$
Indeed, we have 
\begin{equation}\label{eq: conf}
 \underset{q\to 1}{\lim}\,\mathcal{Q}'_{\mathbf{r},\mathbf{t}}(q; n)=\mathcal{Q}_{\boldsymbol{\alpha}, \boldsymbol{\beta}}(n)\,.
\end{equation}
Since the $q$-integrality of $(\mathcal{Q}_{\mathbf{r},\mathbf{t}}(q; n))_{n\geq 0}$ is equivalent to that of  
$(\mathcal{Q}'_{\mathbf{r},\mathbf{t}}(q; n))_{n\geq 0}$, we find more convenient to work with the simpler expression $\mathcal{Q}_{\mathbf{r},\mathbf{t}}(q; n)$.} 
 \end{rem}

We infer from \eqref{eq: conf} that the $q$-integrality of the sequence $(Q_{\mathbf{r},\mathbf{t}}(q;n))_{n\geq 0}$ implies the $N$-integrality of the sequence $(\mathcal{Q}_{\boldsymbol{\alpha}, \boldsymbol{\beta}}(n))_{n\geq 0}$. This is consistent with Theorems \ref{thm:N-integralityPolyn} and C since 
$\Xi_{\mathbf{r},\mathbf{t}}(b,\cdot)=\xi_{\boldsymbol{\alpha},\boldsymbol{\beta}}(a,\cdot)$ when $ba\equiv 1\bmod d_{\mathbf{r},\mathbf{t}}$. 
 However, the converse result does not always hold true, depending on the behaviour of  $\Xi_{\mathbf{r}, \mathbf{t}}(b,\cdot)$ for $b$ not coprime to $d_{\mathbf{r}, \mathbf{t}}$. 

For example, let us consider the vectors 
$$
\mathbf{r}:=((1,3),(2,3))\quad\textup{and}\quad\mathbf{t}:=((1,2),(1,1))\,.
$$
Then we have $\boldsymbol{\alpha}=(1/3,2/3)$ and $\boldsymbol{\beta}=(1/2,1)$. We deduce from  \eqref{eq: dn!} (or from Theorem C) that the hypergeometric sequence 
$$
\mathcal{Q}_{\boldsymbol{\alpha}, \boldsymbol{\beta}}(n)=\frac{(1/3)_n(2/3)_n}{(1/2)_n(1)_n} \quad\quad n\geq0\,
$$
is $N$-integral. However, we have $\Xi_{\mathbf{r},\mathbf{t}}(3,1/2)<0$ (see Section~\ref{sec:general} for more details)    
and thus, according to Theorem~\ref{thm:N-integralityPolyn}, the $q$-hypergeometric sequence 
$$
\mathcal{Q}_{\mathbf{r},\mathbf{t}}(q; n)=\frac{(q; q^3)_n(q^2;q^3)_n}{(q;q^2)_n(q;q)_n} \quad\quad n\geq 0\,
$$ 
fails to be $q$-integral.

\subsection{Organization of the paper}

In Section \ref{sec: choice}, we discuss our choice for the $q$-analog of the Pochhammer symbol $(\alpha)_n$ and show how to relate our results on 
$q$-hypergeometric sequences to basic hypergeometric series, as they are usually defined. 
In Section \ref{sec: val}, we extend the definition of Dwork maps to arbitrary integers $b$ and prove some of their basic properties.  
We also prove Theorem \ref{thm: valuationq-pocch}, as well as a formula for the cyclotomic valuation of $q$-hypergeometric terms. 
The latter is given in terms of certain step functions $\Delta_b^{\mathbf{r},\mathbf{t}}$, which are introduced in this section.  
In Section \ref{sec:CriterionIntegrality}, we deduce a first criterion for the $q$-integrality of 
$q$-hypergeometric sequences, which depends on the behaviour of $\Delta_b^{\mathbf{r},\mathbf{t}}$ for all but finitely many integers $b$. 
We also discuss the extension of this result to negative arguments $n$.  
These first criteria for $q$-integrality are not very satisfactory because they imply checking certain properties of an infinite number of step functions. 
We fill this gap in Section \ref{sec: effi}, where we introduce the finitely many step functions 
$\Xi_{\mathbf{r},\mathbf{t}}(b,\cdot)$, $b\in\{1,\ldots,d_{\mathbf{r},\mathbf{t}}\}$, and prove Theorem \ref{thm:N-integralityPolyn}.  
Finally, we provide some illustrations of Theorem \ref{thm:N-integralityPolyn} in Section \ref{sec: ex}.

\section{Choices for the $q$-analogs of Pochhammer symbols and hypergeometric functions}\label{sec: choice}

The notion of $q$-analog is loosely defined: for $a(q)$ to be a $q$-analog of a term $a$, one only requires that $a(q)$ tends to $a$ as $q$ tends to $1$. 
While everyone agrees with the definition of $[n]_q$ and $[n]!_q$, this requires a fair amount of choice for more general expressions. 
Depending on the nature of the properties one wishes to study, one may have to make one choice rather than another. 
In this section, we discuss in more detail our own choices for the $q$-analogs of Pochhammer symbols and hypergeometric series, as well as how 
our results translate when considering other natural $q$-analogs.

\subsection{Cyclotomic valuations and $q$-valuation}\label{subsec: cyclo}

We recall that, for every positive integer $b$, $\phi_b(q)\in\mathbb Z[q]$ stands for the $b$th cyclotomic polynomial. It is well-known that $\phi_b(q)$ is irreducible over 
$\mathbb Z[q]$. If $R$ and $S$ belong to 
$\mathbb{Z}[q]\setminus\{0\}$, 
then we let $v_{\phi_b}(R)$ denote the ${\phi_b}$-valuation of $R$, that is the largest non-negative integer $\nu$ such that $\phi_b(q)^\nu$ divides $R$. 
We also set $v_{\phi_b}(0):=+\infty$. 
The $\phi_b$-valuation extends naturally to $\mathbb Q(q)$ by setting $v_{\phi_b}(R/S):=v_{\phi_b}(R)-v_{\phi_b}(S)$. 

We also let $v_q$ denote the valuation of $\mathbb Q(q)$ which is associated with the irreducible polynomial $q$ in the same way.

\subsection{$q$-Analogs of Pochhammer symbols}

We explain now why we prefer to choose 
\begin{equation}\label{eq: ochoice}
\frac{(q^r;q^s)_n}{(1-q^s)^n}\in \mathbb Q(q)\,
\end{equation}
as $q$-analog of the Pochammer symbol $(\alpha)_n$,  $\alpha=r/s$, instead of the more standard 
\begin{equation}\label{eq: schoice}
\frac{(q^\alpha;q)_n}{(1-q)^n}\in \mathbb Q(q^{1/s})\,.
\end{equation}
There are three main reasons for our preference. The first one, which was already mentioned in the introduction,  
is that we find it more natural to work in the  field $\mathbb Q(q)$ instead of working in the field $\cup_{s\geq 1}\mathbb Q(q^{1/s})$ and dealing with non-integer powers of $q$. 
The second one is that it offers more flexibility. For example,  
$$
\frac{(q;q^2)_n}{(1-q^2)^n}\,,\quad \frac{(q^3;q^6)_n}{(1-q^6)^n} \,,\quad \text{and } \quad \frac{(q^{-1};q^{-2})_n}{(1-q^{-2})^n} 
$$
provide three different $q$-analogs of $(1/2)_n$. 
The third one comes from the useful 
Equality~\eqref{eq: dn!}, which we recall here for the reader's convenience: 
\begin{equation}\label{eq: dn!bis}
(dn)!=d^{dn}\left(\frac{1}{d}\right)_n\left(\frac{2}{d}\right)_n\cdots\left(\frac{d-1}{d}\right)_n(1)_n\,.
\end{equation}
With the choice of $(q^\alpha;q)_n/(1-q)^n$, we do not obtain a nice $q$-deformation of \eqref{eq: dn!bis}.  
Indeed, take for instance $d=2$, so that  
$$(2n)!=4^n(1/2)_n(1)_n\,.$$
 The $q$-analog of the left-hand side of \eqref{eq: dn!bis} is  
 $$
 \frac{(q;q)_{2n}}{(1-q)^{2n}}=\frac{(q;q^2)_{n}(q^2;q^2)_{n}}{(1-q)^{2n}}=(-q^{1/2};q)_{n}(-q;q)_{n}\frac{(q^{1/2};q)_{n}}{(1-q)^{n}}\frac{(q;q)_n}{(1-q)^{n}}\,,
 $$
therefore introducing minus signs in $q$-Pochhammer symbols. In contrast, the choice $(q^r;q^s)_n/(1-q^s)^n$ ensures the following nice $q$-deformation of \eqref{eq: dn!bis}: 
$$
[dn]!_q=\prod_{i=1}^{dn}\frac{1-q^i}{1-q}=\left(\frac{1-q^d}{1-q}\right)^{dn}\prod_{i=1}^{d}\frac{(q^i;q^d)_n}{(1-q^d)^n}\,\cdot
$$

\begin{rem}\label{rem: iso}\emph{ Let $d$ be a positive integer. Since $q$ is transcendental over $\mathbb{Q}$, 
there is an isomorphism of $\mathbb{Z}$-modules given by
$$
\begin{array}{rlll}
\varphi: & \mathbb{Z}[q^{1/d}] & \longrightarrow & \mathbb{Z}[q]\\
& P(q^{1/d}) & \longmapsto & P(q)
\end{array}.
$$
In particular, $\mathbb{Z}[q^{1/d}]$ is a Euclidean ring whose irreducible elements are of the form $P(q^{1/d})$ where $P(q)$ is an irreducible polynomial in $\mathbb{Z}[q]$. The isomorphism $\varphi$ extends to an isomorphism between the rings of Laurent polynomials $\mathbb{Z}[q^{-1/d},q^{1/d}]$ and $\mathbb{Z}[q^{-1},q]$, as well as 
between the  fields $\mathbb{Q}(q^{1/d})$ and $\mathbb{Q}(q)$. In particular, 
if we let $v_{b,s}$ denote the valuation in $ \mathbb Q(q^{1/s})$ associated with the irreducible polynomial 
$\phi_b(q^{1/s})\in\mathbb Z[q^{1/s}]$ and if we take $\alpha=r/s$, then we obtain that }
$$v_{b,s}((q^\alpha;q)_n/(1-q)^n)=v_{\phi_b}((q^r;q^s)_n/(1-q^s)^n)\,.$$ 
\emph{This shows that there is no loss of generality when choosing \eqref{eq: ochoice} as  
$q$-analog of $(\alpha)_n$.  }
\end{rem}

\subsection{$q$-Analogs of generalized hypergeometric series}

Let us first recall the standard notation for hypergeometric series (with rational parameters). With two vectors $\boldsymbol{\alpha}=(\alpha_1,\dots,\alpha_v)$ and 
$\boldsymbol{\beta}=(\beta_1,\dots,\beta_w)$ whose coordinates belong to $\mathbb{Q}\setminus\mathbb{Z}_{\leq 0}$, we associate the generalized hypergeometric series 
defined by 
$$
{}_{v}F_w\left(\begin{array}{cccc}\alpha_1,&\dots&,\alpha_v\\ \beta_1,&\dots&,\beta_w&\end{array}\Bigg|\,x\right):=\sum_{n=0}^\infty\frac{(\alpha_1)_n\cdots(\alpha_v)_n}{(\beta_1)_n\cdots(\beta_w)_n n!}x^n \,,
$$
while we usually prefer to work with its companion power series 
$$
F_{\boldsymbol{\alpha},\boldsymbol{\beta}}(x):= 
{}_{v+1}F_w\left(\begin{array}{ccccc}\alpha_1,&\dots&,\alpha_v,&1\\ \beta_1,&\dots&,\beta_w&\end{array}\Bigg|\,x\right)
=\sum_{n=0}^\infty\frac{(\alpha_1)_n\cdots(\alpha_v)_n}{(\beta_1)_n\cdots(\beta_w)_n}x^n\,.
$$

The \emph{basic hypergeometric series} is defined as 
\begin{equation*}
{}_{v}\phi_w\left(\begin{array}{cccc}q^{\alpha_1},&\dots&,q^{\alpha_v}\\ q^{\beta_1},&\dots&,q^{\beta_w}&\end{array}\Bigg|\,q;x\right):=\sum_{n=0}^\infty\frac{(q^{\alpha_1};q)_n\cdots(q^{\alpha_v};q)_n}{(q^{\beta_1};q)_n\cdots(q^{\beta_w};q)_n(q;q)_n}\left((-1)^nq^{\binom{n}{2}}\right)^{1+w-v}x^n\,.
\end{equation*} 
It is a generalization of the classical  ${}_{2}\phi_1$ introduced by Heine \cite{He} and the most standard $q$-analog of the hypergeometric series 
${}_{v}F_w$ (see, for instance, the monograph~\cite{GR} for more  on this topic). 
In fact, it is a $q$-analog up to renormalization by a factor $(q-1)^{(w-v)n}$, that is 
$$
\underset{q\rightarrow 1}{\lim}\,{}_{v+1}\phi_w\left(\begin{array}{cccc}q^{\alpha_1},&\dots&,q^{\alpha_v},&q\\ q^{\beta_1},&\dots&,q^{\beta_w}&\end{array}\Bigg|\,q;(q-1)^{w-v}x\right)=F_{\boldsymbol{\alpha},\boldsymbol{\beta}}(x)\,.
$$
Hence a first $q$-analog of $F_{\boldsymbol{\alpha},\boldsymbol{\beta}}(x)$ is given by 
\begin{align}\label{eq: F1}
F^{(1)}_{\boldsymbol{\alpha},\boldsymbol{\beta}}(q;x) :&= {}_{v+1}\phi_w\left(\begin{array}{cccc}q^{\alpha_1},&\dots&,q^{\alpha_v},&q\\ q^{\beta_1},&\dots&,q^{\beta_w}&\end{array}\Bigg|\,q; (q-1)^{w-v}x\right) \\
\nonumber&\displaystyle=\sum_{n=0}^\infty\frac{(q^{\alpha_1};q)_n\cdots(q^{\alpha_v};q)_n}{(q^{\beta_1};q)_n\cdots(q^{\beta_w};q)_n}\cdot (1-q)^{(w-v)n}q^{(w-v)\binom{n}{2}}x^n  \,.
\end{align}

Now, choosing $(q^\alpha;q)_n/(1-q)^n$ as $q$-analog of the Pochhammer symbol $(\alpha)_n$, we obtain another natural $q$-analog of 
$F_{\boldsymbol{\alpha},\boldsymbol{\beta}}(x)$, namely
\begin{equation}\label{eq: F2}
F^{(2)}_{\boldsymbol{\alpha},\boldsymbol{\beta}}(q;x):=\sum_{n=0}^\infty\frac{(q^{\alpha_1};q)_n\cdots(q^{\alpha_v};q)_n}{(q^{\beta_1};q)_n\cdots(q^{\beta_w};q)_n}
\cdot (1-q)^{(w-v)n}x^n \,.
\end{equation}
The two definitions only differ by the factor $q^{(w-v)\binom{n}{2}}$. In particular, they coincide when $v=w$.  

Finally, choosing $(q^r;q^s)_n/(1-q^s)^n$ as $q$-analog of the Pochhammer symbol $(\alpha)_n$, $\alpha=r/s$, we obtain a third natural $q$-analog of 
$F_{\boldsymbol{\alpha},\boldsymbol{\beta}}(x)$, namely
\begin{equation}\label{eq: F3}
F_{\boldsymbol{r},\boldsymbol{t}}(q;x):=\sum_{n=0}^\infty\frac{(q^{r_1};q^{s_1})_n\cdots(q^{r_v};q^{s_v})_n}{(q^{t_1};q^{u_1})_n\cdots(q^{t_w};q^{u_w})_n}\cdot
  \left(\frac{(1-q^{u_1})\cdots(1-q^{u_w})}{(1-q^{s_1})\cdots(1-q^{s_v})}\right)^n\; x^n,
\end{equation}
where $\boldsymbol{r}=((r_1,s_1),\ldots,(r_v,s_v))$, $\boldsymbol{t}=((t_1,u_1),\ldots,(t_w,u_w))$, and $r_i/s_i=\alpha_i$ and $t_j/u_j=\beta_j$ for all $i$ and $j$.

\medskip

Thus, we have three different natural $q$-analogs of the generalized hypergeometric series $F_{\boldsymbol{\alpha},\boldsymbol{\beta}}(x)$.  
We observe that both $F^{(1)}_{\boldsymbol{\alpha},\boldsymbol{\beta}}(q;x)$ and $F^{(2)}_{\boldsymbol{\alpha},\boldsymbol{\beta}}(q;x)$ 
have coefficients in $\mathbb{Q}(q^{1/d})$, where $d=d_{{\boldsymbol{\alpha},\boldsymbol{\beta}}}$ is the least common multiple of the denominators of the rational numbers 
$\alpha_i$ and $\beta_j$.  
In contrast, $F_{\boldsymbol{r},\boldsymbol{t}}(q;x)$ has coefficients in $\mathbb Q(q)$ and there exist infinitely many 
vectors $\boldsymbol{r}$ and $\boldsymbol{t}$ such that 
$$
\lim_{q\to 1}F_{\boldsymbol{r},\boldsymbol{t}}(q;x) = F_{\boldsymbol{\alpha},\boldsymbol{\beta}}(x)\,.
$$
Indeed, if $\boldsymbol{r}=((r_1,s_1),\ldots,(r_v,s_v))$ and $\boldsymbol{t}=((t_1,u_1),\ldots,(t_w,u_w))$ is such a pair of vectors, then for each  
pair $(a,b)$ occurring either in $\boldsymbol{r}$ or in  $\boldsymbol{t}$, we can choose a non-zero integer $k$ and replace $(a,b)$ by $(ka,kb)$.

\subsection{$q$-Integrality and $q^{1/d}$-integrality for basic hypergeometric series}

A power series $F(q;x)\in 1+x\mathbb Q(q)[[x]]$ is said to be $q$-integral if the sequence formed by its coefficients is $q$-integral, or, in other words, 
if there exists $C(q)\in\mathbb Z[q]\setminus\{0\}$ such that $F(q;C(q)x)\in\mathbb Z[q][[x]]$. 

Similarly, we say that a power series 
$F(q;x)\in 1+x\mathbb Q(q^{1/d})[[x]]$ is $q^{1/d}$-integral if there exists $C(q)\in\mathbb Z[q^{1/d}]\setminus\{0\}$ such that 
$F(q;C(q)x)\in\mathbb Z[q^{1/d}][[x]]$. According to Remark \ref{rem: iso}, $F(q;x)$ is $q^{1/d}$-integral if and only if  
$F(q^d;x)$ is $q$-integral.

Now, we show how Theorem \ref{thm:N-integralityPolyn}  can be used to study the $q^{1/d}$-integrality of $F^{(1)}_{\boldsymbol{\alpha},\boldsymbol{\beta}}(q;x)$ and 
$F^{(2)}_{\boldsymbol{\alpha},\boldsymbol{\beta}}(q;x)$, as well as the $q$-integrality of $F_{\boldsymbol{r},\boldsymbol{t}}(q;x)$.  
Recall that  
\begin{equation*}
F^{(1)}_{\boldsymbol{\alpha},\boldsymbol{\beta}}(q^d;x)=\sum_{n=0}^\infty\frac{(q^{d\alpha_1};q^d)_n\cdots(q^{d\alpha_v};q^d)_n}{(q^{d\beta_1};q^d)_n\cdots(q^{d\beta_w};q^d)_n}\cdot (1-q^d)^{(w-v)n}q^{d(w-v)\binom{n}{2}}x^n 
\end{equation*}
and 
\begin{equation*}
F^{(2)}_{\boldsymbol{\alpha},\boldsymbol{\beta}}(q^d;x)=\sum_{n=0}^\infty\frac{(q^{d\alpha_1};q^d)_n\cdots(q^{d\alpha_v};q^d)_n}{(q^{d\beta_1};q^d)_n\cdots(q^{d\beta_w};q^d)_n}\cdot (1-q^d)^{(w-v)n}x^n \,.
\end{equation*}
Setting $\boldsymbol{r}:=((d\alpha_1,d),\ldots,(d\alpha_v,d))$ and $\boldsymbol{t}:=((d\beta_1,d),\ldots,(d\beta_w,d))$, we obtain that 
\begin{align*}
F^{(1)}_{\boldsymbol{\alpha},\boldsymbol{\beta}}(q^d;x) &= \sum_{n=0}^\infty  \mathcal{Q}_{\mathbf{r},\mathbf{t}}(q; n)(1-q^d)^{(w-v)n}q^{d(w-v)\binom{n}{2}}x^n\,, \\
F^{(2)}_{\boldsymbol{\alpha},\boldsymbol{\beta}}(q^d;x) &= \sum_{n=0}^\infty  \mathcal{Q}_{\mathbf{r},\mathbf{t}}(q; n)(1-q^d)^{(w-v)n} x^n \,,\\
F_{\boldsymbol{r},\boldsymbol{t}}(q;x)&= \sum_{n=0}^\infty  \mathcal{Q}_{\mathbf{r},\mathbf{t}}(q; n)  \left(\frac{(1-q^{u_1})\cdots(1-q^{u_w})}{(1-q^{s_1})\cdots(1-q^{s_v})}\right)^n  x^n\,.
\end{align*}
Note that for $q$-integrality, we can omit factors of the form $h(q)^n$ with $h(q)\in\mathbb Q(q)$ such as 
$$
(1-q^d)^{(w-v)n} \quad \text{ and }\quad   \left(\frac{(1-q^{u_1})\cdots(1-q^{u_w})}{(1-q^{s_1})\cdots(1-q^{s_v})}\right)^n   \,.
$$  
It follows  that 
$F^{(1)}_{\boldsymbol{\alpha},\boldsymbol{\beta}}(q;x)$ is $q^{1/d}$-integral if and only if $\mathcal{Q}_{\mathbf{r},\mathbf{t}}(q; n)$ is $q$-integral and 
$$
v_q\left( q^{d(w-v)\binom{n}{2}}\right)\geq an\, \quad\quad \forall n\geq 0\,,
$$
for some integer $a$, that is 
$$
F^{(1)}_{\boldsymbol{\alpha},\boldsymbol{\beta}}(q;x) \mbox{ is $q^{1/d}$-integral} \iff (\mathcal{Q}_{\mathbf{r},\mathbf{t}}(q; n))_{n\geq 0}  \mbox{ is $q$-integral and $w\geq v$.}
$$
We also deduce that 
$$F^{(2)}_{\boldsymbol{\alpha},\boldsymbol{\beta}}(q;x) \mbox{ is $q^{1/d}$-integral} \iff  F_{\boldsymbol{r},\boldsymbol{t}}(q;x) \mbox{ is $q$-integral}  \iff 
(\mathcal{Q}_{\mathbf{r},\mathbf{t}}(q; n))_{n\geq 0} \mbox{ is $q$-integral.}
$$

\subsection{Irreducible factors of $q$-Pochhammer symbols and $q$-integrality of $q$-hypergeometric sequences}\label{subsec: pre}

Throughout  this paper, we work only with ratios of products of terms of the form $(q^r;q^s)_n$ and $(1-q^s)$, where $r$ and $s$ are integers, $s\not=0$, and 
$n$ is an integer. 

Let us first recall that, for every positive integer $a$, we have  
\begin{equation}\label{eq: qa}
1-q^{a}= - \prod_{b\mid a} \phi_b(q) 
\quad 
\text{ and } 
\quad
1-q^{-a}=-q^{-a}(1-q^a) = q^{-a} \prod_{b\mid a} \phi_b(q) \,.
\end{equation}
Let $n\in \mathbb Z$. 
It follows that any ratio of products of terms of the form $(q^r;q^s)_n$ and $(1-q^s)$, where $r$ and $s$ are integers and $s\not=0$, has a unique decomposition of the form
\begin{equation}\label{eq: decomposition}
\pm q^{v_{q,n}}  \prod_{b=1}^{\infty} \phi_b(q)^{v_{b,n}} \,,
\end{equation}
where $v_{q,n},v_{1,n},\ldots$ are integers and $v_{b,n}=0$ for all but finitely many positive integers $b$. 
The integer $v_{q,n}$ is the $q$-valuation of this ratio and, for every $b\geq 1$, the integer $v_{b,n}$ is its $\phi_b$-valuation. 

\begin{rem}\label{rem: qq}\emph{
A term of the form \eqref{eq: decomposition} belongs to $\mathbb Z[q]$ if and only if the integers $v_{q,n},v_{1,n},v_{2,n},\ldots$ are all non-negative. When only the integers 
$v_{1,n},v_{2,n},\ldots$ are non-negative, then it belongs to $\mathbb Z[q^{-1},q]$.  }
\end{rem}

\subsubsection{The $q$-valuation of $q$-Pochhammer symbols}\label{subsec: qval}

Let $n$ be a positive integer and $r$ and $s$ be two integers, $s\not=0$. Let us assume that $(q^r;q^s)_n$ is well-defined and non-zero. 
We let $\mathcal N:=\{i\in\{0,\ldots,n-1\} : r+is <0\}$. Then we have
$$
v_q((q^r;q^s)_n) = \sum_{i\in\mathcal N} (r+is) \,.
$$
We deduce the following results. 
\begin{itemize}
\item[{\rm (i)}] When $r$ and $s$ are non-negative, then $v_q((q^r;q^s)_n)=0$. 

\item[{\rm (ii)}]  When $r$ is negative and $s$ positive, then the sequence $(v_q((q^r;q^s)_n))_{n\geq 0}$ remains bounded. 

\item[{\rm (iii)}]  When $s$ is negative, then 
\begin{equation}\label{eq: vqp}
v_q((q^r;q^s)_n)\underset{n\to+\infty}{\sim} s{n\choose 2}\,.
\end{equation} 
\end{itemize}

Now, let $n$ be a negative integer. We can derive similar results from the expression 
$$
(q^r;q^s)_n=\frac{1}{(q^{r-s};q^{-s})_{-n} }\,\cdot
$$
In particular, we get that $(v_q((q^r;q^s)_n))_{n\leq 0}$ remains bounded if $s$ is negative, and 
\begin{equation}\label{eq: vqpbis}
v_q((q^r;q^s)_n)\underset{n\rightarrow -\infty}{\sim} s\binom{-n}{2}\,
\end{equation} 
if $s$ is positive. 

\subsubsection{Asymptotics for cyclotomic and $q$-valuations of $q$-hypergeometric terms}
Let us consider the  $q$-hypergeometric sequence 
$$
Q_{\mathbf{r},\mathbf{t}}(q;n)=\frac{(q^{r_1};q^{s_1})_n\cdots(q^{r_v};q^{s_v})_n}{(q^{t_1};q^{u_1})_n\cdots(q^{t_w};q^{u_w})_n} \quad\quad n\geq 0\,,
$$
which we assume to be well-defined and not eventually zero. 
We first infer from  \eqref{eq: qa} that 
\begin{equation}\label{eq: asymp1}
v_{\phi_b}(Q_{\mathbf{r},\mathbf{t}}(q;n))=O(n)\,,
\end{equation}
for every positive integer $b$. 
Let $\mathcal N_1:=\{i \in\{1,\ldots,v\} : s_i<0\}$, $\mathcal N_2:=\{j \in\{1,\ldots,w\} : u_j<0\}$, and $s=\sum_{i\in\mathcal N_1} s_i - \sum_{j\in\mathcal N_2} u_j$. 
Using (i)--(iii) above, we deduce that 
\begin{equation}\label{eq: asymp2}
v_q(Q_{\mathbf{r},\mathbf{t}}(q;n))=  s{n\choose 2} + O(n)\,.
\end{equation}
It follows from \eqref{eq: decomposition}, Remark \ref{rem: qq}, and Equalities \eqref{eq: asymp1} and \eqref{eq: asymp2},  that 
\begin{equation}\label{eq: iff1}
(Q_{\mathbf{r},\mathbf{t}}(q;n))_{n\geq 0} \mbox{ is $q$-integral} \iff s\geq 0 \mbox { and } v_{\phi_b}(Q_{\mathbf{r},\mathbf{t}}(q;n))\geq 0\quad  \forall b\gg 1 
\end{equation}
and 
\begin{equation}\label{eq: iff2}
\exists C(q)\in\mathbb Z[q]\setminus\{0\} \mid \forall n \geq 0\,, \;C(q)^nQ_{\mathbf{r},\mathbf{t}}(q;n)\in\mathbb Z[q^{-1},q] 
\iff v_{\phi_b}(Q_{\mathbf{r},\mathbf{t}}(q;n))\geq 0   \quad  \forall b\gg 1 \,.
\end{equation}
The discussion of Section \ref{subsec: qval} also shows how to derive similar results for $q$-hypergeometric sequences of the form $(Q_{\mathbf{r},\mathbf{t}}(q;n))_{n\leq 0}$.

\section{The cyclotomic valuation of basic hypergeometric terms}\label{sec: val}

In this section, we introduce some generalizations of Dwork maps and Landau functions. They provide suitable tools to respectively 
compute the $\phi_b$-valuation of the $q$-Pochhammer symbol $(q^r;q^s)_n$ and of $q$-hypergeometric terms.   
Our approach takes its source in the works of Dwork \cite{Dwork},  Katz \cite{Katz}, and Christol \cite{Christol}. 
Precise formulas and properties for the $p$-adic valuation of Pochhammer symbols $(r/s)_n$ were given by Delaygue, Rivoal, and Roques \cite{DRR}  
in order to prove the integrality of coefficients of some mirror maps.  In this section, we  generalize those formulas, yielding finer results in analogy with 
\eqref{eq:LegendrePhib}. We also show that our results extend naturally to negative arguments $n$, and we derive new formulas that could be used to 
simplify  the proofs in \cite[Chapter 5]{DRR} considerably.

\subsection{A generalization of Dwork maps}\label{sec: dwork}

We first extend the definition of the Dwork map $D_p$, replacing the prime number $p$ by an arbitrary positive integer $b$. 
 
For every rational number $\alpha$, we let $d(\alpha)$ denote the exact positive denominator of $\alpha$, that is 
$$
d(\alpha):= \min \{d\in \mathbb N : \alpha = a/d, a\in \mathbb Z\} \,.
$$ 
Hence $d(\alpha)=1$ if and only if $\alpha$ is an integer. We also let $n(\alpha)$ denote the numerator of $\alpha$, that is the unique integer 
such that $\alpha=n(\alpha)/d(\alpha)$. 
For every positive integer $b$, we consider the multiplicative set $S_b:=\{k\in\mathbb{Z}:\,\gcd(k,b)=1\}$. We let $S_b^{-1}\mathbb{Z}\subset \mathbb Q$ denote the localization of $\mathbb Z$ by $S_b$, that is the ring formed by the rational numbers 
$\alpha$ such that $d(\alpha)$ belongs to $S_b$. 

\begin{propdef}\label{propdef: db} 
Let $b$ be a positive integer and $\alpha$ be in $S_b^{-1}\mathbb{Z}$. There is a unique element $D_b(\alpha)$ of $S_b^{-1}\mathbb{Z}$ such that
\begin{equation}\label{db}
bD_b(\alpha)-\alpha\in\{0,\dots,b-1\} \,.
\end{equation}
Furthermore, the formula 
\begin{equation}\label{eq: Db formula}
D_b(\alpha)=a\alpha+\left\lfloor \frac{\alpha-1}{b}-a\alpha\right\rfloor +1
\end{equation}
holds true for every integer $a$ satisfying $ab\equiv 1\mod d(\alpha)$. 
\end{propdef}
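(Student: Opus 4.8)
The plan is to establish the existence–uniqueness assertion first, and then to deduce the explicit formula \eqref{eq: Db formula} from it by checking that the proposed right-hand side satisfies the characterizing property \eqref{db}.

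For existence and uniqueness, I would write $\alpha=n(\alpha)/d(\alpha)$ and look for $D_b(\alpha)$ in the form $D=(\alpha+k)/b$, where $k:=bD-\alpha$ is to lie in $\{0,\dots,b-1\}$. The only constraint beyond this range condition is that $D$ must belong to $S_b^{-1}\mathbb{Z}$. Since $\gcd(d(\alpha),b)=1$, writing $D=\bigl(n(\alpha)+k\,d(\alpha)\bigr)/\bigl(b\,d(\alpha)\bigr)$ shows that $D\in S_b^{-1}\mathbb{Z}$ if and only if $b\mid n(\alpha)+k\,d(\alpha)$, that is $k\,d(\alpha)\equiv-n(\alpha)\pmod b$. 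Because $d(\alpha)$ is invertible modulo $b$, this congruence has exactly one solution $k$ in $\{0,\dots,b-1\}$, which simultaneously yields the existence and the uniqueness of $D_b(\alpha)$.

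For the formula, I would set $E:=a\alpha+\lfloor(\alpha-1)/b-a\alpha\rfloor+1$ and verify that $E$ satisfies the two defining properties of $D_b(\alpha)$, so that $E=D_b(\alpha)$ by the uniqueness just proved. That $E\in S_b^{-1}\mathbb{Z}$ is immediate, since $E$ is $a\alpha$ plus an integer and $d(a\alpha)$ divides $d(\alpha)$, hence is coprime to $b$. The substantive point is to check $bE-\alpha\in\{0,\dots,b-1\}$. This is where the hypothesis $ab\equiv1\pmod{d(\alpha)}$ is used crucially: it forces $(ab-1)\alpha=\frac{ab-1}{d(\alpha)}\,n(\alpha)$ to be an integer, so that $bE-\alpha=(ab-1)\alpha+b\lfloor(\alpha-1)/b-a\alpha\rfloor+b$ is an integer. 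To bound it I would apply the defining inequalities of the floor, namely $m\leq(\alpha-1)/b-a\alpha<m+1$ with $m:=\lfloor(\alpha-1)/b-a\alpha\rfloor$, multiply through by $b>0$, and rearrange: the lower floor inequality gives $bE-\alpha\leq b-1$, while the upper one gives $bE-\alpha>-1$, whence $bE-\alpha\geq0$ because $bE-\alpha$ is an integer. This places $bE-\alpha$ in $\{0,\dots,b-1\}$ and completes the identification $E=D_b(\alpha)$.

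The main obstacle is conceptual rather than computational: the key is to recognize that the localization requirement ``$D\in S_b^{-1}\mathbb{Z}$'' is equivalent to the single congruence $k\,d(\alpha)\equiv-n(\alpha)\pmod b$, which is exactly what forces uniqueness and pins down the correct residue $k$; and, for the formula, to spot that the precise role of $ab\equiv1\pmod{d(\alpha)}$ is to guarantee the integrality of $(ab-1)\alpha$, without which the floor-based estimate would not produce an honest element of $\{0,\dots,b-1\}$. Once these two observations are in place, the remaining floor inequalities are routine.
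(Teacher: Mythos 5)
Your proof is correct. The verification of Formula \eqref{eq: Db formula} is essentially the paper's own argument: the authors likewise observe that $ab\equiv 1\bmod d(\alpha)$ makes $ba\alpha-\alpha$ an integer, and then squeeze $\theta:=a\alpha+\lfloor(\alpha-1)/b-a\alpha\rfloor+1$ between $(\alpha-1)/b$ and $(\alpha-1)/b+1$ to get $-1<b\theta-\alpha\leq b-1$, concluding by integrality. Where you genuinely diverge is in the existence--uniqueness part. The paper proves uniqueness by contradiction: two distinct solutions $\theta_1>\theta_2$ in $S_b^{-1}\mathbb{Z}$ would give $b(\theta_1-\theta_2)\in\{1,\dots,b-1\}$, forcing $\theta_1-\theta_2\notin S_b^{-1}\mathbb{Z}$ and contradicting the fact that $S_b^{-1}\mathbb{Z}$ is a ring; existence is then obtained only as a by-product of verifying the formula. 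You instead parametrize all candidates as $D=(\alpha+k)/b$ with $k\in\{0,\dots,b-1\}$ and show that membership in $S_b^{-1}\mathbb{Z}$ is equivalent to the single congruence $k\,d(\alpha)\equiv -n(\alpha)\pmod b$, which has exactly one solution since $\gcd(d(\alpha),b)=1$. This buys you existence and uniqueness simultaneously, independently of the choice of $a$, and makes the residue $k=bD_b(\alpha)-\alpha$ explicit; the small price is a slight redundancy, since existence ends up being established twice (once via the congruence, once via the formula). Both routes are sound, and your equivalence ``$D\in S_b^{-1}\mathbb{Z}\iff b\mid n(\alpha)+k\,d(\alpha)$'' does hold in both directions, the nontrivial direction following from the fact that an element of $S_b^{-1}\mathbb{Z}$ which becomes an integer after multiplication by $b$ is already an integer.
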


\begin{rem}\label{rem: dmap}\emph{
Note that the map $D_b$ is only defined from $S_b^{-1}\mathbb{Z}$ into itself. 
When $b=1$, $S_b^{-1}\mathbb{Z}=\mathbb{Q}$ and $D_1$ is just the identity map of $\mathbb{Q}$. 
In fact, not only $D_b(\alpha)\in S_b^{-1}\mathbb{Z}$, but, more precisely, Equation \eqref{eq: Db formula} shows that $D_b(\alpha)\in\frac{1}{d(\alpha)}\mathbb Z$. }
\end{rem}

\begin{proof}
Let us first assume by contradiction that $D_b(\alpha)$ is not unique, and let $\theta_1>\theta_2$ be two distinct elements of $S_b^{-1}\mathbb{Z}$ satisfying Equation \eqref{db}. 
It would yield $b\geq 2$ and $b(\theta_1-\theta_2)\in\{1,\dots,b-1\}$. Therefore we would have $\theta_1-\theta_2\notin S^{-1}_b\mathbb{Z}$, which would provide a contradiction 
since $S^{-1}_b\mathbb{Z}$ is a ring. Hence $D_b(\alpha)$ is unique.
\medskip

Now we prove the existence of $D_b(\alpha)$ while establishing \eqref{eq: Db formula}. Since, by assumption, $\alpha$ belongs to $S_b^{-1}\mathbb{Z}$, we have 
$\gcd(d(\alpha),b)=1$, and integers $a$ such that $ab\equiv 1\mod d(\alpha)$ do exist. Let $a$ be such an integer and  set
$$
\theta := a\alpha+\left\lfloor \frac{\alpha-1}{b}-a\alpha\right\rfloor +1 \,.
$$
Observe that $\theta\in S_b^{-1}\mathbb{Z}$. Since $ba\equiv 1\mod d(\alpha)$,  $ba\alpha-\alpha$ is an integer and $b\theta-\alpha$ belongs to $\mathbb{Z}$. Furthermore, we have
$$
\frac{\alpha-1}{b}<\theta\leq\frac{\alpha-1}{b}+1 \,,
$$
which yields
$$
-1<b\theta-\alpha\leq b-1\,.
$$
Hence $D_b(\alpha)=\theta$, as expected. 
\end{proof}

Following Christol \cite{Christol}, we introduce some notation which allows us to simplify the expression of $D_b(\alpha)$ when $b$ is large enough. 
For every real number $x$, we let $\{x\}$ denote its fractional part and we set
$$
\langle x\rangle:=\left\{\begin{array}{cl}
\{x\} &\textup{if $x\notin\mathbb{Z}$}\,,\\
1 & \textup{otherwise}.
\end{array} \right.
$$
Hence $\langle x\rangle= 1-\{1-x\}$. For every rational number $\alpha$, we also define
$$
\mathfrak{n}_\alpha:=\left\{\begin{array}{cl}
n(\alpha) & \textup{if $\alpha\geq 0$} \,,\\
|n(\alpha)|+1 & \textup{otherwise}.
\end{array}\right.
$$

\begin{prop}\label{prop Db}
Let $b$ be a positive integer and $\alpha$ be in $S_b^{-1}\mathbb{Z}$. Let $a$ be an integer satisfying $ab\equiv 1\mod d(\alpha)$. Then we have the formula
$$
D_b(\alpha)=\langle a\alpha\rangle-\left\lfloor\langle a\alpha\rangle-\frac{\alpha}{b}\right\rfloor\,.
$$
Furthermore, if $b\geq \mathfrak{n}_\alpha$, then 
$$
D_b(\alpha)=\left\{\begin{array}{cl}
\langle a\alpha\rangle & \textup{if $\alpha\notin\mathbb{Z}_{\leq 0}$\,,}\\
0 & \textup{otherwise}.
\end{array}\right.
$$ 
\end{prop}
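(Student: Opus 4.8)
The plan is to deduce the first formula directly from the characterization of $D_b(\alpha)$ given in Proposition-definition~\ref{propdef: db}, namely that $D_b(\alpha)$ is the unique element of $S_b^{-1}\mathbb{Z}$ with $bD_b(\alpha)-\alpha\in\{0,\dots,b-1\}$. Set
$$
\theta:=\langle a\alpha\rangle-\left\lfloor\langle a\alpha\rangle-\frac{\alpha}{b}\right\rfloor\,.
$$
First I would check that $\theta\in S_b^{-1}\mathbb{Z}$: indeed $\langle a\alpha\rangle$ is either $1$ or the fractional part $\{a\alpha\}$, whose denominator divides $d(\alpha)$ and is therefore coprime to $b$, and subtracting an integer keeps us in $S_b^{-1}\mathbb{Z}$. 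It then remains to show $b\theta-\alpha\in\{0,\dots,b-1\}$. Writing $w:=\langle a\alpha\rangle-\alpha/b$, a one-line computation gives $b\theta-\alpha=bw-b\lfloor w\rfloor=b\{w\}\in[0,b)$, so the only real content is that this quantity is an integer.

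The crux, then, is the arithmetic fact that $bw=b\langle a\alpha\rangle-\alpha$ is an integer. Since $\langle a\alpha\rangle\equiv a\alpha\pmod{\mathbb Z}$, write $\langle a\alpha\rangle=a\alpha-k$ with $k\in\mathbb Z$, so that $b\langle a\alpha\rangle-\alpha=(ab-1)\alpha-bk$. The hypothesis $ab\equiv 1\pmod{d(\alpha)}$ means $d(\alpha)\mid ab-1$, whence $(ab-1)\alpha=\frac{ab-1}{d(\alpha)}\,n(\alpha)\in\mathbb Z$; thus $bw\in\mathbb Z$ and consequently $b\{w\}\in\mathbb Z\cap[0,b)=\{0,\dots,b-1\}$. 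By the uniqueness in Proposition-definition~\ref{propdef: db}, $\theta=D_b(\alpha)$, which is the first formula. (Alternatively, one can start from \eqref{eq: Db formula}, absorb the integer $a\alpha-\langle a\alpha\rangle$ into the floor, and then convert floors into ceilings via $-\lfloor -y\rfloor=\lceil y\rceil$; this route needs the same fact, here in the guise that $\{u\}\geq 1/b$ whenever $u:=\alpha/b-\langle a\alpha\rangle\notin\mathbb Z$.)

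For the second assertion I would feed the first formula together with the bound $b\geq\mathfrak{n}_\alpha$ into an estimate pinning down $\lfloor\langle a\alpha\rangle-\alpha/b\rfloor$. The relevant a priori bounds are $\langle a\alpha\rangle\in[\,1/d(\alpha),\,1\,]$ always, and $\langle a\alpha\rangle\leq 1-1/d(\alpha)$ when $a\alpha\notin\mathbb Z$ (note that $a\alpha\notin\mathbb Z$ precisely when $\alpha\notin\mathbb Z$, since $\gcd(a,d(\alpha))=1$). I would then split into three cases. If $\alpha\in\mathbb Z_{\leq 0}$, then $\langle a\alpha\rangle=1$ and $b>-\alpha$, so $1\leq 1-\alpha/b<2$, the floor equals $1$, and $D_b(\alpha)=0$. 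If $\alpha>0$, then $\langle a\alpha\rangle-\alpha/b<1$ (as $\langle a\alpha\rangle\leq 1$ and $\alpha/b>0$) and $\geq 0$ (using $\alpha/b=(n(\alpha)/b)/d(\alpha)\leq 1/d(\alpha)\leq\langle a\alpha\rangle$, where $n(\alpha)=\mathfrak{n}_\alpha\leq b$), so the floor is $0$ and $D_b(\alpha)=\langle a\alpha\rangle$. If $\alpha<0$ is not an integer, then $\alpha/b<0$ makes the quantity positive, while $\mathfrak{n}_\alpha=|n(\alpha)|+1\leq b$ gives $\alpha>-b/d(\alpha)$, hence $\langle a\alpha\rangle\leq 1-1/d(\alpha)<1+\alpha/b$, so again the floor is $0$ and $D_b(\alpha)=\langle a\alpha\rangle$.

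The single genuinely substantive step is the integrality of $b\langle a\alpha\rangle-\alpha$, which is exactly where the congruence $ab\equiv 1\pmod{d(\alpha)}$ enters; everything else is bookkeeping with floors and fractional parts. In the second part the only delicate point is keeping the definition of $\mathfrak{n}_\alpha$ (which differs for $\alpha\geq 0$ and $\alpha<0$) aligned with the correct strict and non-strict inequalities in each case, so that the floor lands on exactly $0$ or $1$.
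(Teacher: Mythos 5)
Your proposal is correct and follows essentially the same route as the paper: both establish the first formula by verifying that the candidate expression lies in $S_b^{-1}\mathbb{Z}$ and satisfies $b\theta-\alpha\in\{0,\dots,b-1\}$ (the key integrality of $b\langle a\alpha\rangle-\alpha$ coming from the congruence $ab\equiv 1\bmod d(\alpha)$), then invoke the uniqueness from Proposition-definition~\ref{propdef: db}, and both handle the second assertion by the same case analysis on the sign and integrality of $\alpha$ using the bounds $\langle a\alpha\rangle\in[1/d(\alpha),1]$ and $|\alpha/b|\leq 1/d(\alpha)$. The only differences are cosmetic (fractional-part bookkeeping versus the paper's explicit inequality chain).
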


It follows that, for a fixed rational number $\alpha$, and for all integers $b\geq \mathfrak{n}_\alpha$ coprime to $d(\alpha)$, $D_b(\alpha)$ only depends 
on the residue class of $b$ modulo $d(\alpha)$. 

\begin{rem}\emph{
When $b$ is  prime and $\alpha\notin\mathbb{Z}_{\leq 0}$, Lemma 23 in \cite{DRR} shows that $D_b(\alpha)=\langle a\alpha\rangle$ for $b\geq d(\alpha)(|\lfloor 1-\alpha\rfloor|+\langle\alpha\rangle)$. The condition $b\geq\mathfrak{n}_\alpha$ slightly improves on this bound.  When $\alpha>0$, it makes no difference because $\mathfrak{n}_\alpha=n(\alpha)$ which can be written $d(\alpha)\alpha=d(\alpha)(-\lfloor 1-\alpha\rfloor+\langle\alpha\rangle)$, with $\lfloor 1-\alpha\rfloor\leq 0$. But when $\alpha<0$, we have $\mathfrak{n}_\alpha=|n(\alpha)|+1$ which may improve on the previous bound. For example, even for $\alpha=-1/2$, one finds that $d(\alpha)(|\lfloor 1-\alpha\rfloor|+\langle\alpha\rangle)=3$ while $\mathfrak{n}_\alpha=2$.}
\end{rem}

\begin{proof}
Since, by assumption, $a$ does not divide $d(\alpha)$, there is an integer $k$ such that $\langle a\alpha\rangle=k/d(\alpha)$ and $k\equiv a n(\alpha)\mod d(\alpha)$. 
Hence $bk\equiv n(\alpha)\mod d(\alpha)$ and $b\langle a\alpha\rangle-\alpha$ is an integer. It follows that
$$
b\left(\langle a\alpha\rangle-\left\lfloor\langle a\alpha\rangle-\frac{\alpha}{b}\right\rfloor\right)-\alpha\in\mathbb{Z}\,.
$$
Furthermore, we have
$$
\langle a\alpha\rangle-\frac{\alpha}{b}-1<\left\lfloor\langle a\alpha\rangle-\frac{\alpha}{b}\right\rfloor\leq\langle a\alpha\rangle-\frac{\alpha}{b}\,,
$$
so that
$$
0\leq b\left(\langle a\alpha\rangle-\left\lfloor\langle a\alpha\rangle-\frac{\alpha}{b}\right\rfloor\right)-\alpha<b\,.
$$
This proves the expected formula for $D_b(\alpha)$ by uniqueness.

Now, let us assume that $b\geq\mathfrak{n}_\alpha$. Then we have $|\alpha/b|\leq 1/d(\alpha)$ and (even if $\alpha$ is an integer) 
$$
\frac{1}{d(\alpha)}\leq \langle a\alpha\rangle\leq 1\,.
$$
If $\alpha$ is positive, then it follows that 
\begin{equation}\label{eq:GoalProp}
\left\lfloor\langle a\alpha\rangle-\frac{\alpha}{b}\right\rfloor=0\,,
\end{equation}
that is $D_b(\alpha)=\langle a\alpha\rangle$. If $\alpha=0$, then $D_b(\alpha)=0$. If $\alpha$ is negative, then $\mathfrak{n}_\alpha=|n(\alpha)|+1$ 
and we obtain that $|\alpha/b|<1/d(\alpha)$. Hence, either $\alpha$ is an integer and
$$
D_b(\alpha)=1-\left\lfloor 1-\frac{\alpha}{b}\right\rfloor=0\,,
$$ 
or we have
$$
\frac{1}{d(\alpha)}\leq \langle a\alpha\rangle\leq \frac{d(\alpha)-1}{d(\alpha)} 
$$
and $D_b(\alpha)=\langle a\alpha\rangle$. In all cases, we obtain the expected result. 
\end{proof}

We end this section with a simple rule about composition of Dwork maps.

\begin{prop}\label{prop:Db compo}
Let $b$ and $c$ be two positive integers, and let $\alpha$ be in $S_{bc}^{-1}\mathbb{Z}$. Then we have 
$$
D_b(D_c(\alpha))=D_{bc}(\alpha)\,.
$$
In particular, we have $D_b^n=D_{b^n}$, and if $b^n\geq\mathfrak{n}_\alpha$ is congruent to $1$ modulo $d(\alpha)$ and $\alpha\not\in\mathbb{Z}_{\leq 0}$, then we have $D_b^n(\alpha)=\langle\alpha\rangle$.
\end{prop}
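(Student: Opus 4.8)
The plan is to prove the composition rule $D_b(D_c(\alpha))=D_{bc}(\alpha)$ directly from the characterizing property in Proposition-definition~\ref{propdef: db}, and then extract the iterated formula and the special value as easy corollaries. First I would check that the composite $D_b(D_c(\alpha))$ is even defined: since $\alpha\in S_{bc}^{-1}\mathbb Z$, we have $\gcd(d(\alpha),bc)=1$, so in particular $\gcd(d(\alpha),c)=1$ and $D_c(\alpha)$ makes sense; moreover by Remark~\ref{rem: dmap} we have $D_c(\alpha)\in\frac{1}{d(\alpha)}\mathbb Z$, so $d(D_c(\alpha))$ divides $d(\alpha)$ and is therefore coprime to $b$, which means $D_b(D_c(\alpha))$ is also defined and lies in $S_{bc}^{-1}\mathbb Z$.

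The core of the argument is a uniqueness argument. By Proposition-definition~\ref{propdef: db}, $D_{bc}(\alpha)$ is the unique element $\theta$ of $S_{bc}^{-1}\mathbb Z$ satisfying $bc\,\theta-\alpha\in\{0,\dots,bc-1\}$. So it suffices to verify that $\theta:=D_b(D_c(\alpha))$ belongs to $S_{bc}^{-1}\mathbb Z$ and satisfies this same two-sided inequality. Membership in $S_{bc}^{-1}\mathbb Z$ follows from $\theta\in S_b^{-1}\mathbb Z$ together with $d(\theta)\mid d(\alpha)$ (again via Remark~\ref{rem: dmap}) and $\gcd(d(\alpha),bc)=1$. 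For the inequality, write $\beta:=D_c(\alpha)$, so by definition $c\beta-\alpha\in\{0,\dots,c-1\}$ and $b\theta-\beta\in\{0,\dots,b-1\}$. Then
$$
bc\,\theta-\alpha=c(b\theta-\beta)+(c\beta-\alpha).
$$
The plan is to bound this: the first summand lies in $\{0,c,2c,\dots,(b-1)c\}$ and the second in $\{0,\dots,c-1\}$, so their sum ranges over $\{0,1,\dots,bc-1\}$ (every residue is hit exactly once, the usual mixed-radix decomposition). Hence $bc\,\theta-\alpha\in\{0,\dots,bc-1\}$, and by uniqueness $\theta=D_{bc}(\alpha)$.

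The iterated formula $D_b^n=D_{b^n}$ is then immediate by induction on $n$ using the composition rule with $c=b^{n-1}$ (taking $\alpha\in S_{b^n}^{-1}\mathbb Z$, which is what the hypotheses supply). Finally, for the special value: if $b^n\geq\mathfrak n_\alpha$, $b^n\equiv 1\bmod d(\alpha)$, and $\alpha\notin\mathbb Z_{\leq 0}$, I would apply the large-$b$ part of Proposition~\ref{prop Db} to the integer $b^n$ in the role of ``$b$'', choosing the auxiliary integer $a$ in that proposition to be $a=1$, which is legitimate precisely because $b^n\equiv 1\bmod d(\alpha)$ means $a=1$ satisfies $a\,b^n\equiv 1\bmod d(\alpha)$. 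Then Proposition~\ref{prop Db} gives $D_{b^n}(\alpha)=\langle 1\cdot\alpha\rangle=\langle\alpha\rangle$, and combining with $D_b^n=D_{b^n}$ yields $D_b^n(\alpha)=\langle\alpha\rangle$. I expect the main (though still mild) obstacle to be the careful bookkeeping of denominators to ensure each intermediate Dwork map is defined and lands in the correct localization; the inequality computation itself is a routine mixed-radix estimate.
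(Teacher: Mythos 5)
Your proposal is correct and follows essentially the same route as the paper: the identity $bc\,\theta-\alpha=c(b\theta-D_c(\alpha))+(cD_c(\alpha)-\alpha)$, the observation that this lies in $\{0,\dots,bc-1\}$, and the appeal to uniqueness are exactly the paper's argument, as are the induction for $D_b^n=D_{b^n}$ and the choice $a=1$ in Proposition~\ref{prop Db} for the final claim. Your additional bookkeeping of denominators via Remark~\ref{rem: dmap} is a harmless (and slightly more careful) elaboration of what the paper leaves implicit.
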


\begin{proof}
We have
$$
bcD_b(D_c(\alpha))-\alpha=c\big(bD_b(D_c(\alpha))-D_c(\alpha)\big)+cD_c(\alpha)-\alpha \,,
$$
which belongs to $\{0,\dots,bc-1\}$. Hence $D_b(D_c(\alpha))=D_{bc}(\alpha)$ by uniqueness. By induction, we get that $D_b^n=D_{b^n}$. By Proposition \ref{prop Db}, 
if $\alpha\notin\mathbb{Z}_{\leq 0}$ and $b^n\geq\mathfrak{n}_\alpha$ is congruent to $1$ modulo $d(\alpha)$, then $D_b^n(\alpha)=D_{b^n}(\alpha)=\langle \alpha\rangle$. Indeed, since $b^n\equiv 1\mod d(\alpha)$, we can choose $a=1$.
\end{proof}

\subsection{The cyclotomic valuation of $q$-Pochhammer symbols}\label{sec: valpoc}

In this section, we rephrase Theorem \ref{thm: valuationq-pocch} as Proposition \ref{prop: valuationq-pocch} and then we prove the latter.

\begin{defn}\label{def: delta}
Let $r$, $s$, and $b$ be integers with $s\neq 0$ and $b\geq 1$. Set $\alpha:=r/s$, $c:=\gcd(r,s,b)$, $b':=b/c$, and $s':=s/c$. 
If $\gcd(s',b')=1$, then $D_{b'}\left(\alpha\right)$ is well-defined and we set 
\begin{equation}\label{eq: D 0 1}
\gamma:=D_{b'}\left(\alpha\right)+\frac{\lfloor 1-\alpha\rfloor}{b'}\,\cdot
\end{equation}
We  define the (upper semi-continuous) step function $\delta_b(r,s,\cdot):\mathbb R \to \mathbb R$ by:  
$$
\delta_b(r,s,x):=\left\{\begin{array}{cl}
\left\lfloor cx-\gamma\right\rfloor+1 & \textup{if $\gcd(s',b')=1$\,,}\\
0 & \textup{otherwise.}
\end{array}\right.
$$
\end{defn}

\begin{lem}\label{lem: gamma}
The real number $\gamma$ defined in \eqref{eq: D 0 1} belongs to $(0,1]$.
\end{lem}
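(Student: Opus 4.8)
The plan is to show $0 < \gamma \leq 1$ where $\gamma = D_{b'}(\alpha) + \frac{\lfloor 1-\alpha\rfloor}{b'}$. First I would rewrite $\gamma$ entirely in terms of $D_{b'}$ and the defining relation \eqref{db}, which controls $b'D_{b'}(\alpha) - \alpha$. Multiplying $\gamma$ by $b'$ gives
$$
b'\gamma = b'D_{b'}(\alpha) + \lfloor 1-\alpha\rfloor = \big(b'D_{b'}(\alpha)-\alpha\big) + \alpha + \lfloor 1-\alpha\rfloor\,,
$$
so the whole problem reduces to bounding $\alpha + \lfloor 1-\alpha\rfloor$ and then folding in the term $b'D_{b'}(\alpha)-\alpha \in \{0,\dots,b'-1\}$ supplied by Proposition-definition \ref{propdef: db}.

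Next I would handle the elementary quantity $\alpha + \lfloor 1-\alpha\rfloor$. Writing $1-\alpha = \lfloor 1-\alpha\rfloor + \{1-\alpha\}$, one gets $\alpha + \lfloor 1-\alpha\rfloor = 1 - \{1-\alpha\}$, which is exactly $\langle\alpha\rangle$ in the paper's notation. Thus $b'\gamma = \big(b'D_{b'}(\alpha)-\alpha\big) + \langle\alpha\rangle$. Since $\langle x\rangle \in (0,1]$ for every real $x$ by definition, and $b'D_{b'}(\alpha)-\alpha$ is an integer in $\{0,\dots,b'-1\}$, the sum lies in the interval $(0, (b'-1)+1] = (0, b']$. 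Dividing by $b'$ yields $\gamma \in (0,1]$, as required. (When $b'=1$ one has $D_1 = \mathrm{id}$, so $b'D_{b'}(\alpha)-\alpha = 0$ and $\gamma = \langle\alpha\rangle \in (0,1]$ directly, which is consistent.)

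The only subtlety, and the step I expect to need care, is confirming the closed half-open endpoints: the upper bound $\gamma \leq 1$ is attained precisely when $b'D_{b'}(\alpha)-\alpha = b'-1$ and $\langle\alpha\rangle = 1$ (i.e.\ $\alpha \in \mathbb{Z}$), so I must be sure the bracket value $b'\gamma = b'$ is genuinely allowed and gives $\gamma=1$ rather than being excluded; it is, since $\langle\alpha\rangle = 1$ is permitted. The strictness of the lower bound $\gamma > 0$ comes entirely from $\langle\alpha\rangle > 0$, never being zero, so the integer part $b'D_{b'}(\alpha)-\alpha \geq 0$ cannot push the sum down to $0$. This pinning of endpoints is routine once the reduction to $\langle\alpha\rangle$ is made, so the proof is short; the key insight is simply recognizing that $\alpha + \lfloor 1-\alpha\rfloor = \langle\alpha\rangle$ and then invoking the range of \eqref{db}.
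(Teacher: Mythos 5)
Your proof is correct and follows essentially the same route as the paper: both arguments combine the defining property $b'D_{b'}(\alpha)-\alpha\in\{0,\dots,b'-1\}$ with the identity $\alpha+\lfloor 1-\alpha\rfloor=\langle\alpha\rangle\in(0,1]$ (the paper writes it as $\alpha=\langle\alpha\rangle-\lfloor 1-\alpha\rfloor$) and then divide by $b'$. The endpoint discussion you add is sound but not needed beyond what the inequality chain already gives.
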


\begin{proof}
By definition, $b'D_{b'}(\alpha)-\alpha$ belongs to $\{0,\dots,b'-1\}$ and 
$\alpha=\langle\alpha\rangle-\lfloor 1-\alpha\rfloor$, where $\langle\alpha\rangle$ belongs to $(0,1]$. Thus, we have
$$
0<\frac{\langle\alpha\rangle}{b'}\leq D_{b'}(\alpha)+\frac{\lfloor 1-\alpha \rfloor}{b'}\leq \frac{b'-1+\langle\alpha\rangle}{b'}\leq 1\,,
$$
as expected.
\end{proof}

\begin{prop}\label{prop: valuationq-pocch}
Let $r$, $s$, and $b$ be integers such that $s\neq 0$ and $b\geq 1$. Let $n$ be an integer such that $(q^r;q^s)_n$ is well-defined and non-zero. Then we have
$$ 
v_{\phi_b}((q^r;q^s)_n)=\delta_b(r,s,n/b)\,.
$$ 
\end{prop}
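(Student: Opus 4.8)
The plan is to prove the equivalent Proposition \ref{prop: valuationq-pocch} by reducing the cyclotomic valuation to a counting problem and then matching that count to $\delta_b(r,s,n/b)$ through the Dwork map. First I would treat the case $n\geq 0$. Since $v_{\phi_b}$ is additive on products and, by \eqref{eq: qa}, every nonzero integer $a$ satisfies $v_{\phi_b}(1-q^a)=1$ if $b\mid a$ and $0$ otherwise, and since no factor vanishes (we assumed $(q^r;q^s)_n\neq 0$), the product formula $(q^r;q^s)_n=\prod_{i=0}^{n-1}(1-q^{r+si})$ gives
\[
v_{\phi_b}\bigl((q^r;q^s)_n\bigr)=\#\{i\in\{0,\dots,n-1\}:\ b\mid r+si\}\,.
\]
Writing $r=c\tilde r$, $s=cs'$, $b=cb'$ with $c=\gcd(r,s,b)$, the divisibility $b\mid r+si$ is equivalent to $b'\mid \tilde r+s'i=s'(\alpha+i)$, since $s'\alpha=\tilde r$.

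Next I would split according to the two cases in Definition \ref{def: delta}. If $\gcd(s',b')>1$, I would argue that the congruence has no solution: a common divisor $d>1$ of $s'$ and $b'$ that also divided $\tilde r$ would force $dc$ to divide each of $r,s,b$, contradicting the maximality of $c=\gcd(r,s,b)$; hence the count is $0$, matching $\delta_b$. If $\gcd(s',b')=1$, then from $d(\alpha)=s/\gcd(r,s)$ and $c\mid\gcd(r,s)$ we get $d(\alpha)\mid s'$, so $\gcd(d(\alpha),b')=1$ and $D_{b'}(\alpha)$ is well-defined by Proposition-definition \ref{propdef: db}; moreover $D_{b'}(\alpha)\in\frac1{d(\alpha)}\mathbb Z$ by Remark \ref{rem: dmap}, whence $s'D_{b'}(\alpha)\in\mathbb Z$. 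Setting $j_0:=b'D_{b'}(\alpha)-\alpha\in\{0,\dots,b'-1\}$, I would rewrite $s'(\alpha+i)=b'\bigl(s'D_{b'}(\alpha)\bigr)+s'(i-j_0)\equiv s'(i-j_0)\pmod{b'}$, so that $b'\mid s'(\alpha+i)$ exactly when $i\equiv j_0\pmod{b'}$.

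The count of $i\in\{0,\dots,n-1\}$ lying in the residue class $j_0$ modulo $b'$ then equals $\lfloor(n-1-j_0)/b'\rfloor+1$ (this formula also returns $0$ when $n=0$, covering the trivial case). Using $cn/b=n/b'$ and $j_0=b'D_{b'}(\alpha)-\alpha$, matching this to $\delta_b(r,s,n/b)=\lfloor n/b'-\gamma\rfloor+1$ amounts to a single floor identity: writing $n-1-j_0=b'Q+\rho$ with $0\leq\rho<b'$, the quantities $n/b'-\gamma$ and $(n-1-j_0)/b'$ differ by $\{1-\alpha\}/b'\in[0,1/b')$, and since $\rho+\{1-\alpha\}<b'$ this perturbation cannot change the integer part. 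This gives $\lfloor(n-1-j_0)/b'\rfloor=\lfloor n/b'-\gamma\rfloor$, proving the formula for $n\geq 0$; here Lemma \ref{lem: gamma} ($\gamma\in(0,1]$) confirms that $\delta_b$ is indeed the claimed floor.

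For $n<0$ I would apply \eqref{eq: hgneg} to get $v_{\phi_b}((q^r;q^s)_n)=-v_{\phi_b}((q^{r-s};q^{-s})_{-n})$ and invoke the already-established nonnegative case, noting that $\gcd(r-s,-s,b)=c$ and $(r-s)/(-s)=1-\alpha$. Matching then reduces to the identity $\delta_b(r,s,n/b)=-\delta_b(r-s,-s,-n/b)$, which I would derive from the complementation relation $D_{b'}(\alpha)+D_{b'}(1-\alpha)=1$ (provable by observing that this sum lies in $S_{b'}^{-1}\mathbb Z$ while $b'$ times it is an integer in the open interval $(0,2b')$, hence equals $1$), combined with $\lfloor\alpha\rfloor+\lfloor1-\alpha\rfloor\in\{0,1\}$ and $\lfloor -x\rfloor=-\lceil x\rceil$. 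I expect this last bookkeeping for $n<0$ to be the main obstacle: the nonnegative case is a clean counting argument, but the negative case requires carefully tracking the floor/ceiling conversions and the integer-versus-noninteger dichotomy for $\alpha$ so that the two step functions line up with the correct sign.
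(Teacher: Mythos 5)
Your proposal is correct and follows essentially the same route as the paper: reduce the $\phi_b$-valuation to counting the indices $i$ with $b\mid r+si$, translate that congruence into $i\equiv b'D_{b'}(\alpha)-\alpha \pmod{b'}$, count the arithmetic progression and match the resulting floor expression, then reflect via \eqref{eq: hgneg} for $n<0$. The only differences are cosmetic: your closed-form count $\lfloor(n-1-j_0)/b'\rfloor+1$ with the $\{1-\alpha\}/b'$ perturbation replaces the paper's division $n-1=v+mb'$ and its two equivalences, and the floor/ceiling bookkeeping you flag as the main obstacle for $n<0$ (the identity $D_{b'}(1-\alpha)=1-D_{b'}(\alpha)$ together with the $\alpha\in\mathbb Z$ versus $\alpha\notin\mathbb Z$ dichotomy) is exactly what the paper isolates and carries out as Lemma \ref{lem:NegativeVal}.
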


It follows that when $b$ divides both $r$ and $s$, then $c=b$, $b'=1$ and $\delta_b(r,s,n/b)=n$, as expected since $\phi_b(q)$ divides each factor $1-q^{r+is}$.  
In particular, this is the case when $b=1$.

\medskip

In order to prove Proposition \ref{prop: valuationq-pocch} for negative $n$, we need the following lemma. It is also used in the proof of our criterion for the 
$q$-integrality of $q$-hypergeometric sequences.

\begin{lem}\label{lem:NegativeVal}
Let $r$, $s$, and $n$ be integers with $s\neq 0$, and let $b$ be a positive integer. Then we have
\begin{equation}\label{eq:conclude}
\delta_b(r,s,-n/b)=-\delta_b(r-s,-s,n/b)\,.
\end{equation}
\end{lem}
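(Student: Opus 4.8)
The plan is to compare the data attached to the two functions $\delta_b(r,s,\cdot)$ and $\delta_b(r-s,-s,\cdot)$ and to reduce the claimed identity to an elementary identity between floor functions. First I would record that the gcd data match: writing $\tilde c:=\gcd(r-s,-s,b)$, one has $\gcd(r-s,s)=\gcd(r,s)$, hence $\tilde c=\gcd(r,s,b)=c$, so that $\tilde b':=b/\tilde c=b'$ and $\tilde s':=-s/c=-s'$. In particular $\gcd(\tilde s',\tilde b')=\gcd(s',b')$, so the two step functions fall into the same branch of Definition \ref{def: delta}. When $\gcd(s',b')\neq 1$ both sides of \eqref{eq:conclude} vanish and there is nothing to prove, so I may assume $\gcd(s',b')=1$. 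The parameter attached to the right-hand side is then $\tilde\alpha:=(r-s)/(-s)=1-\alpha$, and I write $\tilde\gamma:=D_{b'}(1-\alpha)+\lfloor\alpha\rfloor/b'$ for the corresponding constant from \eqref{eq: D 0 1} (note that $\gcd(s',b')=1$ forces $\gcd(d(\alpha),b')=1=\gcd(d(1-\alpha),b')$, so both Dwork maps are defined).

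The crux is a complementary relation for Dwork maps, namely $D_{b'}(1-\alpha)=1-D_{b'}(\alpha)$. I would prove it by uniqueness in Proposition-definition \ref{propdef: db}: the element $\theta:=1-D_{b'}(\alpha)$ lies in $S_{b'}^{-1}\mathbb Z$, and
$$
b'\theta-(1-\alpha)=(b'-1)-\bigl(b'D_{b'}(\alpha)-\alpha\bigr)
$$
belongs to $\{0,\dots,b'-1\}$ because $b'D_{b'}(\alpha)-\alpha$ does; hence $\theta=D_{b'}(1-\alpha)$. Combining this with $\lfloor\alpha\rfloor+\lfloor 1-\alpha\rfloor=\epsilon$, where $\epsilon:=0$ if $\alpha\notin\mathbb Z$ and $\epsilon:=1$ if $\alpha\in\mathbb Z$, yields $\gamma+\tilde\gamma=1+\epsilon/b'$.

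It then remains to prove that $\delta_b(r,s,-n/b)+\delta_b(r-s,-s,n/b)=0$, that is, that $\lfloor -n/b'-\gamma\rfloor+\lfloor n/b'-\tilde\gamma\rfloor=-2$ (using $c/b=1/b'$). Substituting $\tilde\gamma=1+\epsilon/b'-\gamma$ rewrites the second floor as $\lfloor (n-\epsilon)/b'+\gamma\rfloor-1$, so the goal becomes $\lfloor -n/b'-\gamma\rfloor+\lfloor(n-\epsilon)/b'+\gamma\rfloor=-1$. If $\epsilon=0$ this is $\lfloor -u\rfloor+\lfloor u\rfloor$ with $u:=n/b'+\gamma$; since $b'\gamma=\langle\alpha\rangle+m$ for some integer $m$ (exactly as in the proof of Lemma \ref{lem: gamma}) and $\langle\alpha\rangle=\{\alpha\}\notin\mathbb Z$ when $\alpha\notin\mathbb Z$, the number $u$ is not an integer and $\lfloor -u\rfloor+\lfloor u\rfloor=-1$. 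If $\epsilon=1$ then $\langle\alpha\rangle=1$, so $b'\gamma=1+m\in\mathbb Z$; setting $M:=n+m$, the two arguments become $-(M+1)/b'$ and $M/b'$, and writing $M=qb'+\rho$ with $0\leq\rho\leq b'-1$ gives $\lfloor-(M+1)/b'\rfloor+\lfloor M/b'\rfloor=(-q-1)+q=-1$. In both cases the identity holds, which establishes \eqref{eq:conclude}.

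The main obstacle is the integer case $\epsilon=1$: there the naive symmetry $\gamma+\tilde\gamma=1$ fails (it becomes $1+1/b'$), so one cannot simply pair $\lfloor -u\rfloor$ with $\lfloor u\rfloor$ for a single non-integer $u$; instead one must keep track of the extra shift $1/b'$ and fall back on the explicit remainder computation above.
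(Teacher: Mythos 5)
Your proof is correct and follows essentially the same route as the paper: the key step in both is the complementary relation $D_{b'}(1-\alpha)=1-D_{b'}(\alpha)$, established by uniqueness in Proposition-definition \ref{propdef: db}, followed by a case split on whether $\alpha$ is an integer. The only difference is cosmetic: the paper handles the integer case by a chain of inequalities, whereas you do an explicit division-with-remainder computation, but the substance is identical.
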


\begin{proof}
We set $c:=\gcd(r,s,b)$ and write $b=cb'$ and $s=cs'$. Both sides of Equation \eqref{eq:conclude} are $0$ when $\gcd(s',b')\neq 1$, so we can assume 
that $s'$ and $b'$ are coprime. Set $\alpha:=r/s$ so that $1-\alpha = (r-s)/(-s)$. We have 
$$
\delta_b(r-s,-s,x)=\left\lfloor cx-D_{b'}(1-\alpha)-\frac{\lfloor \alpha\rfloor}{b'}\right\rfloor+1\,.
$$
Since $b'D_{b'}(\alpha)-\alpha$ belongs to $\{0,\dots,b'-1\}$, we have
$$
b'(1-D_{b'}(\alpha))-(1-\alpha)=b'-1-(b'D_{b'}(\alpha)-\alpha)\in\{0,\dots,b'-1\}\,,
$$
so that $D_{b'}(1-\alpha)=1-D_{b'}(\alpha)$. It follows that
\begin{equation}\label{eq:ref floor}
\delta_b(r-s,-s,n/b)=\left\lfloor \frac{n}{b'}+D_{b'}(\alpha)-\frac{\lfloor \alpha\rfloor}{b'}\right\rfloor\,.
\end{equation}
If $x\in\mathbb{R}$, we have $x\in\mathbb{Z}$ or $\lfloor -x\rfloor = -\lfloor x\rfloor-1$, which also yields $\alpha\in\mathbb{Z}$ or $\lfloor 1-\alpha\rfloor=-\lfloor\alpha\rfloor$.
\medskip

Let us first consider the case where $\alpha\notin\mathbb{Z}$. Then $\lfloor\alpha\rfloor=-\lfloor 1-\alpha\rfloor$ and the right hand-side of \eqref{eq:ref floor} becomes 
\begin{equation}\label{eq:ref floor 2}
\left\lfloor \frac{n}{b'}+D_{b'}(\alpha)+\frac{\lfloor 1-\alpha\rfloor}{b'}\right\rfloor\,.
\end{equation}
We have $n+b'D_{b'}(\alpha)-\alpha\in\mathbb{Z}$, but $\alpha\notin\mathbb{Z}$. 
Hence $n+b'D_{b'}(\alpha)+\lfloor 1-\alpha\rfloor$ is not an integer and \eqref{eq:ref floor 2} is equal to
$$
-\left\lfloor -\frac{n}{b'}-D_{b'}(\alpha)-\frac{\lfloor 1-\alpha\rfloor}{b'}\right\rfloor-1=-\delta_b(r,s,-n/b)\,,
$$
as expected.
\medskip

It remains to consider the case where $\alpha\in\mathbb Z$. Set $k:=-\delta_b(r,s,-n/b)\in\mathbb{Z}$. We have
$$
\left\lfloor -\frac{n}{b'}-D_{b'}(\alpha)-\frac{\lfloor 1-\alpha\rfloor}{b'}\right\rfloor=-k-1 \,,
$$
which yields the equivalences
\begin{align*}
-k-1\leq -\frac{n}{b'}-D_{b'}(\alpha)-\frac{\lfloor 1-\alpha\rfloor}{b'}< -k&\iff k < \frac{n}{b'}+D_{b'}(\alpha)+\frac{1-\alpha}{b'}\leq k+1\\
&\iff k-\frac{1}{b'} < \frac{n}{b'}+D_{b'}(\alpha)-\frac{\alpha}{b'}\leq k+1-\frac{1}{b'}\,\cdot\\
\end{align*}
Even if $b'=1$, we obtain that 
$$
\left\lfloor \frac{n}{b'}+D_{b'}(\alpha)-\frac{\alpha}{b'}\right\rfloor = k \,.
$$
Combined with \eqref{eq:ref floor}, this yields \eqref{eq:conclude} and ends the proof of the lemma.
\end{proof}

\begin{proof}[Proof of Proposition \ref{prop: valuationq-pocch}]
Set $r'=r/c$. We first consider the case $n\geq 0$. We assume that $(q^r;q^s)_n$ is non-zero, 
that is $\alpha\notin\mathbb{Z}_{\leq 0}$ or $n\leq -\alpha$. 

We observe that $b\mid (r+is)$ if and only if $b'\mid (r'+is')$. Since we have $\gcd(r',s',b')=1$, if $b'$ and $s'$ are not coprime, then $b\nmid (r+is)$ and 
 $v_{\phi_b}((q^r;q^s)_n)=0$. 
 
 We now assume  that $b'$ and $s'$ are coprime. 
We need to find, among the powers of $q$ in the product defining $(q^r;q^s)_n$, which are multiples of $b$. We have the following equivalences:
\begin{align*}
r'+is'\equiv0\mod b' &\iff i\equiv -\alpha\mod  b'S_{b'}^{-1}\mathbb Z\\
&\iff i\equiv b'D_{b'}(\alpha)-\alpha\mod b'\\
&\iff\exists k\in\mathbb{N},\,i=b'D_{b'}(\alpha)-\alpha+kb'\,,
\end{align*}
because $i\geq 0$ and $b'D_{b'}(\alpha)-\alpha$ belongs to $\{0,\dots,b'-1\}$. We aim to count how many such integers $i$ belong to $\{0,\dots,n-1\}$. 
Writing $n-1=v+mb'$, with $0\leq v\leq b'-1$, and setting $\eta:=b'D_{b'}(\alpha)-\alpha$, we find all the integers $\eta,\eta+b',\dots,\eta+(m-1)b'$.  
Therefore, we have at least $m$ such integers $i$. There is one more such integer if and only if $v\geq b'D_{b'}(\alpha)-\alpha$. 
Furthermore, we have 
\begin{align}
v\geq b'D_{b'}(\alpha)-\alpha &\iff v+1\geq b'D_{b'}(\alpha)+\lfloor 1-\alpha\rfloor \label{eq: equi1}\\
&\iff \frac{v+1}{b'}\geq D_{b'}(\alpha)+\frac{\lfloor 1-\alpha\rfloor}{b'}\,\cdot \label{eq: equi2}
\end{align}
Equivalence \eqref{eq: equi1} follows from the implication 
\begin{align*}
v+1\geq b'D_{b'}(\alpha)+\lfloor 1-\alpha\rfloor &\Rightarrow v+1-\langle\alpha\rangle \geq b'D_{b'}(\alpha)-\alpha \\
&\Rightarrow v \geq b'D_{b'}(\alpha)-\alpha\,,
\end{align*}
because $1-\langle\alpha\rangle$ belongs to $[0,1)$. By Lemma \ref{lem: gamma}, since both sides of Inequality \eqref{eq: equi2} belong to $(0,1]$, we obtain that 
\begin{align*}
v_{\phi_b}((q^r;q^s)_n)&=m+\left\lfloor \frac{v+1}{b'}-D_{b'}(\alpha)-\frac{\lfloor 1-\alpha\rfloor}{b'}\right\rfloor+1\\
&=\left\lfloor \frac{n}{b'}-D_{b'}(\alpha)-\frac{\lfloor 1-\alpha\rfloor}{b'}\right\rfloor+1 \,,
\end{align*}
as expected. 
\medskip

We now assume that $n<0$ and that $(q^r;q^s)_n$ is well-defined, that is $\alpha\notin\mathbb{Z}_{>0}$ or $n> -\alpha$. We have
$$
(q^r;q^s)_n=\frac{1}{(q^{r-s};q^{-s})_{-n}} \,\cdot
$$
Using the non-negative case, we get that $v_{\phi_b}((q^r;q^s)_n)=-\delta_{b}(r-s,-s,-n/b)$. By Lemma~\ref{lem:NegativeVal}, the latter is equal to $\delta_b(r,s,n/b)$. 
This ends the proof.
\end{proof}

\subsection{Extension to $q$-hypergeometric terms}\label{sec: Delta}

Let $$\mathbf r=((r_1,s_1),\ldots,(r_v,s_v)) \quad\mbox{ and }\quad \mathbf t=((t_1,u_1),\ldots,(t_w,u_w))$$ be two vectors with integer coordinates and such that $s_1,\ldots,s_v,u_1,\ldots,u_w$ are non-zero.  Set $\alpha_i:=r_i/s_i$ and $\beta_j:=t_j/u_j$. For every non-negative $n$, the ratio
$$
Q_{\mathbf{r},\mathbf{t}}(q;n)=\frac{(q^{r_1};q^{s_1})_n\cdots(q^{r_v};q^{s_v})_n}{(q^{t_1};q^{u_1})_n\cdots(q^{t_w};q^{u_w})_n}
$$
is well-defined if we have either $\beta_j\notin\mathbb{Z}_{\leq 0}$ or $n\leq -\beta_j$, for all $j\in\{1,\ldots,w\}$. 
According to \eqref{eq: hgneg}, this $q$-hypergeometric term admits the following  extension to negative $n$:
$$
Q_{\mathbf{r},\mathbf{t}}(q;n)=\frac{(q^{t_1-u_1};q^{-u_1})_{-n}\cdots(q^{t_w-u_w};q^{-u_w})_{-n}}{(q^{r_1-s_1};q^{-s_1})_{-n}\cdots(q^{r_v-s_v};q^{-s_v})_{-n}}\,\cdot
$$
The latter is well-defined if  we have either $\alpha_i\notin\mathbb{Z}_{>0}$ or $n>-\alpha_i$, for all $i\in\{1,\ldots,v\}$.

If $R(q)$ and $S(q)$ are non-zero elements in $\mathbb{Z}[q^{-1},q]$, we write $R(q)\sim S(q)$ when $R(q)/S(q)$ is  a unit of $\mathbb{Z}[q^{-1},q]$, that is when it 
is of the form $\epsilon q^m$ with $\epsilon\in \{-1,1\}$ and $m\in\mathbb Z$. 

We introduce now some step functions that generalize the Landau functions mentioned in the introduction.

\begin{defn}\label{def: Delta}
We continue with the notation of Section \ref{sec: valpoc}. For every integer $b$, we define the (upper semi-continuous) step function $\Delta^{\mathbf{r},\mathbf{t}}_b : \mathbb R \to \mathbb R$ by: 
$$
\Delta^{\mathbf{r},\mathbf{t}}_b(x):=\sum_{i=1}^v\delta_b(r_i,s_i,x)-\sum_{j=1}^w\delta_b(t_j,u_j,x) \,.
$$
\end{defn}

As a direct consequence of Proposition \ref{prop: valuationq-pocch}, we deduce the following result.

\begin{cor}\label{coro:valuation}
Let $n\in\mathbb Z$ be such that $Q_{\mathbf{r},\mathbf{t}}(q;n)$ is well-defined and non-zero. 
Then we have 
$$
v_{\phi_b}(Q_{\mathbf{r},\mathbf{t}}(q;n))=\Delta^{\mathbf{r},\mathbf{t}}_b(n/b)\,,
$$ 
that is
\begin{equation}\label{eq: polyQalphabeta}
Q_{\mathbf{r},\mathbf{t}}(q;n)\sim\prod_{b=1}^\infty\phi_b(q)^{\Delta^{\mathbf{r},\mathbf{t}}_b(n/b)} \,.
\end{equation}
\end{cor}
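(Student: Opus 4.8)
The plan is to deduce Corollary \ref{coro:valuation} directly from Proposition \ref{prop: valuationq-pocch} together with the additivity of the $\phi_b$-valuation. First I would recall that the $q$-hypergeometric term $Q_{\mathbf{r},\mathbf{t}}(q;n)$ is by definition the ratio of products
$$
Q_{\mathbf{r},\mathbf{t}}(q;n)=\frac{\prod_{i=1}^v(q^{r_i};q^{s_i})_n}{\prod_{j=1}^w(q^{t_j};q^{u_j})_n}\,,
$$
so that, since $v_{\phi_b}$ is a valuation on the field $\mathbb{Q}(q)$, it turns products into sums and quotients into differences:
$$
v_{\phi_b}(Q_{\mathbf{r},\mathbf{t}}(q;n))=\sum_{i=1}^v v_{\phi_b}\big((q^{r_i};q^{s_i})_n\big)-\sum_{j=1}^w v_{\phi_b}\big((q^{t_j};q^{u_j})_n\big)\,.
$$

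Next I would invoke Proposition \ref{prop: valuationq-pocch} term by term. For each factor, the hypothesis that $Q_{\mathbf{r},\mathbf{t}}(q;n)$ is well-defined and non-zero guarantees that each individual $q$-Pochhammer symbol $(q^{r_i};q^{s_i})_n$ (and each $(q^{t_j};q^{u_j})_n$) is itself well-defined and non-zero, so the proposition applies and gives $v_{\phi_b}\big((q^{r_i};q^{s_i})_n\big)=\delta_b(r_i,s_i,n/b)$, and similarly for the $t_j,u_j$. Substituting these into the displayed sum and recognizing the right-hand side as the definition of $\Delta^{\mathbf{r},\mathbf{t}}_b$ from Definition \ref{def: Delta} immediately yields
$$
v_{\phi_b}(Q_{\mathbf{r},\mathbf{t}}(q;n))=\sum_{i=1}^v\delta_b(r_i,s_i,n/b)-\sum_{j=1}^w\delta_b(t_j,u_j,n/b)=\Delta^{\mathbf{r},\mathbf{t}}_b(n/b)\,,
$$
which is the first assertion.

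Finally, to obtain the product formula \eqref{eq: polyQalphabeta}, I would appeal to the unique decomposition \eqref{eq: decomposition} recalled in Section \ref{subsec: pre}: every ratio of products of terms of the form $(q^r;q^s)_n$ and $(1-q^s)$ can be written uniquely as $\pm q^{v_{q,n}}\prod_{b\geq 1}\phi_b(q)^{v_{b,n}}$, where the exponents $v_{b,n}$ are exactly the $\phi_b$-valuations and are zero for all but finitely many $b$. Since the $\sim$ relation ignores the unit factor $\pm q^{v_{q,n}}$, and we have just computed $v_{b,n}=\Delta^{\mathbf{r},\mathbf{t}}_b(n/b)$ for every $b$, the product $\prod_{b=1}^\infty\phi_b(q)^{\Delta^{\mathbf{r},\mathbf{t}}_b(n/b)}$ agrees with $Q_{\mathbf{r},\mathbf{t}}(q;n)$ up to a unit of $\mathbb{Z}[q^{-1},q]$, giving \eqref{eq: polyQalphabeta}. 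There is no real obstacle here: the corollary is a formal consequence of additivity of valuations and the previously established single-factor formula. The only point requiring a word of care is the tacit finiteness claim — that $\Delta^{\mathbf{r},\mathbf{t}}_b(n/b)=0$ for all but finitely many $b$ — which is already guaranteed by the decomposition \eqref{eq: decomposition}, so the infinite product is well-defined.
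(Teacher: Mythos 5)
Your argument is correct and is exactly the route the paper intends: the corollary is stated there as ``a direct consequence of Proposition~\ref{prop: valuationq-pocch}'', namely additivity of the valuation $v_{\phi_b}$ over the product defining $Q_{\mathbf{r},\mathbf{t}}(q;n)$, the single-factor formula $v_{\phi_b}((q^r;q^s)_n)=\delta_b(r,s,n/b)$, and the decomposition \eqref{eq: decomposition} for the product form. Your remark that well-definedness and non-vanishing of the ratio pass to each individual $q$-Pochhammer factor (in both the $n\geq 0$ and $n<0$ cases, via \eqref{eq: hgneg}) is the only point needing care, and you handle it correctly.
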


\begin{rem}\emph{Let $\boldsymbol{\alpha}:=(\alpha_1,\dots,\alpha_v)$ and $\boldsymbol{\beta}:=(\beta_1,\dots,\beta_w)$ be vectors of rational numbers. Let $d:=d_{\boldsymbol{\alpha},\boldsymbol{\beta}}$ be the least common multiple of the denominators of the rational numbers $\alpha_i$ and $\beta_j$. Let $n$ be an integer such that
\begin{equation}\label{defQtilde}
\widetilde{Q}_{\boldsymbol{\alpha},\boldsymbol{\beta}}(q;n):=\frac{(q^{\alpha_1};q)_n\cdots(q^{\alpha_v};q)_n}{(q^{\beta_1};q)_n\cdots(q^{\beta_w};q)_n}
\end{equation}
is well-defined and non-zero. Then $\widetilde{Q}_{\boldsymbol{\alpha},\boldsymbol{\beta}}(q;n)$ belongs to $\mathbb{Q}(q^{1/d})$. By Remark \ref{rem: iso}, 
 Corollary~\ref{coro:valuation} implies that 
\begin{equation}\label{eq: polytildeQalphabeta}
\widetilde{Q}_{\boldsymbol{\alpha},\boldsymbol{\beta}}(q;n)\sim\prod_{b=1}^\infty
\phi_b\left(q^{1/d}\right)^{\Delta_b^{\mathbf{r},\mathbf{t}}(n/b)} \,,
\end{equation}
where the equivalence relation $\sim$ has to be understood in $\mathbb{Z}[q^{-1/d},q^{1/d}]$, and where 
$$
\mathbf{r}=((d\alpha_1,d),\dots,(d\alpha_v,d))\quad\textup{and}\quad\mathbf{t}=((d\beta_1,d),\dots,(d\beta_w,d))\,. 
$$}
\end{rem}

\section{First criteria for $q$-integrality of basic hypergeometric sequences}\label{sec:CriterionIntegrality}

In this section, we provide a criterion for the 
$q$-integrality of the $q$-hypergeometric sequences in terms of the Landau functions $\Delta^{\mathbf{r},\mathbf{t}}_b$, as well as related results.

\subsection{A first criterion of $q$-integrality}\label{sec: first criteria}

Our first result reads as follows.

\begin{prop}\label{propo: N-integrality} We continue with the notation of the previous sections. 
Let us assume that $(Q_{\mathbf{r},\mathbf{t}}(q;n))_{n\geq 0}$ is a well-defined sequence. Then the two following assertions are equivalent.
\begin{itemize}
\item[$\mathrm{(i)}$] There exists $C(q)\in\mathbb{Z}[q]\setminus\{0\}$ such that, for every $n\geq 0$, $C(q)^nQ_{\mathbf{r},\mathbf{t}}(q;n)\in\mathbb{Z}[q^{-1},q]$.
\item[$\mathrm{(ii)}$] For all but finitely many positive integers $b$, $\Delta_b^{\mathbf{r},\mathbf{t}}$ is non-negative on $\mathbb{R}_{\geq 0}$.
\end{itemize}
\end{prop}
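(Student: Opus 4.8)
The plan is to prove the equivalence via the asymptotic analysis already set up in the excerpt, specifically the equivalence \eqref{eq: iff2}, together with the valuation formula of Corollary \ref{coro:valuation}. Recall that \eqref{eq: iff2} states that assertion (i) holds if and only if $v_{\phi_b}(Q_{\mathbf{r},\mathbf{t}}(q;n))\geq 0$ for all sufficiently large $b$ and all $n\geq 0$. By Corollary \ref{coro:valuation}, for every $n$ such that $Q_{\mathbf{r},\mathbf{t}}(q;n)$ is well-defined and non-zero we have $v_{\phi_b}(Q_{\mathbf{r},\mathbf{t}}(q;n))=\Delta_b^{\mathbf{r},\mathbf{t}}(n/b)$. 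Thus (i) is equivalent to the statement that $\Delta_b^{\mathbf{r},\mathbf{t}}(n/b)\geq 0$ for all $b\gg 1$ and all $n\geq 0$. The task therefore reduces to reconciling this pointwise condition (over the rational arguments $n/b$, $n\geq 0$) with assertion (ii), which asks that $\Delta_b^{\mathbf{r},\mathbf{t}}$ be non-negative on all of $\mathbb{R}_{\geq 0}$.

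The implication (ii)$\Rightarrow$(i) is immediate: if $\Delta_b^{\mathbf{r},\mathbf{t}}\geq 0$ on $\mathbb{R}_{\geq 0}$ for all but finitely many $b$, then in particular $\Delta_b^{\mathbf{r},\mathbf{t}}(n/b)\geq 0$ for all $n\geq 0$ for those $b$, which by the reduction above yields (i). The substance is in the converse (i)$\Rightarrow$(ii). First I would record a scaling invariance: from Definition \ref{def: delta} one checks that $\delta_b(r,s,\cdot)$, hence $\Delta_b^{\mathbf{r},\mathbf{t}}(\cdot)$, depends on $b$ only through a controlled structure, and in particular that for $b$ coprime to $d_{\mathbf{r},\mathbf{t}}$ the value $\delta_b(r_i,s_i,x)=\lfloor x-D_b(\alpha_i)-\lfloor 1-\alpha_i\rfloor/b\rfloor+1$ simplifies using Proposition \ref{prop Db}, since for $b\geq \mathfrak{n}_{\alpha_i}$ we have $D_b(\alpha_i)=\langle a\alpha_i\rangle$ with $ab\equiv 1\bmod d(\alpha_i)$. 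This exhibits $\Delta_b^{\mathbf{r},\mathbf{t}}$, for large $b$ coprime to $d_{\mathbf{r},\mathbf{t}}$, as a step function depending only on the residue class $a$ of $b^{-1}$ modulo $d_{\mathbf{r},\mathbf{t}}$.

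The key step, and what I expect to be the main obstacle, is to pass from the density of the evaluation points $\{n/b : n\geq 0\}$ to non-negativity on the whole half-line $\mathbb{R}_{\geq 0}$. The point is that for fixed $b$ the arguments $n/b$ form only the arithmetic progression $\tfrac{1}{b}\mathbb{Z}_{\geq 0}$, which is too coarse to detect the value of an upper semi-continuous step function at an arbitrary real $x$; a negative dip of $\Delta_b^{\mathbf{r},\mathbf{t}}$ on a short interval could be invisible to the grid for that particular $b$. The resolution is to exploit that we have infinitely many $b$ at our disposal and that the functions are, up to the simplification above, periodic in the residue class. Concretely, I would argue by contrapositive: suppose (ii) fails, so there are infinitely many $b$ and points $x_b\in\mathbb{R}_{\geq 0}$ with $\Delta_b^{\mathbf{r},\mathbf{t}}(x_b)<0$, i.e. $\leq -1$ since the function is integer-valued. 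Using upper semi-continuity, each such strict negativity holds on a half-open interval $[x_b,x_b+\epsilon_b)$ of positive length. I would then show, via the simplified description of $\Delta_b^{\mathbf{r},\mathbf{t}}$ for $b$ coprime to $d_{\mathbf{r},\mathbf{t}}$ together with Dirichlet's theorem guaranteeing infinitely many $b$ in any admissible residue class, that one may choose $b$ large enough so that the grid $\tfrac{1}{b}\mathbb{Z}_{\geq 0}$ meets this interval of negativity; this produces an $n$ with $\Delta_b^{\mathbf{r},\mathbf{t}}(n/b)<0$ for some arbitrarily large $b$, contradicting (i). The delicate bookkeeping is to ensure the location $x_b$ of the dip can be controlled uniformly as $b$ varies within a fixed residue class, so that a single interval of negativity is hit by the grid once $b$ is large; this uniformity follows from the fact that, modulo the integer part contributing the linear drift, the shape of $\Delta_b^{\mathbf{r},\mathbf{t}}$ stabilizes along the residue class of $b^{-1}\bmod d_{\mathbf{r},\mathbf{t}}$.
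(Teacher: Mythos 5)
Your reduction of the statement to the non-negativity of $\Delta_b^{\mathbf{r},\mathbf{t}}$ on the grid $\frac{1}{b}\mathbb{Z}_{\geq 0}$, via \eqref{eq: iff2} and Corollary \ref{coro:valuation}, is exactly what the paper does, and your implication $\mathrm{(ii)}\Rightarrow\mathrm{(i)}$ is fine. You have also correctly isolated the only real difficulty: passing from non-negativity on the grid to non-negativity on all of $\mathbb{R}_{\geq 0}$. But your proposed resolution of that difficulty has a genuine gap. The justification you offer --- that ``the shape of $\Delta_b^{\mathbf{r},\mathbf{t}}$ stabilizes along the residue class of $b^{-1}\bmod d_{\mathbf{r},\mathbf{t}}$'' --- is false as stated. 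By \eqref{eq:DeltaGeneral} the jump abscissae have the form $\frac{D_{b/c}(\alpha)+k}{c}+\frac{\lfloor 1-\alpha\rfloor}{b}$; the first summand stabilizes along a residue class for large $b$, but the second is a genuinely $b$-dependent shift. Two jumps coming from parameters with $\langle\alpha_1\rangle=\langle\alpha_2\rangle$ but $\lfloor 1-\alpha_1\rfloor\neq\lfloor 1-\alpha_2\rfloor$ (e.g.\ $(r,s)=(1,2)$ and $(3,2)$) converge to the \emph{same} limit position while remaining at distance exactly $|\lfloor 1-\alpha_1\rfloor-\lfloor 1-\alpha_2\rfloor|/b$. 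In that situation a negative dip of $\Delta_b^{\mathbf{r},\mathbf{t}}$ can have length $\Theta(1/b)$, so no argument based on a fixed interval $[x_b,x_b+\epsilon_b)$ of negativity with $\epsilon_b$ bounded below (which is what ``stabilization'' would give you) can guarantee that the grid $\frac{1}{b}\mathbb{Z}_{\geq 0}$ meets it.

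What is actually needed --- and what the paper proves as the second half of Lemma \ref{lem:DeltaJumps} --- is the quantitative statement that for all sufficiently large $b$ any two \emph{distinct} jumps of $\Delta_b^{\mathbf{r},\mathbf{t}}$ are at distance at least $1/b$. (The proof splits into the case of equal leading terms, where the gap is $|\lfloor 1-\alpha_1\rfloor-\lfloor 1-\alpha_2\rfloor|/b\geq 1/b$, and the case of distinct leading terms, where the gap is bounded below by a constant depending only on $d_{\mathbf{r},\mathbf{t}}$.) Granting this, every maximal interval of constancy of the upper semi-continuous step function $\Delta_b^{\mathbf{r},\mathbf{t}}$ has length at least $1/b$ and hence contains a point $n/b$, so the grid already sees every value of the function and $\mathrm{(i)}\Rightarrow\mathrm{(ii)}$ follows directly, with no need for the contrapositive, for Dirichlet's theorem, or for restricting to $b$ coprime to $d_{\mathbf{r},\mathbf{t}}$ (a restriction which in any case would not suffice, since $\mathrm{(ii)}$ concerns all large $b$, including those sharing a factor with $d_{\mathbf{r},\mathbf{t}}$, for which the jumps split according to the $c_i=\gcd(r_i,s_i,b)$). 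Your outline could be completed, but only by supplying this separation estimate, which is the actual content of the paper's proof.
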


According to \eqref{eq: iff1}, we deduce from Proposition~\ref{propo: N-integrality} the following result.

\begin{cor}\label{coro:integrality}
Let us assume that $(Q_{\mathbf{r},\mathbf{t}}(q;n))_{n\geq 0}$ is a well-defined sequence.  
Let $\mathcal N_1:=\{i \in\{1,\ldots,v\} : s_i<0\}$, $\mathcal N_2:=\{j \in\{1,\ldots,w\} : u_j<0\}$, and $s= \sum_{i\in\mathcal N_1} s_i-\sum_{j\in\mathcal N_2} u_j $. 
Let us assume that $s\geq 0$. Then the two following assertions are equivalent.
\begin{itemize}
\item[$\mathrm{(i)}$] The sequence $(Q_{\mathbf{r},\mathbf{t}}(q;n))_{n\geq 0}$ is $q$-integral.
\item[$\mathrm{(ii)}$] For all but finitely many positive integers $b$, $\Delta_b^{\mathbf{r},\mathbf{t}}$ is non-negative on $\mathbb{R}_{\geq 0}$.
\end{itemize}
\end{cor}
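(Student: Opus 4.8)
The plan is to deduce Corollary \ref{coro:integrality} directly from Proposition \ref{propo: N-integrality} together with the asymptotic analysis of the $q$-valuation carried out in Section \ref{subsec: pre}, most notably the equivalence \eqref{eq: iff1}. Recall that \eqref{eq: iff1} states that $(Q_{\mathbf{r},\mathbf{t}}(q;n))_{n\geq 0}$ is $q$-integral if and only if $s\geq 0$ and $v_{\phi_b}(Q_{\mathbf{r},\mathbf{t}}(q;n))\geq 0$ for all but finitely many $b$, whereas \eqref{eq: iff2} captures exactly the weaker condition of Proposition \ref{propo: N-integrality}, namely the existence of $C(q)\in\mathbb{Z}[q]\setminus\{0\}$ with $C(q)^nQ_{\mathbf{r},\mathbf{t}}(q;n)\in\mathbb{Z}[q^{-1},q]$, which amounts to controlling only the $\phi_b$-valuations and ignoring the $q$-valuation. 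The whole point is that $q$-integrality differs from this weaker integrality precisely by the sign condition $s\geq 0$ governing the $q$-valuation through \eqref{eq: asymp2}.

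First I would invoke the standing hypothesis $s\geq 0$, which is assumed in the statement of the corollary. Under this hypothesis, the asymptotic \eqref{eq: asymp2} gives $v_q(Q_{\mathbf{r},\mathbf{t}}(q;n))=s\binom{n}{2}+O(n)\geq -O(n)$, so the $q$-valuation is bounded below by a linear function of $n$; hence it poses no obstruction to $q$-integrality once a suitable power of $q$ (or of $1-q$) is absorbed into $C(q)^n$. In other words, with $s\geq 0$ the sign condition in \eqref{eq: iff1} is automatically satisfied, so \eqref{eq: iff1} collapses to the statement that $q$-integrality is equivalent to $v_{\phi_b}(Q_{\mathbf{r},\mathbf{t}}(q;n))\geq 0$ for all but finitely many $b$.

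Next I would translate the condition on the $\phi_b$-valuations into the condition on the step functions $\Delta_b^{\mathbf{r},\mathbf{t}}$ using Corollary \ref{coro:valuation}, which asserts $v_{\phi_b}(Q_{\mathbf{r},\mathbf{t}}(q;n))=\Delta_b^{\mathbf{r},\mathbf{t}}(n/b)$. Thus $v_{\phi_b}(Q_{\mathbf{r},\mathbf{t}}(q;n))\geq 0$ for all $n\geq 0$ is the same as $\Delta_b^{\mathbf{r},\mathbf{t}}(n/b)\geq 0$ for all $n\geq 0$; since $\Delta_b^{\mathbf{r},\mathbf{t}}$ is a step function, this sampling at the points $n/b$ covers a dense enough grid that nonnegativity on the sampled points is equivalent to nonnegativity on all of $\mathbb{R}_{\geq 0}$. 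This is exactly the content already encoded in Proposition \ref{propo: N-integrality}, whose assertion (ii) is that $\Delta_b^{\mathbf{r},\mathbf{t}}$ is non-negative on $\mathbb{R}_{\geq 0}$ for all but finitely many $b$. Combining the chain $\text{(i)}\Leftrightarrow s\geq 0 \text{ and } v_{\phi_b}\geq 0\ \forall b\gg 1 \Leftrightarrow v_{\phi_b}\geq 0\ \forall b\gg 1 \Leftrightarrow \Delta_b^{\mathbf{r},\mathbf{t}}\geq 0\ \forall b\gg 1$, where the middle equivalence uses the hypothesis $s\geq 0$, yields the desired equivalence.

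The main subtlety, rather than a genuine obstacle, is bookkeeping: one must make sure that the ``for all but finitely many $b$'' quantifier in Proposition \ref{propo: N-integrality}(ii) matches the condition coming out of \eqref{eq: iff1}, and that finitely many small values of $b$ — for which $\Delta_b^{\mathbf{r},\mathbf{t}}$ might be negative somewhere — can indeed be absorbed harmlessly into the constant $C(q)$. This is precisely why \eqref{eq: iff1} and \eqref{eq: iff2} were phrased with the asymptotic qualifier $b\gg 1$ in Section \ref{subsec: pre}: a negative $\phi_b$-valuation for a single fixed $b$ contributes at most $\phi_b(q)^{-\Delta_b^{\mathbf{r},\mathbf{t}}(n/b)}$, whose growth is $O(n)$ by \eqref{eq: asymp1}, and finitely many such factors can be cleared by enlarging $C(q)$. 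Since the genuine work — establishing the asymptotics \eqref{eq: asymp1} and \eqref{eq: asymp2}, the valuation formula of Corollary \ref{coro:valuation}, and the equivalence of Proposition \ref{propo: N-integrality} — has already been done upstream, the proof of the corollary is a short deduction, essentially a matter of citing \eqref{eq: iff1} and noting that the hypothesis $s\geq 0$ removes the $q$-valuation from consideration.
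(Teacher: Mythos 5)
Your proposal is correct and follows essentially the same route as the paper, whose entire proof of this corollary is the one-line observation that it follows from \eqref{eq: iff1} combined with Proposition \ref{propo: N-integrality} (the passage from sampled values $\Delta_b^{\mathbf{r},\mathbf{t}}(n/b)$ to nonnegativity on all of $\mathbb{R}_{\geq 0}$ being already handled inside that proposition via Lemma \ref{lem:DeltaJumps}). Your additional remarks on absorbing the $q$-valuation and finitely many bad $b$ into $C(q)^n$ correctly unpack what \eqref{eq: iff1} and \eqref{eq: iff2} encode.
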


Throughout this section, we fix  $\mathbf{r}$ and $\mathbf{t}$, and we write $\Delta_b$ as a shorthand for $\Delta_b^{\mathbf{r},\mathbf{t}}$. 
Before proving Proposition \ref{propo: N-integrality}, we need to establish the following lemma about the jumps of Landau step functions.

\begin{lem}\label{lem:DeltaJumps}
For every integers $k$ and $b\geq 1$, and every real number $x$, we have $$\Delta_b(x+k)=\Delta_b(x)+k\Delta_b(1)\,.$$ 
Furthermore, if $b$ is large enough, then the distance between any two distinct jumps of $\Delta_b$ is greater than or equal to $1/b$. 
\end{lem}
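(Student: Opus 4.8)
The plan is to prove the two assertions separately, starting with the simpler translation identity and then turning to the spacing of the jumps, where all the real work lies.

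\textbf{The translation formula.} First I would compute $\delta_b(r,s,1)$. By Lemma~\ref{lem: gamma} the quantity $\gamma$ attached to $\delta_b(r,s,\cdot)$ lies in $(0,1]$, so $c-\gamma\in[c-1,c)$ and $\lfloor c-\gamma\rfloor=c-1$; hence $\delta_b(r,s,1)=\lfloor c-\gamma\rfloor+1=c$ when $\gcd(s',b')=1$, and $\delta_b(r,s,1)=0$ otherwise. In the non-degenerate case $\delta_b(r,s,x)=\lfloor cx-\gamma\rfloor+1$, so for every integer $k$ one has $\delta_b(r,s,x+k)=\lfloor cx-\gamma+ck\rfloor+1=\delta_b(r,s,x)+ck=\delta_b(r,s,x)+k\,\delta_b(r,s,1)$, while the degenerate case is trivial since both sides vanish. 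Summing this term-by-term identity over the entries of $\mathbf r$ and $\mathbf t$ with the appropriate signs, as in Definition~\ref{def: Delta}, yields $\Delta_b(x+k)=\Delta_b(x)+k\,\Delta_b(1)$.

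\textbf{Locating the jumps.} For the second assertion I would first record that, when $\gcd(s_i',b_i')=1$, the step function $\delta_b(r_i,s_i,\cdot)$ jumps exactly at those $x$ with $c_ix-\gamma_i\in\mathbb Z$. Writing $\eta_i:=b_i'D_{b_i'}(\alpha_i)-\alpha_i\in\{0,\dots,b_i'-1\}$, the computation $b_i'\gamma_i=\eta_i+\langle\alpha_i\rangle$ from the proof of Lemma~\ref{lem: gamma} shows that these are the $x$ satisfying $bx\equiv\eta_i+\langle\alpha_i\rangle \pmod{b_i'}$; within a single term the jumps are thus spaced $1/c_i\geq 1/b$ apart, since $c_i\mid b$. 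Moreover, every $\langle\alpha_i\rangle$ and $\langle\beta_j\rangle$ has denominator dividing $d:=d_{\mathbf r,\mathbf t}$, so multiplying the relation $bx=\eta_i+\langle\alpha_i\rangle+b_i'm$ by $d$ gives $dbx\in\mathbb Z$: every jump of $\Delta_b$ lies in $\tfrac1{db}\mathbb Z$.

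\textbf{The cross-term estimate and the main obstacle.} It then remains to bound the distance between jumps coming from two distinct terms. Suppose $x_i,x_j$ are such jumps with $0<|x_i-x_j|<1/b$. Since both lie in $\tfrac1{db}\mathbb Z$, we get $b(x_i-x_j)=k/d$ with $1\leq k\leq d-1$; comparing fractional parts in $bx_i\equiv\eta_i+\langle\alpha_i\rangle$ and $bx_j\equiv\eta_j+\langle\alpha_j\rangle$ forces $\langle\alpha_i\rangle\neq\langle\alpha_j\rangle$ and $\eta_i-\eta_j\equiv P \pmod{g}$ for some $P\in\{-1,0,1\}$, where $g:=\gcd(b_i',b_j')$. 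Multiplying by the unit $s_i's_j'$ and using $s_i'\eta_i\equiv -r_i' \pmod g$ converts this into $g\mid M_{ij}$ or $g\mid M_{ij}\pm s_i's_j'$, where $M_{ij}:=r_j's_i'-r_i's_j'=s_i's_j'(\alpha_j-\alpha_i)$ is a \emph{nonzero} integer (nonzero because $\langle\alpha_i\rangle\neq\langle\alpha_j\rangle$ gives $\alpha_i\neq\alpha_j$). The main obstacle, and precisely the point requiring ``$b$ large enough'', is to control $g$ uniformly: the subtlety is that $c_i=\gcd(r_i,s_i,b)$, hence $b_i'$ and $\eta_i$, all vary with $b$, so the estimate must hold for every large $b$ and not merely along a fixed residue class. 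This I would resolve by observing that $c_i$ divides $\gcd(r_i,s_i)$, whence $g\geq b/\lcm(c_i,c_j)\to\infty$, while $|M_{ij}|$ and $|M_{ij}\pm s_i's_j'|$ remain bounded independently of $b$. Consequently, once $b$ is large, the divisibilities above force $M_{ij}=0$ or $M_{ij}=\mp s_i's_j'$, i.e. $\alpha_i\equiv\alpha_j \pmod 1$ and therefore $\langle\alpha_i\rangle=\langle\alpha_j\rangle$, contradicting $\langle\alpha_i\rangle\neq\langle\alpha_j\rangle$. This excludes every gap shorter than $1/b$ and finishes the proof.
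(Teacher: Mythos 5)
Your proof is correct. The translation identity is established exactly as in the paper: Lemma~\ref{lem: gamma} gives $\gamma\in(0,1]$, hence $\delta_b(r,s,1)=c$ and $\Delta_b(1)=\sum_{i\in V_b}c_i-\sum_{j\in W_b}d_j$, and the identity follows termwise from $\lfloor cx+ck-\gamma\rfloor=\lfloor cx-\gamma\rfloor+ck$. For the spacing of the jumps, however, you take a genuinely different route. The paper writes each jump abscissa as $\theta+\lfloor 1-\alpha\rfloor/b$ with $\theta=(D_{b/c}(\alpha)+k)/c$, observes via Remark~\ref{rem: dmap} that $\theta\in\frac{1}{s}\mathbb{Z}$, and argues by dichotomy: if the two main parts $\theta_1,\theta_2$ coincide, the difference of the abscissas is exactly $(\lfloor 1-\alpha_1\rfloor-\lfloor 1-\alpha_2\rfloor)/b$, a nonzero multiple of $1/b$; if not, they differ by at least $1/d_{\mathbf{r},\mathbf{t}}$, a gap the $O(1/b)$ shifts cannot bridge once $b\geq 2d_{\mathbf{r},\mathbf{t}}\cdot\max\{|\lfloor 1-\alpha\rfloor-\lfloor 1-\beta\rfloor|+1\}$. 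You instead multiply through by $b$, characterize the jumps by $bx\equiv\eta_i+\langle\alpha_i\rangle\pmod{b_i'}$, place all jumps on the lattice $\frac{1}{db}\mathbb{Z}$, and run a congruence argument modulo $g=\gcd(b_i',b_j')$, exploiting that $g\geq b/\lcm(c_i,c_j)$ grows linearly in $b$ while the cross-ratios $M_{ij}$ and the products $s_i's_j'$ stay bounded. Both arguments are sound and both yield an explicit threshold for ``$b$ large enough''; the paper's is shorter because the single membership $\theta\in\frac{1}{s}\mathbb{Z}$ replaces your chain of congruences, while yours makes explicit the arithmetic reason why two jumps closer than $1/b$ would force $\alpha_i\equiv\alpha_j\bmod 1$ --- a phenomenon close in spirit to what the paper isolates later in Lemma~\ref{lem: sauts}.
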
 

\begin{rem}\label{rem:Delta01}\emph{
By Lemma \ref{lem:DeltaJumps}, $\Delta_b$ is non-negative on $\mathbb{R}_{\geq 0}$ if and only if $\Delta_b$ is non-negative on $[0,1]$. In addition, when $b$ is coprime to $d_{\mathbf{r},\mathbf{t}}$, then $\Delta_b(1)=v-w$ and Assertion $\mathrm{(ii)}$ of Proposition \ref{propo: N-integrality} implies that $v\geq w$.}
\end{rem}

\begin{proof}
Let us first give a useful expression for $\Delta_b$. For all $i$ and $j$, we recall that $\alpha_i=r_i/s_i$ and $\beta_j=t_j/u_j$. We also set $c_i:=\gcd(r_i,s_i,b)$, 
$d_j:=\gcd(t_j,u_j,b)$,  and
\begin{equation}\label{eq: vbwb}
V_b:=\{1\leq i\leq v\,:\,\gcd(s_i,b)=c_i\}\quad\textup{and}\quad W_b:=\{1\leq j\leq w\,:\,\gcd(u_j,b)=d_j\} \,.
\end{equation}
We observe that $i\in V_b$ if and only if $\delta_b(r_i,s_i,\cdot)$ is not the zero function, while $j\in W_b$ if and only if $\delta_b(t_j,u_j,\cdot)$ is not the zero function. 
It follows that 
\begin{equation}\label{eq:DeltaGeneral}
\Delta_b(x) = \sum_{i\in V_b}\left\lfloor c_ix-D_{b/c_i}(\alpha_i)-\frac{\lfloor 1-\alpha_i\rfloor}{b/c_i}\right\rfloor-\sum_{j\in W_b}\left\lfloor d_jx-D_{b/d_j}(\beta_j)-\frac{\lfloor 1-\beta_j\rfloor}{b/d_j}\right\rfloor+\#V_b-\#W_b\,.
\end{equation}
Since $b/c_i$ is coprime to $d(\alpha_i)$ and $b/d_j$ is coprime to $d(\beta_j)$, we infer from Lemma \ref{lem: gamma} that 
$$
D_{b/c_i}(\alpha_i)+\frac{\lfloor 1-\alpha_i\rfloor}{b/c_i}\in(0,1] \quad\textup{and}\quad D_{b/d_j}(\beta_i)+\frac{\lfloor 1-\beta_j\rfloor}{b/d_j}\in(0,1]  \,.
$$
By Equality \eqref{eq:DeltaGeneral}, we first deduce that $\Delta_b(1)=\sum_{i\in V_b}c_i-\sum_{j\in W_b}d_j$, and then that 
\begin{align*}
\Delta_b(x+k)&=\Delta_b(x)+\sum_{i\in V_b}c_ik-\sum_{j\in W_b}d_jk\\
&=\Delta_b(x)+k\Delta_b(1)\,,
\end{align*}
for every integer $k$. This proves the first part of the lemma.  

\medskip

By \eqref{eq:DeltaGeneral}, the jumps of the step function $\Delta_b$ have abscissa of the form
\begin{equation}\label{eq:jumps}
\gamma(r,s,k):=\frac{D_{b/c}(\alpha)+k}{c}+\frac{\lfloor 1-\alpha\rfloor}{b},
\end{equation}
where $(r,s)$ belongs to $\mathbf{r}$ or $\mathbf{t}$, $\alpha=r/s$, $c=\gcd(r,s,b)$ and $k\in\mathbb{Z}$. Let $\gamma_1:=\gamma(r_1,s_1,k_1)$ and 
$\gamma_2:=\gamma(r_2,s_2,k_2)$ be two distinct abscissa of jumps as in \eqref{eq:jumps}. For $i=1$ or $2$, set $\alpha_i:=r_i/s_i$, $c_i:=\gcd(r_i,s_i,b)$ and $b_i:=b/c_i$. 
If we have
$$
\frac{D_{b_1}(\alpha_1)+k_1}{c_1}=\frac{D_{b_2}(\alpha_2)+k_2}{c_2}\,,
$$
then $\lfloor 1-\alpha_1\rfloor\neq\lfloor 1-\alpha_2\rfloor$ and 
$$
|\gamma_1-\gamma_2|=\frac{|\lfloor 1-\alpha_1\rfloor-\lfloor 1-\alpha_2\rfloor|}{b}\geq\frac{1}{b}\,,
$$
as expected. Otherwise, we get that 
$$
\left|\frac{D_{b_1}(\alpha_1)+k_1}{c_1}-\frac{D_{b_2}(\alpha_2)+k_2}{c_2}\right|\geq\frac{1}{d_{\mathbf{r},\mathbf{t}}}\,\cdot
$$
Indeed, we infer from Remark \ref{rem: dmap} that $D_{b_i}(\alpha_i)\in \frac{c_i}{s_i}\mathbb Z$, which shows that 
$$
\frac{D_{b_i}(\alpha_i)+k_i}{c_i}\in \frac{1}{s_i}\mathbb{Z}\,.
$$
Hence, for 
$
b\geq 2d_{\mathbf{r},\mathbf{t}}\cdot\max\{|\lfloor 1-\alpha\rfloor-\lfloor 1-\beta\rfloor|+1\,:\,\textup{$\alpha$ and $\beta$ in $\boldsymbol{\alpha}$ or $\boldsymbol{\beta}$}\},
$ 
we have 
$$
|\gamma_1-\gamma_2|>\frac{1}{2d_{\mathbf{r},\mathbf{t}}}  \geq \frac{1}{b}\,, 
$$
as expected. This ends the proof.
\end{proof}

\begin{proof}[Proof of Proposition \ref{propo: N-integrality}]
We first infer from \eqref{eq: iff2} and \eqref{eq: polyQalphabeta} that Assertion $\mathrm{(ii)}$ implies Assertion $\mathrm{(i)}$. 
Now, we assume that Assertion $\mathrm{(i)}$ holds and we prove Assertion $\mathrm{(ii)}$. 
By  \eqref{eq: iff2} and \eqref{eq: polyQalphabeta}, there exists a positive integer $m$ such that, for every non-negative integer $n$ and every integer $b\geq m$, 
we have $\Delta_b(n/b)\geq 0$. By Lemma~\ref{lem:DeltaJumps}, we can assume that $m$ is such that, for $b\geq m$, the distance between any two distinct jumps of $\Delta_b$ is greater than or equal to $1/b$. It follows that $\Delta_b$ is non-negative on $\mathbb{R}_{\geq 0}$ for all $b\geq m$, as wanted. 
\end{proof}

\subsection{Related criteria for negative arguments}\label{subsec: negativearguments}

It is easy to deduce from Proposition \ref{propo: N-integrality} a criterion for the $q$-integrality of the sequence $(Q_{\mathbf{r},\mathbf{t}}(q;-n))_{n\geq 0}$. 
Indeed, for every integer $n$, we have $Q_{\mathbf{r},\mathbf{t}}(q;n)=Q_{\mathbf{t}',\mathbf{r}'}(q;-n)$ (assuming that both terms are well-defined), 
where $\mathbf{r}'$ and $\mathbf{t}'$ are respectively obtained from $\mathbf{r}$ and $\mathbf{t}$ by replacing each pair $(r,s)$ in $\mathbf{r}$ or $\mathbf{t}$ by $(r-s,-s)$. 
By Lemma \ref{lem:NegativeVal}, 
for every positive integer $b$, we have 
$$
\Delta_b^{\mathbf{r},\mathbf{t}}(n/b)=\Delta_b^{\mathbf{t}',\mathbf{r}'}(-n/b)\,. 
$$
Combining Lemma \ref{lem:DeltaJumps} and Proposition \ref{propo: N-integrality},  we obtain that the following two assertions are equivalent. 
\begin{quote}
\begin{itemize}
\item[$\mathrm{(i)}$] There exists $C(q)\in\mathbb{Z}[q]\setminus\{0\}$ such that, for every $n\in\mathbb{Z}_{\leq 0}$, $C(q)^nQ_{\mathbf{r},\mathbf{t}}(q;n)$ 
belongs to $\mathbb{Z}[q^{-1},q]$.
\item[$\mathrm{(ii)}$] For all but finitely many positive integers $b$, $\Delta_b^{\mathbf{r},\mathbf{t}}$ is non-negative on $\mathbb{R}_{\leq 0}$.
\end{itemize}
\end{quote}

A natural question is then to ask whether it is possible to find a non-zero rational fraction $C(q)$ in $\mathbb{Q}(q)$ such that 
$C(q)^nQ_{\mathbf{r},\mathbf{t}}(q;n)$ is a polynomial for positive and negative $n$ simultaneously. 
The main problem is that the numerator of $C(q)$ will bring new denominators for negative $n$ and \textit{vice versa}. 
It turns out that this problem can be overcome only in the special case where $Q_{\mathbf{r},\mathbf{t}}(q;n)\in\mathbb{Z}[q^{-1},q]$ for all integers $n$.

\begin{prop}
Let us assume that $(Q_{\mathbf{r},\mathbf{t}}(q;n))_{n\in\mathbb{Z}}$ is a well-defined family. Then the three following assertions are equivalent.
\begin{itemize}
\item[$\mathrm{(i)}$] There exists $C(q)\in\mathbb{Q}[q]\setminus\{0\}$ such that, for every $n\in\mathbb{Z}$, $C(q)^nQ_{\mathbf{r},\mathbf{t}}(q;n)\in\mathbb{Z}[q^{-1},q]$.
\item[$\mathrm{(ii)}$] For every $n\in\mathbb{Z}$, $Q_{\mathbf{r},\mathbf{t}}(q;n)\in\mathbb{Z}[q^{-1},q]$.
\item[$\mathrm{(iii)}$] For every $n\in\mathbb{N}$, $Q_{\mathbf{r},\mathbf{t}}(q;n)\in\mathbb{Z}[q^{-1},q]$ and all but finitely many positive integers $b$, 
$\Delta_b^{\mathbf{r},\mathbf{t}}$ is $1$-periodic. 
\end{itemize}
\end{prop}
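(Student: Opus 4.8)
The plan is to translate all three assertions into statements about the step functions $\Delta_b:=\Delta_b^{\mathbf r,\mathbf t}$. By Corollary \ref{coro:valuation} we have $v_{\phi_b}(Q_{\mathbf r,\mathbf t}(q;n))=\Delta_b(n/b)$ whenever the term is well defined and non-zero, and by the decomposition \eqref{eq: decomposition} together with Remark \ref{rem: qq}, a ratio of such terms lies in $\mathbb Z[q^{-1},q]$ exactly when all its $\phi_b$-valuations are non-negative. So the dictionary I use throughout is: $Q_{\mathbf r,\mathbf t}(q;n)\in\mathbb Z[q^{-1},q]$ if and only if $\Delta_b(n/b)\ge 0$ for every $b\ge 1$. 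In these terms, (ii) asks for $\Delta_b(n/b)\ge 0$ for all $n\in\mathbb Z$ and all $b$, while the first half of (iii) asks the same for $n\in\mathbb N$ only. I would prove the cycle of implications via (ii)$\Leftrightarrow$(iii) and (i)$\Leftrightarrow$(ii), the implication (ii)$\Rightarrow$(i) being immediate with $C=1$.

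The step I expect to be the main obstacle, and the one that forces the ``special case'' announced before the statement, is to control the finitely many $b$ not coprime to $d_{\mathbf r,\mathbf t}$. For this I would establish that the integer sequence $b\mapsto\Delta_b(1)$ is periodic. Evaluating Definition \ref{def: delta} at $x=1$ and using Lemma \ref{lem: gamma} (which gives $\gamma\in(0,1]$) yields, after a short computation, $\delta_b(r,s,1)=\gcd(s,b)$ when $\gcd(s,b)\mid r$ and $\delta_b(r,s,1)=0$ otherwise; equivalently $\delta_b(r,s,1)=\#\{0\le k<b:\ b\mid r+ks\}$. Since both $\gcd(s,b)$ and the condition $\gcd(s,b)\mid r$ depend on $b$ only through $b\bmod |s|$, each function $b\mapsto\delta_b(r,s,1)$ is periodic of period $|s|$, hence so is
\[
\Delta_b(1)=\sum_{i=1}^v\delta_b(r_i,s_i,1)-\sum_{j=1}^w\delta_b(t_j,u_j,1),
\]
of period dividing $d_{\mathbf r,\mathbf t}=\lcm(|s_1|,\dots,|s_v|,|u_1|,\dots,|u_w|)$. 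A periodic integer sequence that vanishes for all but finitely many indices vanishes identically, since every residue class modulo the period contains arbitrarily large integers. This gives the key mechanism: if $\Delta_b(1)=0$ for all but finitely many $b$, then $\Delta_b(1)=0$ for every $b\ge 1$.

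With this lemma the remaining steps are short, using from Lemma \ref{lem:DeltaJumps} that $\Delta_b(x+k)=\Delta_b(x)+k\Delta_b(1)$ together with $\Delta_b(0)=0$, so that $\Delta_b(k)=k\Delta_b(1)$ for $k\in\mathbb Z$ and $\Delta_b$ is $1$-periodic precisely when $\Delta_b(1)=0$. For (ii)$\Rightarrow$(iii): (ii) gives $k\Delta_b(1)=\Delta_b(k)\ge 0$ for all $k\in\mathbb Z$, forcing $\Delta_b(1)=0$ for every $b$, so every $\Delta_b$ is $1$-periodic and in particular $Q_{\mathbf r,\mathbf t}(q;n)\in\mathbb Z[q^{-1},q]$ for $n\in\mathbb N$. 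For (iii)$\Rightarrow$(ii): the second half of (iii) says $\Delta_b(1)=0$ for all but finitely many $b$, so the periodicity lemma makes every $\Delta_b$ be $1$-periodic; combining $1$-periodicity with the first half of (iii), which yields $\Delta_b(m/b)\ge 0$ for $m\in\{0,\dots,b-1\}$, gives $\Delta_b(n/b)=\Delta_b((n\bmod b)/b)\ge 0$ for all $n\in\mathbb Z$, i.e.\ (ii).

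Finally, for (i)$\Rightarrow$(ii), set $e_b:=v_{\phi_b}(C)\ge 0$; as $C\in\mathbb Q[q]\setminus\{0\}$ has only finitely many irreducible factors, $e_b=0$ for all but finitely many $b$. Membership $C^nQ_{\mathbf r,\mathbf t}(q;n)\in\mathbb Z[q^{-1},q]$ forces $v_{\phi_b}(C^nQ_{\mathbf r,\mathbf t}(q;n))=ne_b+\Delta_b(n/b)\ge 0$ for every $n\in\mathbb Z$; since this quantity equals $n\bigl(e_b+\Delta_b(1)/b\bigr)$ up to a bounded term, boundedness below on all of $\mathbb Z$ forces the linear trend to vanish, that is $\Delta_b(1)=-be_b$. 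For the cofinitely many $b$ with $e_b=0$ this gives $\Delta_b(1)=0$, so by the periodicity lemma $\Delta_b(1)=0$ for every $b$, whence $e_b=0$ for all $b$. Then $ne_b+\Delta_b(n/b)=\Delta_b(n/b)\ge 0$ for all $n$ and $b$, which is exactly (ii). Thus the only genuinely delicate ingredient is the periodicity of $b\mapsto\Delta_b(1)$ and its use to promote cofinite vanishing to identical vanishing; everything else is bookkeeping with the valuation dictionary and Lemma \ref{lem:DeltaJumps}.
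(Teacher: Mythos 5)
Your proof is correct and follows essentially the same strategy as the paper's: the key mechanism in both is that $\Delta_b^{\mathbf{r},\mathbf{t}}(1)$ depends only on $b$ modulo $d_{\mathbf{r},\mathbf{t}}$, so cofinite vanishing of $\Delta_b(1)$ forces $\Delta_b(1)=0$ (hence $1$-periodicity, via Lemma \ref{lem:DeltaJumps}) for \emph{every} $b$. The only differences are cosmetic: you run the implications as $\mathrm{(i)}\Leftrightarrow\mathrm{(ii)}\Leftrightarrow\mathrm{(iii)}$ with a direct growth estimate on $ne_b+\Delta_b(n/b)$, whereas the paper proves $\mathrm{(i)}\Rightarrow\mathrm{(iii)}\Rightarrow\mathrm{(ii)}\Rightarrow\mathrm{(i)}$ by invoking the two one-sided criteria of Section \ref{sec:CriterionIntegrality}.
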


\begin{proof}
Let us first prove that $\mathrm{(i)}$ implies $\mathrm{(iii)}$. If we assume $\mathrm{(i)}$, then, by the above criteria, for every large enough positive integer $b$, 
$\Delta_b$ is non-negative on $\mathbb{R}$. By Lemma \ref{lem:DeltaJumps}, we obtain that $\Delta_b(1)=0$ and that $\Delta_b$ is $1$-periodic. 
Even for small positive integers $b$, we have 
$$
\Delta_b(1)=\sum_{i\in V_b}c_i-\sum_{j\in W_b}d_j \,,
$$
where $V_b$, $W_b$, $c_i$ and $d_j$ are defined as in \eqref{eq: vbwb}.  The latter only depend on the congruence class of $b$ modulo $d_{\mathbf{r},\mathbf{t}}$. Hence $\Delta_b(1)=\Delta_{b+ld_{\mathbf{r},\mathbf{t}}}(1)$, while $\Delta_{b+ld_{\mathbf{r},\mathbf{t}}}(1)=0$ for $l$ large enough. It follows that $\Delta_b(1)=0$ and $\Delta_b$ is $1$-periodic for every positive integer $b$. 
In particular, if $\Delta_b(n/b)<0$ for some positive integers $n$ and $b$, then there exists a negative integer $m$ such that $\Delta_b(m/b)<0$. In this case, both the 
$\phi_b$-valuation of $Q_{\mathbf{r},\mathbf{t}}(q;n)$ and $Q_{\mathbf{r},\mathbf{t}}(q;m)$ are negative, which contradicts $\mathrm{(i)}$. 
It follows that, for every $n\in\mathbb{N}$, $Q_{\mathbf{r},\mathbf{t}}(q;n)\in\mathbb{Z}[q^{-1},q]$ and $\mathrm{(iii)}$ is proved.

Now, let us prove that $\mathrm{(iii)}$ implies $\mathrm{(ii)}$. If $\mathrm{(iii)}$ holds, then, reasoning as above, we get that $\Delta_b$ is $1$-periodic 
for all positive integers $b$. For all positive integers $n$ and $b$, we have $Q_{\mathbf{r},\mathbf{t}}(q;n)\in\mathbb{Z}[q^{-1},q]$, so that $\Delta_b(n/b)\geq 0$. 
By $1$-periodicity, for all integers $n$ and $b\geq 1$, we have $\Delta_b(n/b)\geq 0$, that is $Q_{\mathbf{r},\mathbf{t}}(q;n)\in\mathbb{Z}[q^{-1},q]$, as expected.

Obviously, $\mathrm{(ii})$ implies $\mathrm{(i)}$ by choosing $C(q)=1$, which ends the proof of the proposition.
\end{proof}

\subsection{Small digression on the step function $\Delta_b^{\mathbf{r},\mathbf{t}}$ }\label{sec: dig}

In this section, we use Proposition \ref{prop Db} to simplify the expression of $\Delta_b(x)$ when $b$ is large enough. To that end we introduce some additional notation. 
We continue with the notation introduced in \eqref{eq: vbwb} and we let $\mathfrak{n}_\alpha$  be defined as in Proposition \ref{prop Db}. 
We define 
$\mathfrak{a}_{\mathbf{r},\mathbf{t}}$ as the maximum of the numbers $\mathrm{gcd}(r_i,s_i)$ and $\mathrm{gcd}(t_j,u_j)$ for all $i$ and $j$. We set
$$
\mathfrak{n}_{\mathbf{r},\mathbf{t}}:=\mathrm{max}\{\mathfrak{n}_\alpha\,:\,\alpha\textup{ in $\boldsymbol{\alpha}$ or $\boldsymbol{\beta}$}\}\quad\textup{and}\quad\mathfrak{b}_{\mathbf{r},\mathbf{t}}:=\mathfrak{a}_{\mathbf{r},\mathbf{t}}\cdot\mathfrak{n}_{\mathbf{r},\mathbf{t}}\,.
$$
Let $b\geq \mathfrak{b}_{\mathbf{r},\mathbf{t}}$ be a fixed integer. For every $i\in V_b$ and $j\in W_b$, there exist positive integers $e_i$ and $f_j$ such that 
$$
be_i\equiv c_i\mod s_i\quad\textup{and}\quad bf_j\equiv d_j\mod u_j\,.
$$ 

Now, take for example $i\in V_b$. We have
$$
\frac{b}{c_i}\geq\frac{\mathfrak{b}_{\mathbf{r},\mathbf{t}}}{c_i}\geq \frac{\mathfrak{a}_{\mathbf{r},\mathbf{t}}}{c_i}\mathfrak{n}_{\mathbf{r},\mathbf{t}}\geq\mathfrak{n}_{\alpha_i}\,.
$$
So we can apply Proposition \ref{prop Db} to obtain that $D_{b/c_i}(\alpha_i)=\langle e_i\alpha_i\rangle$ if $\alpha_i\notin\mathbb{Z}_{\leq 0}$ and $0$ otherwise. 
Let us consider a slight modification of the function $\langle\cdot\rangle$ defined for every $x\in\mathbb{R}$ by 
$$
\langle x\rangle^\ast:=\left\{\begin{array}{cl}
\{x\} & \textup{if $x\notin\mathbb{Z}$}\,,\\
1 & \textup{if $x\in\mathbb{Z}_{> 0}$}\,,\\
0 & \textup{otherwise}.
\end{array}\right.
$$
For $i\in V_b$, if $c_i< s_i$, then $e_i$ is invertible modulo $s_i/c_i$ which is a denominator of $\alpha_i$. It follows that $e_i\alpha_i\in\mathbb{Z}_{\leq 0}$ if and only if $\alpha_i\in\mathbb{Z}_{\leq 0}$. Hence, we deduce from \eqref{eq:DeltaGeneral} that, for all $b\geq\mathfrak{b}_{\mathbf{r},\mathbf{t}}$ and all $x\in\mathbb{R}$, $\Delta_b(x)$ is equal to

\begin{equation}\label{eq:DeltaSimplifiedGeneral}
\sum_{i\in V_b}\left\lfloor c_ix-\langle e_i\alpha_i\rangle^\ast-\frac{\lfloor 1-\alpha_i\rfloor}{b/c_i}\right\rfloor-\sum_{j\in W_b}\left\lfloor d_jx-\langle f_j\beta_j\rangle^\ast-\frac{\lfloor 1-\beta_j\rfloor}{b/d_j}\right\rfloor+\#V_b-\#W_b\,.
\end{equation}

Let $d_{\mathbf{r},\mathbf{t}}$ be the least common multiple of the integers $s_1,\dots,s_v,u_1,\dots,u_w$. If in addition $b$ is coprime to $d_{\mathbf{r},\mathbf{t}}$, then all the  numbers $c_i$ and $d_j$ are equal to $1$. Let $a$ in $\{1,\dots,d_{\mathbf{r},\mathbf{t}}\}$ be such that $ab\equiv 1\mod d_{\mathbf{r},\mathbf{t}}$. Then, 
for all $i$ and $j$, we can take $e_i=f_j=a$, so that
$$
\Delta_b(x)=\sum_{i=1}^v\left\lfloor x-\langle a\alpha_i\rangle^\ast-\frac{\lfloor 1-\alpha_i\rfloor}{b}\right\rfloor-\sum_{j=1}^w\left\lfloor x-\langle a\beta_j\rangle^\ast-\frac{\lfloor 1-\beta_j\rfloor}{b}\right\rfloor+v-w\,.
$$
Moreover, if all the numbers $\alpha_i$ and $\beta_j$ belong to $(0,1]$, then we have
$$
\Delta_b(x)=\sum_{i=1}^v\lfloor x-\langle a\alpha_i\rangle\rfloor-\sum_{j=1}^w\lfloor x-\langle a\beta_j\rangle\rfloor+v-w\,,
$$
which only depends on the congruence class of $b$ modulo $d_{\mathbf{r},\mathbf{t}}$.

\section{Efficient criteria for $q$-integrality of basic hypergeometric sequences}\label{sec: effi}

To verify the second assertion in Proposition \ref{propo: N-integrality} and in Corollary \ref{coro:integrality}, 
we need in principle to perform infinitely many tests, checking the non-negativity of the step function $\Delta_b^{\mathbf{r},\mathbf{t}}$ on $\mathbb R_{\geq 0}$ 
for all sufficiently large integers $b$. This is not entirely satisfactory and the aim of Theorem~\ref{thm:N-integralityPolyn} is precisely to reduce the situation to 
a finite number of similar tests. In this section, we introduce the step functions $\Xi_{\mathbf{r},\mathbf{t}}(b,\cdot)$, $b\in\{1,\ldots,d_{\mathbf{r},\mathbf{t} }\}$. 
Then we prove Theorem \ref{thm:N-integralityPolyn}.

\subsection{A generalization of Christol step functions}\label{sec:fcChristol}

Following Christol \cite{Christol}, we define a total order $\preceq$ on $\mathbb{R}$ as follows. For all real numbers $x$ and $y$, we set
$$
x\preceq y\iff(\langle x\rangle<\langle y\rangle\textup{ or }(\langle x\rangle=\langle y\rangle\textup{ and }x\geq y))\,.
$$
We refer to it  as Christol order. 
Let  $\boldsymbol{\alpha}:=(\alpha_1,\ldots,\alpha_v)$ and $\boldsymbol{\beta}:=(\beta_1,\ldots,\beta_w)$ be two vectors 
of rational numbers, and  
$$
d_{\boldsymbol{\alpha},\boldsymbol{\beta}}:=\lcm(d(\alpha_1),\ldots,d(\alpha_v),d(\beta_1),\ldots,d(\beta_w))\,.
$$ 
For every integers $a\in \{1,\ldots,d_{\boldsymbol{\alpha},\boldsymbol{\beta}}\}$ coprime to $d_{\boldsymbol{\alpha},\boldsymbol{\beta}}$, Christol defined 
the step function $\xi_{\boldsymbol{\alpha},\boldsymbol{\beta}}(a,\cdot)$ from $\mathbb R$ to $\mathbb R$ by: 
\begin{equation}\label{eq: fchris}
\xi_{\boldsymbol{\alpha},\boldsymbol{\beta}}(a,x):=\#\{i \in \{1,\ldots,v\}\,:\,a\alpha_i\preceq x\} - \#\{j \in \{1,\ldots,w\} \,:\,a\beta_j\preceq x\}\,.
\end{equation}

We recall here our notation. Let $v$ and $w$ be positive integers, and for $i\in\{1,\dots,v\}$ and $j\in\{1,\dots,w\}$, 
let $(r_i,s_i)$ and $(t_j,u_j)$ be pairs of integers such that $s_iu_j\not=0$ for all $(i,j)$. Set $\alpha_i:=r_i/s_i$, $\beta_j:=t_j/u_j$, 
$\mathbf{r}:=((r_1,s_1),\dots,(r_v,s_v))$, $\mathbf{t}:=((t_1,u_1),\dots,(t_w,u_w))$, 
$\boldsymbol{\alpha}:=(\alpha_1,\dots,\alpha_v)$, $\boldsymbol{\beta}:=(\beta_1,\dots,\beta_w)$, and 
$d_{\mathbf{r},\mathbf{t}}:=\lcm(s_1,\dots,s_v,u_1,\dots,u_w)$.  

\medskip 

For every $b\in\{1,\dots,d_{\mathbf{r},\mathbf{t}}\}$, we define the step function $\Xi_{\mathbf{r},\mathbf{t}}(b,\cdot)$ as follows. 
For all $i\in\{1,\dots,v\}$ and $j\in\{1,\dots,w\}$, we set $c_i:=\gcd(r_i,s_i,b)$ and $d_j:=\gcd(t_j,u_j,b)$. We consider, as in \eqref{eq: vbwb}, the sets of indices
$$
V_b:=\{1\leq i\leq v\,:\,\gcd(s_i,b)=c_i\}\quad\textup{and}\quad W_b:=\{1\leq j\leq w\,:\,\gcd(u_j,b)=d_j\}\,. 
$$
As we already observed in Section \ref{sec: dig}, for every $i\in V_b$ and $j\in W_b$, there exist positive integers $e_i$ and $f_j$ such that 
$$
be_i\equiv c_i\mod s_i\quad\textup{and}\quad bf_j\equiv d_j\mod u_j\,.
$$
For all $i,j$, we choose such integers $e_i$ and $f_j$. We stress that the definition of $\Xi_{\mathbf{r},\mathbf{t}}(b,\cdot)$ (see Definition \ref{def: gchris}) 
does not depend on this choice. 
Let $\tilde{b}$ be the greatest divisor of $b$ coprime to $d_{\mathbf{r},\mathbf{t}}$ and let $a$ be the unique element of $\{1,\dots,d_{\mathbf{r},\mathbf{t}}\}$ satisfying 
$a\tilde{b}\equiv 1\mod d_{\mathbf{r},\mathbf{t}}$.

\begin{defn}\label{def: gchris}
For every integer $b$ in $\{1,\ldots,d_{\mathbf{r},\mathbf{t} }\}$, we define the step function 
$\Xi_{\mathbf{r},\mathbf{t}}(b,\cdot): \mathbb R\to\mathbb R$ by: 
\begin{align*}
\Xi_{\mathbf{r},\mathbf{t}}(b,x):=&\#\left\{(i,k)\in V_b\times \{0,\ldots,c_i-1\}\,:\, \frac{\langle e_i\alpha_i\rangle+k}{c_i}-\lfloor 1-a\alpha_i\rfloor\  \preceq x\right\}\\
&-
\#\left\{(j,\ell)\in W_b\times \{0,\ldots,d_j-1\}\,:\, \frac{\langle f_j\beta_j\rangle+\ell}{d_j}-\lfloor 1-a\beta_j\rfloor \preceq x\right\}\,.
\end{align*}
\end{defn}

\subsection{Comparison with the step functions $\xi_{\boldsymbol{\alpha},\boldsymbol{\beta}}(a,\cdot)$ and $\Delta_b^{\mathbf{r},\mathbf{t}}$}

The functions $\Xi_{\mathbf{r},\mathbf{t}}(b,\cdot)$ can be thought of as a generalization of the functions $\xi_{\boldsymbol{\alpha},\boldsymbol{\beta}}(a,\cdot)$ 
to composite numbers $b$. 
Indeed, if we assume that $b$ is coprime to $d_{\mathbf{r},\mathbf{t}}$ and that all the ratios $\alpha_i=r_i/s_i$ and $\beta_j=t_j/u_j$ 
belong to $\mathbb Q\setminus\mathbb Z_{\leq0}$, we claim that $\Xi_{\mathbf{r},\mathbf{t}}(b,\cdot)=\xi_{\boldsymbol{\alpha},\boldsymbol{\beta}}(a,\cdot)$ where $ab\equiv 1\mod d_{\mathbf{r},\mathbf{t}}$. 

Let us prove this claim. 
If $b$ is coprime to $d_{\mathbf{r},\mathbf{t}}$,  then $b=\tilde{b}$, all the numbers $c_i$ and $d_j$ are equal to $1$, $V_b=\{1,\ldots v\}$, $W_b=\{1,\ldots,w\}$.
Hence, for all $i$ and $j$, we can choose $e_i=f_j=a$. 
Moreover, for all $(i,k)\in V_b\times\{0,\ldots,c_i-1\}$, we have  $k=0$. We obtain that 
$$
\frac{\langle e_i\alpha_i\rangle+k}{c_i}-\lfloor 1-a\alpha_i\rfloor=\langle a\alpha_i\rangle-\lfloor 1-a\alpha_i\rfloor=a\alpha_i \,.
$$
Similarly, for all $(j,\ell)\in W_b\times\{0,\ldots,d_j-1\}$, we have
$$
\frac{\langle f_j\beta_j\rangle+\ell}{d_j}-\lfloor 1-a\beta_j\rfloor=a\beta_j\,.
$$  
By  \eqref{eq: fchris},  we get that 
\begin{align*}
\Xi_{\mathbf{r},\mathbf{t}}(b,x)&=  \#\left\{(i,k)\in \{1,\ldots,v\}\times \{0\} : a\alpha_i \preceq x\right\} -  \#\left\{(j,\ell)\in \{1,\ldots,w\}\times\{0\} : a\beta_j \preceq x\right\} \\
&= \#\left\{i\in\{1,\ldots,v\} : a\alpha_i \preceq x\right\} -  \#\left\{j\in\{1,\ldots,w\} : a\beta_j \preceq x\right\} \\
&= \xi_{\boldsymbol{\alpha},\boldsymbol{\beta}}(a,x)\,.
\end{align*}

\medskip

Let us now compare the step functions $\Xi_{\mathbf{r},\mathbf{t}}(b,\cdot)$ and $\Delta_b^{\mathbf{r},\mathbf{t}}$.  
Using Equality \eqref{eq:DeltaGeneral}, we can give a new expression for $\Delta_b^{\mathbf{r},\mathbf{t}}$ (restricted on $[0,1]$) 
which  is closer to the definition of the step function 
$\Xi_{\mathbf{r},\mathbf{t}}(b,\cdot)$. Indeed, for every positive integer $b$ and every real number $x$ in $[0,1]$, we get that 
\begin{align}\label{eq: deltanew}
\Delta_b^{\mathbf{r},\mathbf{t}}(x)= &\#\left\{(i,k)\in V_b\times \{0,\ldots,c_i-1\}\,:\, \frac{D_{b/c_i}(\alpha_i)+k}{c_i}+\frac{\lfloor 1-\alpha_i\rfloor}{b} \leq x\right\}\\
&-
\#\left\{(j,\ell)\in W_b\times \{0,\ldots,d_j-1\}\,:\, \frac{D_{b/d_j}(\beta_i)+\ell}{d_j}+\frac{\lfloor 1-\beta_j\rfloor}{b} \leq x\right\}\,.\nonumber
\end{align}

\subsection{Ordering of jumps}

The interest of the step-functions $\Xi_{\mathbf{r},\mathbf{t}}(b,\cdot)$ is that they keep track of all jumps configurations of the Landau 
functions $\Delta_\ell^{\mathbf{r},\mathbf{t}}$ for large $\ell$ congruent to $b$ modulo $d_{\mathbf{r},\mathbf{t}}$. More precisely, we have the following result.

\begin{lem}\label{lem: sauts}
For every $i\in\{1,2\}$, we let $r_i$ and $s_i$ be integers with $s_i\neq 0$ and such that $\alpha_i:=r_i/s_i\notin\mathbb{Z}_{\leq 0}$. 
Set $d:=\mathrm{lcm}(s_1,s_2)$ and let $b$ be an integer such that 
$$
b> \max\big(|r_1|,|r_2|,d\cdot|\lfloor 1-\alpha_1\rfloor-\lfloor 1-\alpha_2\rfloor|\big) \,.
$$ 
Set $c_i:=\gcd(r_i,s_i,b)$ and let us assume that there exists an integer $e_i$, $1\leq e_i\leq d$, such that $be_i\equiv c_i\mod s_i$. 
Let $k_i$ be an integer in $\{0,\dots, c_i-1\}$ and $a$ be a positive integer. Set 
$$
\gamma_i:=\frac{D_{b/c_i}(\alpha_i)+k_i}{c_i}+\frac{\lfloor 1-\alpha_i\rfloor}{b}\quad\textup{and}\quad\Gamma_i:=\frac{\langle e_i\alpha_i\rangle+k_i}{c_i}-\lfloor 1-a\alpha_i\rfloor\,.
$$
Then we have 
$$
\gamma_1\leq\gamma_2\iff\Gamma_1\preceq\Gamma_2\,.
$$
Furthermore, if $\Gamma_1=\Gamma_2$, then $\alpha_1=\alpha_2$.
\end{lem}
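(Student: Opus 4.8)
The plan is to reduce both $\gamma_i$ and $\Gamma_i$ to a single common quantity
$$
\theta_i:=\frac{\langle e_i\alpha_i\rangle+k_i}{c_i}\,,
$$
after first replacing $D_{b/c_i}(\alpha_i)$ by $\langle e_i\alpha_i\rangle$ in $\gamma_i$. The first step justifies this replacement via Proposition \ref{prop Db}. Dividing $be_i\equiv c_i\bmod s_i$ by $c_i$ gives $(b/c_i)e_i\equiv 1\bmod(s_i/c_i)$, and since $d(\alpha_i)$ divides $s_i/c_i$ we get $(b/c_i)e_i\equiv 1\bmod d(\alpha_i)$; in particular $\gcd(b/c_i,d(\alpha_i))=1$, so $\alpha_i\in S_{b/c_i}^{-1}\mathbb{Z}$. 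I would then verify $b/c_i\geq\mathfrak{n}_{\alpha_i}$: as $c_i=\gcd(r_i,s_i,b)$ divides both $r_i$ and $b$, the smallest multiple of $c_i$ exceeding $|r_i|$ is $|r_i|+c_i$, so $b>|r_i|$ forces $b\geq|r_i|+c_i$ and $b/c_i\geq|r_i|/c_i+1\geq\mathfrak{n}_{\alpha_i}$ (treating $\alpha_i>0$ and $\alpha_i<0$ separately). Applying Proposition \ref{prop Db} with $a=e_i$ and using $\alpha_i\notin\mathbb{Z}_{\leq 0}$ yields $D_{b/c_i}(\alpha_i)=\langle e_i\alpha_i\rangle$, whence $\gamma_i=\theta_i+\frac{\lfloor 1-\alpha_i\rfloor}{b}$ and $\Gamma_i=\theta_i-\lfloor 1-a\alpha_i\rfloor$.

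Next I would record two structural facts about $\theta_i$. Since $\langle e_i\alpha_i\rangle\in(0,1]$ and $0\leq k_i\leq c_i-1$, one has $\theta_i\in(0,1]$, so $\langle\Gamma_i\rangle=\theta_i$ (the integer $\lfloor 1-a\alpha_i\rfloor$ does not affect $\langle\cdot\rangle$). Moreover $\langle e_i\alpha_i\rangle\in\frac{1}{d(\alpha_i)}\mathbb{Z}$ and $c_id(\alpha_i)$ divides $s_i$, hence $d$, so $\theta_i\in\frac1d\mathbb{Z}$; thus two distinct values of $\theta_i$ differ by at least $1/d$. Finally, writing $\eta_i:=(b/c_i)D_{b/c_i}(\alpha_i)-\alpha_i\in\{0,\dots,b/c_i-1\}$ and $m_i:=\eta_i+(b/c_i)k_i\in\mathbb{Z}$, a direct computation gives $\theta_i=(\alpha_i+m_i)/b$. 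In particular $\theta_1=\theta_2$ forces $\alpha_1-\alpha_2=m_2-m_1=:N\in\mathbb{Z}$.

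The argument then splits according to the sign of $\theta_1-\theta_2$. Rewriting $\gamma_1\leq\gamma_2$ as $\theta_1-\theta_2\leq(\lfloor 1-\alpha_2\rfloor-\lfloor 1-\alpha_1\rfloor)/b$, the hypothesis $b>d\,|\lfloor 1-\alpha_1\rfloor-\lfloor 1-\alpha_2\rfloor|$ makes the right-hand side lie in $(-1/d,1/d)$. If $\theta_1<\theta_2$ then $\theta_1-\theta_2\leq-1/d$ lies below this range, so $\gamma_1\leq\gamma_2$ holds, while $\langle\Gamma_1\rangle=\theta_1<\theta_2=\langle\Gamma_2\rangle$ gives $\Gamma_1\preceq\Gamma_2$; the case $\theta_1>\theta_2$ is symmetric, both statements then failing. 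In the remaining case $\theta_1=\theta_2$, the inequality $\gamma_1\leq\gamma_2$ reduces to $\lfloor 1-\alpha_1\rfloor\leq\lfloor 1-\alpha_2\rfloor$, while, since $\langle\Gamma_1\rangle=\langle\Gamma_2\rangle$, the relation $\Gamma_1\preceq\Gamma_2$ unfolds (using $x\preceq y\iff x\geq y$ when $\langle x\rangle=\langle y\rangle$) to $\lfloor 1-a\alpha_1\rfloor\leq\lfloor 1-a\alpha_2\rfloor$. Substituting $\alpha_1=\alpha_2+N$ gives $\lfloor 1-\alpha_1\rfloor=\lfloor 1-\alpha_2\rfloor-N$ and $\lfloor 1-a\alpha_1\rfloor=\lfloor 1-a\alpha_2\rfloor-aN$, so both inequalities are equivalent to $N\geq 0$ (as $a>0$), hence to each other.

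For the final assertion, $\Gamma_1=\Gamma_2$ forces $\langle\Gamma_1\rangle=\langle\Gamma_2\rangle$, i.e. $\theta_1=\theta_2$, so $\alpha_1-\alpha_2=N\in\mathbb{Z}$; equality of the $\Gamma_i$ then gives $\lfloor 1-a\alpha_1\rfloor=\lfloor 1-a\alpha_2\rfloor$, that is $aN=0$, whence $N=0$ and $\alpha_1=\alpha_2$. I expect the main obstacle to be precisely the case $\theta_1=\theta_2$: the floor corrections in $\gamma_i$ (namely $\lfloor 1-\alpha_i\rfloor$) and in $\Gamma_i$ (namely $\lfloor 1-a\alpha_i\rfloor$) are genuinely different, so the equivalence is not formal; it is exactly the identity $\alpha_1=\alpha_2+N$ forced by $\theta_1=\theta_2$ that collapses both comparisons to the single condition $N\geq 0$. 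The preliminary reduction $D_{b/c_i}(\alpha_i)=\langle e_i\alpha_i\rangle$, and in particular extracting $b/c_i\geq\mathfrak{n}_{\alpha_i}$ from $b>|r_i|$ together with $c_i\mid r_i$ and $c_i\mid b$, is the other point requiring care.
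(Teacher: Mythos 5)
Your proof is correct and follows essentially the same route as the paper's: reduce both quantities to $\theta_i=(\langle e_i\alpha_i\rangle+k_i)/c_i$ via Proposition~\ref{prop Db} (checking $b/c_i\geq\mathfrak{n}_{\alpha_i}$ exactly as the paper does), use $\theta_i\in(0,1]\cap\frac{1}{d}\mathbb{Z}$ together with the lower bound on $b$ to settle the case $\theta_1\neq\theta_2$, and in the case $\theta_1=\theta_2$ exploit that $\alpha_1$ and $\alpha_2$ differ by an integer to match the two floor comparisons. The only minor divergence is that you obtain $\alpha_1-\alpha_2\in\mathbb{Z}$ directly from the identity $\theta_i=(\alpha_i+m_i)/b$, whereas the paper derives the equivalent fact $\langle\alpha_1\rangle=\langle\alpha_2\rangle$ from $\langle b_i\langle e_i\alpha_i\rangle\rangle=\langle\alpha_i\rangle$ and then uses $\lfloor 1-a\alpha_i\rfloor=\lfloor 1-a\langle\alpha_i\rangle\rfloor+a\lfloor 1-\alpha_i\rfloor$; both steps do the same work.
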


\begin{rem}\emph{
Contrary to what   the notation of Lemma \ref{lem: sauts} may suggest, we stress that the latter applies to compare the ordering of both the jumps with positive and negative amplitude of the step functions 
$\Xi_{\mathbf{r},\mathbf{t}}(b,\cdot)$ and $\Delta_b^{\mathbf{r},\mathbf{t}}$. }
\end{rem}

Even when $b\geq \mathfrak{b}_{\mathbf{r},\mathbf{t}}$, Formula \eqref{eq:DeltaSimplifiedGeneral} shows that the Landau functions $\Delta_b^{\mathbf{r},\mathbf{t}}$ 
depend in principle on $b$ and not only on the congruence class of $b$ modulo $d_{\mathbf{r},\mathbf{t}}$. In contrast, Lemma~\ref{lem: sauts} shows that for sufficiently large $b$, the $\leq$-ordering of the jumps of $\Delta_b^{\mathbf{r},\mathbf{t}}$ on $[0,1]$ is the same as the $\preceq$-ordering of that of 
$\Xi_{\mathbf{r},\mathbf{t}}(\underline{b},\cdot)$ on $\mathbb{R}$, where $\underline{b}$ is the unique representative in $\{1,\dots,d_{\mathbf{r},\mathbf{t}}\}$ of $b$ modulo $d_{\mathbf{r},\mathbf{t}}$. In particular, this ordering only depends on the congruence class of 
$b$ modulo $d_{\mathbf{r},\mathbf{t}}$.

Furthermore, Lemma \ref{lem: sauts} shows that if two jumps of $\Xi_{\mathbf{r},\mathbf{t}}(b,\cdot)$, respectively associated with the pairs $(r_1,s_1)$ and $(r_2,s_2)$, have the same abscissa, 
then we must have $r_1/s_1=r_2/s_2$. However, these pairs can still be distinct. Indeed, taking for example the pairs $(r_1,s_1)=(1,4)$ and 
$(r_2,s_2)=(3,12)$, and $b=9$, we find that $d=12$ and $\tilde{b}=1$, so that $a=1$,  $c_1=1$, $c_2=3$, $e_1=1$, and $e_2=3$. 
Hence taking $k_1=k_2=0$ yields
$$
\Gamma_1=\langle 1/4\rangle=\frac{1}{4}\quad\textup{and}\quad\Gamma_2=\frac{\langle 3/4\rangle}{3}=\frac{1}{4}\,\cdot
$$

\begin{proof}[Proof of Lemma \ref{lem: sauts}]
For $i\in\{1,2\}$, we set $b_i:=b/c_i$ and 
$$
\theta_i:=\frac{D_{b_i}(\alpha_i)+k_i}{c_i}\,\cdot
$$
Since $b>|r_i|$ and $c_i$ divides both $r_i$ and $s_i$, we have $b_i>|n(\alpha_i)|$ and hence $b_i\geq\mathfrak{n}_{\alpha_i}$. 
By Proposition~\ref{prop Db}, we have $D_{b_i}(\alpha_i)=\langle e_i\alpha_i\rangle$ for $\alpha_i\notin\mathbb{Z}_{\leq 0}$, so that 
\begin{equation}\label{eq: thetai}
\theta_i=\frac{\langle e_i\alpha_i\rangle+k_i}{c_i}\,\cdot
\end{equation}
Note that $\theta_i\in \frac{1}{d}\mathbb Z$. Indeed, $c_i$ divides $\mathrm{gcd}(r_i,s_i)$ so that $\alpha_i/c_i\in\frac{1}{s_i}\mathbb Z$, while $d$ is a multiple of $s_i$. 
Now, we show that
\begin{equation}\label{eq:FromThetatoAlpha}
\theta_1=\theta_2\Rightarrow\langle\alpha_1\rangle=\langle\alpha_2\rangle\, .
\end{equation}
Setting $s_i':=s_i/c_i$, for $i\in\{1,2\}$, we get that $b_ie_i\equiv 1\mod s_i'$ and $\alpha_i\in \frac{1}{s_i'}\mathbb Z$. We obtain that
$$
\langle b_i\langle e_i\alpha_i\rangle\rangle=\langle\alpha_i\rangle\,.
$$
Thereby, we obtain that
\begin{align*}
\theta_1=\theta_2&\Rightarrow b\theta_1=b\theta_2\\
&\Rightarrow b_1\langle e_1\alpha_1\rangle+b_1k_1=b_2\langle e_2\alpha_2\rangle+b_2k_2\\
&\Rightarrow \langle b_1\langle e_1\alpha_1\rangle\rangle=\langle b_2\langle e_2\alpha_2\rangle\rangle\\
&\Rightarrow\langle\alpha_1\rangle=\langle\alpha_2\rangle\,,
\end{align*}
which proves \eqref{eq:FromThetatoAlpha}. Furthermore, since $\alpha=\langle\alpha\rangle-\lfloor 1-\alpha\rfloor$, we have
\begin{align*}
\lfloor 1-a\alpha_i\rfloor&=\lfloor 1-a\langle\alpha_i\rangle+a\lfloor 1-\alpha_i\rfloor\rfloor\\
&=\lfloor 1-a\langle\alpha_i\rangle\rfloor+a\lfloor 1-\alpha_i\rfloor\,.
\end{align*}
If $\theta_1=\theta_2$, then we have $\langle\alpha_1\rangle=\langle\alpha_2\rangle$ and
\begin{equation}\label{eq: aalpha}
\lfloor 1-a\alpha_1\rfloor\geq\lfloor 1-a\alpha_2\rfloor\iff\lfloor 1-\alpha_1\rfloor\geq\lfloor 1-\alpha_2\rfloor\,,
\end{equation}
for $a$ is a positive integer. Since $\theta_i$ belongs to $\frac{1}{d}\mathbb{Z}$ and $b> d\cdot|\lfloor 1-\alpha_1\rfloor-\lfloor 1-\alpha_2\rfloor|$, we obtain the following equivalences:
\begin{align*}
\gamma_1\leq \gamma_2
& \iff\theta_1+\frac{\lfloor 1-\alpha_1\rfloor}{b}\leq\theta_2+\frac{\lfloor 1-\alpha_2\rfloor}{b}\\
& \iff\theta_1-\theta_2\leq\frac{\lfloor 1-\alpha_2\rfloor-\lfloor 1-\alpha_1\rfloor}{b}\\
& \iff \theta_1<\theta_2\textup{ or }(\theta_1=\theta_2\textup{ and }\lfloor 1-\alpha_1\rfloor\leq\lfloor 1-\alpha_2\rfloor)\\
& \iff \theta_1<\theta_2\textup{ or }(\theta_1=\theta_2\textup{ and }\lfloor 1-a\alpha_1\rfloor\leq\lfloor 1-a\alpha_2\rfloor)\\
& \iff\theta_1-\lfloor 1-a\alpha_1\rfloor\preceq\theta_2-\lfloor 1-a\alpha_2\rfloor\\
&\iff \Gamma_1\preceq\Gamma_2\,.
\end{align*}
Indeed, we have $\theta_i\in (0,1]$, which implies that $\langle\theta_i-\lfloor 1-a\alpha_i\rfloor\rangle=\theta_i$, while \eqref{eq: thetai} 
implies that $\theta_i-\lfloor 1-a\alpha_i\rfloor=\Gamma_i$. 
This proves the first part of the proposition. 

\medskip

Now, assume that $\Gamma_1=\Gamma_2$ so that $\theta_1-\lfloor 1-a\alpha_1\rfloor=\theta_2-\lfloor 1-a\alpha_2\rfloor$. Since $\theta_i\in (0,1]$, it follows that 
$\theta_1=\theta_2$. Hence $\langle\alpha_1\rangle=\langle\alpha_2\rangle$ by \eqref{eq:FromThetatoAlpha}. We obtain that 
$\lfloor 1-a\alpha_1\rfloor=\lfloor 1-a\alpha_2\rfloor$ and \eqref{eq: aalpha} implies that $\lfloor 1-\alpha_1\rfloor=\lfloor 1-\alpha_2\rfloor$. Since $\alpha_i=\langle\alpha_i\rangle-\lfloor 1-\alpha_i\rfloor$, we get $\alpha_1=\alpha_2$, as expected. This ends the proof. 
\end{proof}

\subsection{Efficient criteria for $q$-integrality and proof of Theorem~\ref{thm:N-integralityPolyn} }\label{sec:Efficient}

We are now ready to prove Theorem~\ref{thm:N-integralityPolyn}. 
The last missing ingredient is the following lemma.

\begin{lem}\label{lem: deltaxi} 
Let $\mathbf{r}=((r_1,s_1),\ldots,(r_v,s_v))$ and $\mathbf{t}=((t_1,u_1),\ldots,(t_w,u_w))$ be two vectors with integer coordinates such that, for all $(i,j)$, $s_iu_j\neq 0$ and the ratios $r_i/s_i$ and $t_j/u_j$ do not belong to $\mathbb{Z}_{\leq 0}$. Then the two following assertions are equivalent. 
\begin{itemize}
\item[$\mathrm{(i)}$] For all but finitely many $b$, $\Delta_b^{\mathbf{r},\mathbf{t}}$ is non-negative on $\mathbb R_{\geq 0}$. 
\item[$\mathrm{(ii)}$] For every $b\in\{1,\dots,d_{\mathbf{r},\mathbf{t}}\}$ and all $x\in\mathbb{R}$, we have $\Xi_{\mathbf{r},\mathbf{t}}(b,x)\geq 0$.
\end{itemize}
\end{lem}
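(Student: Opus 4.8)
The plan is to read both $\Delta_b^{\mathbf r,\mathbf t}$ (restricted to $[0,1]$) and $\Xi_{\mathbf r,\mathbf t}(b,\cdot)$ as \emph{signed jump-counting step functions} built from one and the same index set, and to reduce their non-negativity to a statement about partial sums of jump amplitudes that depends only on the order type of the jumps. Concretely, by \eqref{eq: deltanew} the function $\Delta_b^{\mathbf r,\mathbf t}$ counts on $[0,1]$, with sign $+1$, the pairs $(i,k)\in V_b\times\{0,\dots,c_i-1\}$ whose abscissa $\gamma_i^{(k)}:=\frac{D_{b/c_i}(\alpha_i)+k}{c_i}+\frac{\lfloor1-\alpha_i\rfloor}{b}$ satisfies $\gamma_i^{(k)}\le x$, and with sign $-1$ the analogous pairs $(j,\ell)\in W_b\times\{0,\dots,d_j-1\}$; by Definition \ref{def: gchris}, $\Xi_{\mathbf r,\mathbf t}(b,\cdot)$ counts exactly the same pairs with exactly the same signs, but now the abscissa is $\Gamma_i^{(k)}:=\frac{\langle e_i\alpha_i\rangle+k}{c_i}-\lfloor1-a\alpha_i\rfloor$ and the comparison uses the Christol order $\preceq$. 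Both functions equal $0$ before their first jump: for $\Delta_b$ because each $\gamma_i^{(k)}=(\gamma+k)/c_i$ lies in $(0,1]$ by Lemma \ref{lem: gamma}, whence $\Delta_b^{\mathbf r,\mathbf t}(0)=0$; for $\Xi$ because each jump abscissa $\Gamma$ has $\langle\Gamma\rangle\in(0,1]$, so $\Xi_{\mathbf r,\mathbf t}(b,x)=0$ whenever $\langle x\rangle$ is below all these fractional parts. Finally, by Remark \ref{rem:Delta01} (through Lemma \ref{lem:DeltaJumps}), non-negativity of $\Delta_b^{\mathbf r,\mathbf t}$ on $\mathbb R_{\ge0}$ is equivalent to non-negativity on $[0,1]$, so I will only work on $[0,1]$.

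The combinatorial core I would isolate is the following. Group the jumps of such a function by equal abscissa (equality of reals for $\Delta_b$, equality for $\preceq$, i.e. again equality of reals, for $\Xi$). The function is constant between consecutive groups, and since the count is taken with $\le$ (resp. $\preceq$), its value just after the $\ell$-th group is the partial sum $A_1+\cdots+A_\ell$ of the \emph{net} amplitudes $A_\ell$ of the groups, with initial value $0$. Hence the set of attained values is exactly $\{0,A_1,A_1+A_2,\dots\}$, so the function is non-negative if and only if all these partial sums are $\ge0$; in particular non-negativity depends only on the ordered sequence of net amplitudes, that is, only on the order type (with signs and multiplicities) of the jumps.

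It then remains to match the two order types. For $b$ sufficiently large with $b\equiv\underline b\bmod d_{\mathbf r,\mathbf t}$, the sets $V_b,W_b$ and the integers $c_i,d_j$ depend only on $\underline b$, and Lemma \ref{lem: sauts}, applied to each pair of jumps (together with the discussion following it), yields $\gamma_1\le\gamma_2\iff\Gamma_1\preceq\Gamma_2$ and $\Gamma_1=\Gamma_2\iff\gamma_1=\gamma_2$. Thus the $\le$-ordered jumps of $\Delta_b^{\mathbf r,\mathbf t}$ on $[0,1]$ and the $\preceq$-ordered jumps of $\Xi_{\mathbf r,\mathbf t}(\underline b,\cdot)$ have the same grouping and the same sequence of net amplitudes, so by the previous paragraph they are simultaneously non-negative. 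Both implications follow. For $\mathrm{(ii)}\Rightarrow\mathrm{(i)}$: assuming every $\Xi_{\mathbf r,\mathbf t}(b,\cdot)$, $b\in\{1,\dots,d_{\mathbf r,\mathbf t}\}$, is non-negative, each sufficiently large $b$ gives $\Delta_b^{\mathbf r,\mathbf t}\ge0$ on $[0,1]$, hence on $\mathbb R_{\ge0}$, which is $\mathrm{(i)}$. For $\mathrm{(i)}\Rightarrow\mathrm{(ii)}$: fix $\underline b\in\{1,\dots,d_{\mathbf r,\mathbf t}\}$ and pick $b$ large in this residue class outside the finite exceptional set of $\mathrm{(i)}$; then $\Delta_b^{\mathbf r,\mathbf t}\ge0$ on $[0,1]$ forces $\Xi_{\mathbf r,\mathbf t}(\underline b,\cdot)\ge0$, and $\underline b$ being arbitrary gives $\mathrm{(ii)}$.

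The main obstacle I expect is the bookkeeping in the combinatorial step: one must argue carefully that coincident jumps—possibly of opposite sign—are applied simultaneously, so only their net amplitude is ever visible in the value, and that the two functions share \emph{exactly} the same coincidences. This is precisely where the equivalence $\Gamma_1=\Gamma_2\iff\gamma_1=\gamma_2$ (and the implication $\Gamma_1=\Gamma_2\Rightarrow\alpha_1=\alpha_2$) from Lemma \ref{lem: sauts} is indispensable, since it guarantees that the two groupings, and hence the two sequences of partial sums, are identical.
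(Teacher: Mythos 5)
Your proposal is correct and follows essentially the same route as the paper's proof: both reduce to $[0,1]$ via Lemma \ref{lem:DeltaJumps}, represent $\Delta_b^{\mathbf r,\mathbf t}$ and $\Xi_{\mathbf r,\mathbf t}(\underline b,\cdot)$ as signed counts over the multisets of jump abscissas, and invoke Lemma \ref{lem: sauts} to identify the two ordered sequences of net amplitudes, so that the sets of attained values (the partial sums together with $0$) coincide. The paper phrases this as the equality $\Delta_b([0,1])=\Xi(\underline b,\mathbb R)$ of value sets, which is exactly your partial-sum argument.
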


\begin{proof}
We write $\Delta_b$ and $\Xi(b,\cdot)$ as respective short-hands for $\Delta_b^{\mathbf{r},\mathbf{t}}$ and $\Xi_{\mathbf{r},\mathbf{t}}(b,\cdot)$.

If $b$ is large enough, then we infer from \eqref{eq: deltanew} that $\Delta_b$ is a step function whose jumps on $[0,1]$ are precisely located 
at rationals of the form
$$
\gamma(r,s,k):=\frac{D_{b/c}(\alpha)+k}{c}+\frac{\lfloor 1-\alpha\rfloor}{b}\,,
$$
where $(r,s)$ belongs either to $\mathbf{r}$ or to $\mathbf{t}$, $\alpha=r/s$, $c=\gcd(r,s,b)$ and $k\in\{0,\dots,c-1\}$. 
More precisely,  $\Delta_b$ has a jump of positive amplitude at each element of the multiset  
$$
\mathfrak J_{b}^+:=\left\{\!\!\left\{\frac{D_{b/c_i}(\alpha_i)+k}{c_i}+\frac{\lfloor 1-\alpha_i\rfloor}{b}\,:\,i\in V_b,\,0\leq k\leq c_i-1\right\}\!\!\right\} \,.
$$
The amplitude of such a jump is equal to the multiplicity of the corresponding element in $\mathfrak J_{b}^+$. Similarly, 
$\Delta_b$ has a jump of negative amplitude at each element of the multiset  
$$
\mathfrak J_b^-:= \left\{\!\!\left\{\frac{D_{b/d_j}(\beta_j)+\ell}{d_j}+\frac{\lfloor 1-\beta_j\rfloor}{b}\,:\,j\in W_b,\,0\leq \ell\leq d_j-1\right\}\!\!\right\}\,,
$$
and the amplitude of such a jump is equal to the multiplicity of the corresponding element in $\mathfrak J_{b}^-$. 
By Lemma \ref{lem: gamma}, the supports of these multisets are included in $(0,1]$. 
Let  
$$
0<\gamma_1<\cdots<\gamma_\mu\leq 1
$$ 
denote the elements of the support of the multiset 
$\mathfrak J_b:=\mathfrak J_b^+\cup \mathfrak J_b^-$. We let $m_i^+$ (resp.\ $m_i^-$) denote the multiplicity of 
$\gamma_i$ in $\mathfrak J_{b}^+$ (resp.\ in $\mathfrak J_{b}^-$), and we set $m_i:=m_i^+-m_i^-$.  
Let $x\in [0,1]$ and set $\nu:=\sup\{i\in\{1,\ldots,\mu\} : \gamma_i\leq x\}$ with the convention $\sup(\emptyset)=-\infty$. Then, while setting $\gamma_{-\infty}:=0$, we obtain that 
$$\Delta_b(x)=\Delta_b(\gamma_\nu)=
\left\{\begin{array}{cl}
m_1+\cdots+m_{\nu} & \textup{if $\nu\geq 1$\,,}\\
0 & \textup{if $\nu=-\infty$\,.}
\end{array}\right.
$$

On the other hand, let $\underline b$ denote the  unique representative of $b$ in $\{1,\ldots,d_{\mathbf{r},\mathbf{t}}\}$ modulo $d_{\mathbf{r},\mathbf{t}}$ and let us consider the multisets 
$$
\mathcal J_{\underline b}^+:= \left\{\!\!\left\{\frac{\langle e_i\alpha_i\rangle+k}{c_i}-\lfloor 1-a\alpha_i\rfloor\,:\,i\in V_{\underline b},\,0\leq k\leq c_i-1\right\}\!\!\right\}
$$
and
$$
\mathcal J_{\underline b}^-:= \left\{\!\!\left\{\frac{\langle f_j\beta_j\rangle+\ell}{d_j}-\lfloor 1-a\beta_j\rfloor\,:\,j\in W_{\underline b},\,0\leq \ell\leq d_j-1\right\}\!\!\right\}\,.
$$
By Lemma \ref{lem: sauts}, 
the support of  $\mathcal J_{\underline b}:=\mathcal J_{\underline b}^+\cup \mathcal J_{\underline b}^-$ has also  cardinality $\mu$. 
Let 
$$
\Gamma_1\prec \cdots\prec \Gamma_\mu
$$ 
denote the elements of the support of  
$\mathcal J_{\underline b}$ (ordered with respect to Christol order).  
Furthermore, Lemma \ref{lem: sauts} implies that $\Gamma_i$ has also multiplicity $m_i^+$ in $\mathcal J_{\underline b}^+$ and $m_i^-$ in $\mathcal J_{\underline b}^-$.  
Let $x\in \mathbb R$ and set $\nu:=\sup\{i\in\{1,\ldots,\mu\} : \Gamma_i\preceq x\}$. Then, while setting $\Gamma_{-\infty}:=0$, we obtain that 
$$\Xi(\underline{b},x)=\Xi(\underline{b},\Gamma_\nu)=
\left\{\begin{array}{cl}
m_1+\cdots+m_\nu & \textup{if $\nu\geq 1$\,,}\\
0 & \textup{if $\nu=-\infty$\,.}
\end{array}\right.
$$

We deduce that 
 \begin{equation}\label{eq: deltax}
 \Delta_b([0,1])=\{0,\Delta_b(\gamma_1),\ldots,\Delta_b(\gamma_\mu)\}=\{0,\Xi(\underline{b},\Gamma_1),\ldots,\Xi(\underline{b},\Gamma_\mu)\}=\Xi(\underline{b},\mathbb R)\,.
 \end{equation}
This shows that Assertion~$\mathrm{(ii)}$ is equivalent to the fact that  $\Delta_b$ is non-negative on $[0,1]$ for all $b$ large enough. 
 On the other hand, the identity $\Delta_b(x+k)=\Delta_b(x)+k\Delta_b(1)$ proved for $k\in\mathbb Z$ in Lemma \ref{lem:DeltaJumps} shows that 
 $\Delta_b$ is non-negative on $[0,1]$ if and only if it is non-negative on $\mathbb R_{\geq 0}$. 
 In the end, we obtain that Assertion~$\mathrm{(i)}$ and Assertion~$\mathrm{(ii)}$ are equivalent, which ends the proof.  
\end{proof} 

We first deduce form Proposition \ref{propo: N-integrality} and Lemma \ref{lem: deltaxi} the following result. 

\begin{thm}\label{thm:N-integrality}
Let us assume that $(Q_{\mathbf{r},\mathbf{t}}(q;n))_{n\geq 0}$ is a well-defined sequence which is not eventually zero. Then the two following assertions are equivalent.
\begin{itemize}
\item[$\mathrm{(i)}$] There exists $C(q)\in\mathbb{Z}[q]\setminus\{0\}$ such that, for every $n\geq 0$, $C(q)^nQ_{\mathbf{r},\mathbf{t}}(q;n)\in\mathbb{Z}[q^{-1},q]$.

\item[$\mathrm{(ii)}$] For every $b\in\{1,\dots,d_{\mathbf{r},\mathbf{t}}\}$ and all $x\in\mathbb{R}$, we have $\Xi_{\mathbf{r},\mathbf{t}}(b,x)\geq 0$.
\end{itemize}
\end{thm}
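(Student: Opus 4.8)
The plan is to obtain Theorem~\ref{thm:N-integrality} by chaining Proposition~\ref{propo: N-integrality} with Lemma~\ref{lem: deltaxi}; the only preliminary point is to check that the hypotheses of the latter hold under the standing assumptions of the theorem.

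First I would record that the assumptions ``well-defined and not eventually zero'' force all the ratios $\alpha_i:=r_i/s_i$ and $\beta_j:=t_j/u_j$ to lie in $\mathbb{Q}\setminus\mathbb{Z}_{\leq 0}$. Recall that a $q$-Pochhammer symbol $(q^r;q^s)_n=\prod_{i=0}^{n-1}(1-q^{r+si})$ vanishes exactly when $r/s\in\mathbb{Z}_{\leq 0}$ and $n>-r/s$. Applying this to the denominators, well-definedness of $(Q_{\mathbf{r},\mathbf{t}}(q;n))_{n\geq 0}$ for every $n\geq 0$ forces $\beta_j\notin\mathbb{Z}_{\leq 0}$; and applying it to the numerators, if some $\alpha_i$ were in $\mathbb{Z}_{\leq 0}$ then $(q^{r_i};q^{s_i})_n$ would vanish for all $n>-\alpha_i$, making the sequence eventually zero, against our hypothesis. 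Hence $\alpha_i,\beta_j\notin\mathbb{Z}_{\leq 0}$ for all $i,j$, which are precisely the hypotheses of Lemma~\ref{lem: deltaxi} for the pair $(\mathbf{r},\mathbf{t})$. In particular every term $Q_{\mathbf{r},\mathbf{t}}(q;n)$ is then well-defined, so Proposition~\ref{propo: N-integrality} applies as well.

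Next I would simply concatenate the equivalences. By Proposition~\ref{propo: N-integrality}, Assertion~$\mathrm{(i)}$ of the theorem is equivalent to the statement that $\Delta_b^{\mathbf{r},\mathbf{t}}$ is non-negative on $\mathbb{R}_{\geq 0}$ for all but finitely many positive integers $b$. This is exactly Assertion~$\mathrm{(i)}$ of Lemma~\ref{lem: deltaxi}, which the lemma shows to be equivalent to its Assertion~$\mathrm{(ii)}$: that $\Xi_{\mathbf{r},\mathbf{t}}(b,x)\geq 0$ for every $b\in\{1,\dots,d_{\mathbf{r},\mathbf{t}}\}$ and all $x\in\mathbb{R}$. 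The last condition is Assertion~$\mathrm{(ii)}$ of the theorem, and the proof is complete.

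Since the analytic and combinatorial content is entirely contained in Proposition~\ref{propo: N-integrality} (itself resting on Corollary~\ref{coro:valuation} and Lemma~\ref{lem:DeltaJumps}) and in Lemma~\ref{lem: deltaxi} (resting on the jump-ordering Lemma~\ref{lem: sauts}), there is no substantial obstacle at this stage. The only step demanding care is the bookkeeping of the first paragraph, making sure that the two mild hypotheses of the theorem are exactly strong enough to license both Proposition~\ref{propo: N-integrality} and Lemma~\ref{lem: deltaxi}.
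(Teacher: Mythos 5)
Your proposal is correct and coincides with the paper's own argument: the paper obtains Theorem~\ref{thm:N-integrality} precisely by combining Proposition~\ref{propo: N-integrality} with Lemma~\ref{lem: deltaxi}. Your preliminary check that ``well-defined and not eventually zero'' forces $\alpha_i,\beta_j\notin\mathbb{Z}_{\leq 0}$, so that the hypotheses of Lemma~\ref{lem: deltaxi} are met, is exactly the right bookkeeping (the paper leaves it implicit).
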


Finally, we can achieve the proof of our main $q$-integrality criterion.  

\begin{proof}[Proof of Theorem~\ref{thm:N-integralityPolyn}]
The result is a straightforward consequence of Corollary \ref{coro:integrality} and Lemma \ref{lem: deltaxi}. 
\end{proof}

As discussed in Section \ref{sec: first criteria},  efficient criteria for the $q$-integrality of the sequences $(Q_{\mathbf{r},\mathbf{t}}(q;n))_{n\leq 0}$ and $(Q_{\mathbf{r},\mathbf{t}}(q;n))_{n\in\mathbb{Z}}$ can also be derived from Theorems \ref{thm:N-integralityPolyn} and \ref{thm:N-integrality}.

\section{Examples and applications}\label{sec: ex}

In this last section, we give an overview of the computation of Christol step functions through some classical examples.

\subsection{General considerations}\label{sec:general}

We continue with the general notation of this paper. 
For every $b\in\{1,\dots,d_{\mathbf{r},\mathbf{t}}\}$, we have defined in Section \ref{sec:fcChristol} the step function $\Xi_{\mathbf{r},\mathbf{t}}(b,\cdot)$. 
Using the notation used in the proof of Lemma \ref{lem: deltaxi}, we obtain that 
$$
\Xi_{\mathbf{r},\mathbf{t}}(b,x):=\#\{\!\{\gamma\in \mathcal J_b^+\,:\,\gamma\preceq x\}\!\}-\#\{\!\{\gamma\in \mathcal J_b^-\,:\,\gamma\preceq x\}\!\}\,.
$$
When $b\in\{1,\dots,d_{\mathbf{r},\mathbf{t}}\}$ is coprime to $d_{\mathbf{r},\mathbf{t}}$, the function $\Xi_{\mathbf{r},\mathbf{t}}(b,\cdot)$ 
is  easier to compute since we have 
$$
\mathcal J_b^+=\{\!\{ a\alpha_1,\dots,a\alpha_v\}\!\}\quad\textup{and}\quad \mathcal J_b^-=\{\!\{a\beta_1,\dots,a\beta_w\}\!\}\,,
$$
where $a$ is the unique integer in $\{1,\dots,d_{\mathbf{r},\mathbf{t}}\}$ satisfying $ab\equiv 1\mod d_{\mathbf{r},\mathbf{t}}$. 
Theorem C can then be rephrased as follows: 
$$(Q_{\boldsymbol{\alpha},\boldsymbol{\beta}}(n))_{n\geq 0} \mbox{ is $N$-integral } \iff \forall  b\in \{1,\ldots, d_{\mathbf{r},\mathbf{t}}\} \mid  
\gcd(b,d_{\mathbf{r},\mathbf{t}})=1:  \Xi_{\mathbf{r},\mathbf{t}}(b,x)\geq 0\,, \, 
\forall x\in\mathbb{R}\,.
$$
 
Starting with an $N$-integral hypergeometric sequence 
$$
\frac{(\alpha_1)_n\cdots (\alpha_v)_n}{(\beta_1)_n\cdots (\beta_w)_n} \quad\quad n\geq 0
$$
and taking $\mathbf{r}$ and $\mathbf{t}$ such that
$$
Q_{\mathbf{r},\mathbf{t}}(q;n)=\frac{(q^{r_1};q^{s_1})_n\cdots(q^{r_v};q^{s_v})_n}{(q^{t_1};q^{u_1})_n\cdots(q^{t_w};q^{u_w})_n}\,,
$$
with $r_i/s_i=\alpha_i$ and $t_j/u_j=\beta_j$, Lemma \ref{lem: sauts} ensures the existence of a constant $\mathfrak{c}_{\mathbf{r},\mathbf{t}}$ such that, 
for every integer $b$ coprime to $d_{\mathbf{r},\mathbf{t}}$ and larger than $\mathfrak{c}_{\mathbf{r},\mathbf{t}}$, we have 
$$
v_{\phi_b}(Q_{\mathbf{r},\mathbf{t}}(q;n))=\Delta^{\mathbf{r},\mathbf{t}}_b(n/b)\geq 0\,.
$$ 
Indeed, for $b>\mathfrak{c}_{\mathbf{r},\mathbf{t}}$, Lemma \ref{lem: sauts} shows that the $\leq$-ordering of the jumps of 
$\Delta_b^{\mathbf{r},\mathbf{t}}$ on $[0,1]$ is the same as the $\preceq$-ordering of the ones of $\Xi_{\mathbf{r},\mathbf{t}}(\underline{b},\cdot)$ on $\mathbb{R}$, 
where $\underline{b}$ is the unique representative in $\{1,\dots,d_{\mathbf{r},\mathbf{t}}\}$ of $b$ modulo $d_{\mathbf{r},\mathbf{t}}$. 
In particular, $\Delta_b^{\mathbf{r},\mathbf{t}}$ is non-negative on $\mathbb{R}_{\geq 0}$ as expected.

Hence the denominator of $Q_{\mathbf{r},\mathbf{t}}(q;n)$ could only contain cyclotomic polynomials $\phi_b(q)$ with $b\leq\mathfrak{c}_{\mathbf{r},\mathbf{t}}$ 
or $b$ not coprime to $d_{\mathbf{r},\mathbf{t}}$. The situation with such numbers $b$ is much more complicated and strongly depends on the
 $\gcd$'s of the pairs $(r_i,s_i)$ and $(t_j,u_j)$. 

\medskip

Let us first consider the case where $\gcd(r_i,s_i)=1$ and $\gcd(t_j,u_j)=1$ for all $i$ and $j$. Let $b\in\{1,\dots,d_{\mathbf{r},\mathbf{t}}\}$, 
$\tilde{b}$ be the greatest divisor of $b$ coprime to $d_{\mathbf{r},\mathbf{t}}$, and let $a$ be the unique integer in $\{1,\dots,d_{\mathbf{r},\mathbf{t}}\}$ 
satisfying $a\tilde{b}\equiv 1\mod d_{\mathbf{r},\mathbf{t}}$. Then, following the notation of Section \ref{sec:fcChristol}, we find  $c_i=d_j=1$, so that 
$$
V_b:=\{1\leq i\leq v\,:\,\gcd(s_i,b)=1\}\quad\textup{and}\quad W_b:=\{1\leq j\leq w\,:\,\gcd(u_j,b)=1\}\,,
$$
which yields 
$$
\mathcal J^+_b:=\{\!\left\{\langle e_i\alpha_i\rangle-\lfloor 1-a\alpha_i\rfloor\,:\,i\in V_b\right\}\!\}\quad\textup{and}\quad \mathcal J_b^-:=\{\!\left\{\langle f_j\beta_j\rangle-\lfloor 1-a\beta_j\rfloor\,:\,j\in W_b\right\}\!\}\,.
$$
Hence each \og classical \fg{} jump occurring at $a\alpha_i$ (by this, we mean the jumps occurring when $b$ is coprime to $d_{\mathbf{r},\mathbf{t}}$) either disappears because $b$ is not coprime to $s_i$, or is  replaced by a jump at $\langle e_i\alpha_i\rangle-\lfloor 1-a\alpha_i\rfloor$ when $b$ is coprime to $s_i$. Even in this particular case, 
we already understand that the 
new step functions can behave in a very different way than the classical ones. 

As an illustration, we consider the simple example 
$$
Q_{\mathbf{r},\mathbf{t}}(q;n):=\frac{(q;q^3)_n(q^2;q^3)_n}{(q;q^2)_n(q;q)_n}\,,
$$
which was introduced at the end of Section \ref{sec:mainresults}  and corresponds to $\mathbf{r}=((1,3),(2,3))$ and $\mathbf{t}=((1,2),(1,1))$. 
We have
$$
\left(\frac{(1-q^2)(1-q)}{(1-q^3)^{2}}\right)^nQ_{\mathbf{r},\mathbf{t}}(q;n)\underset{q\rightarrow 1}{\longrightarrow}\frac{(1/3)_n(2/3)_n}{(1/2)_n(1)_n}\,,
$$
the right-hand side being $N$-integral. This can be derived from Formula \eqref{eq: dn!}. 
We find that $d_{\mathbf{r},\mathbf{t}}=6$, and for $b=3$ we obtain $\tilde{b}=1$ and $a=1$. This yields $V_3=\emptyset$, $W_3=\{1,2\}$, and $f_1=f_2=1$. 
Hence $\mathcal J_3^+=\emptyset$ and $\mathcal J_3^-=\{\!\{1/2,1\}\!\}$, so that $\Xi(3,1/2)<0$. Thus, we deduce from Theorem~\ref{thm:N-integralityPolyn} that  
the sequence $(Q_{\mathbf{r},\mathbf{t}}(q;n))_{n\geq 0}$ is not $q$-integral. 

On the other hand, we have 
\begin{equation}\label{eq:binom32}
\frac{(q;q^3)_n(q^2;q^3)_n}{(q;q^2)_n(q;q)_n}\cdot\frac{(q^3;q^3)_n}{(q^2;q^2)_n}=\qbi{3n}{2n}{q}\in\mathbb{Z}[q]\,,
\end{equation}
which shows that  the corresponding $q$-hypergeometric sequence is obviously $q$-integral. 
In order to understand the effect of the extra factors $(q^3;q^3)_n$ and $(q^2;q^2)_n$, 
we have to investigate the case where $\gcd(r_i,s_i)\neq 1$.

When $\gcd(r_i,s_i)\neq 1$, we possibly have $c_i=\gcd(r_i,s_i,b)\neq 1$. In this case, either $\gcd(s_i,b)\neq c_i$ and the \og classical \fg{} jump at 
$a\alpha_i$ disappears, or there is an integer $e_i$ satisfying $be_i\equiv c_i\mod s_i$ and the jump at $a\alpha_i$ splits into  $c_i$ distinct jumps at 
$$
\frac{\langle e_i\alpha_i\rangle+k}{c_i}-\lfloor 1-a\alpha_i\rfloor\,,\quad\quad 0\leq k\leq c_i-1\,.
$$ 

Let us now return to \eqref{eq:binom32} and consider the case where $b=3$ . Then, we find that $c_3=3$,  $V_3=\{3\}$, and  $e_3=1$. 
This yields jumps with amplitude $+1$ at all elements of the (multi)set $\mathcal J_3^+=\{\!\{1/3, 2/3, 1\}\!\}$. 
On the other hand, we have $W_3=\{1,2,3\}$ and $f_1=f_2=f_3=1$, which yields 
jumps with amplitude $-1$ at all elements of the multiset $\mathcal J_3^-=\{\!\{1/2,1,1\}\!\}$. In the end, we get that 
\begin{equation}\label{eq: prec3}
\Gamma_1=\frac{1}{3}\prec \Gamma_2=\frac{1}{2}\prec\Gamma_3 =\frac{2}{3}\prec \Gamma_4=1\,,
\end{equation}
with $m_1=1$, $m_2=-1$, $m_3=1$, and $m_4=-1$. It follows that the step function $\Xi(3,\cdot)$ is non-negative on $\mathbb{R}$, as expected.

\subsection{$q$-Factorial ratios}

Let us recall that 
$[n]_q=(1-q^n)/(1-q)$, so that 
$$
[n]_q=\prod_{b\geq 2,\,b\mid n}\phi_b(q) 
$$
and 
\begin{equation}\label{eq: n!}
[n]!_q:=\prod_{i=1}^n\frac{1-q^i}{1-q}=\prod_{b\geq 2,\,b\mid n}\phi_b(q)^{\lfloor n/b\rfloor}\,.
\end{equation}
Given two vectors $e:=(e_1,\ldots,e_v)$ and $f:=(f_1,\ldots,f_w)$ whose coordinates are positive integers, 
we define as in~\cite{WZ} the $q$-analog of the factorial ratio $Q_{e,f}(n)$ as 
\begin{equation*}
Q_{e,f}(q; n):=\frac{[e_1n]!_q\cdots[e_vn]!_q}{[f_1n]!_q\cdots[f_wn]!_q}\,\cdot
\end{equation*}
We deduce from \eqref{eq: n!} that 
\begin{equation*}
 Q_{e,f}(q; n)=\prod_{b\geq 2,\,b\mid n}\phi_b(q)^{\Delta_{e,f}(n/b)}\,,
\end{equation*}
where 
$$
\Delta_{e,f}(x)=\sum_{i=1}^v\lfloor e_ix\rfloor-\sum_{j=1}^w\lfloor f_j x\rfloor 
$$
 is the classical Landau function, as defined  in~\eqref{eq:deltadef}. 
We easily obtain that $Q_{e,f}(q;n)$ is $q$-integral if and only if $\Delta_{e,f}$ is non-negative on $[0,1]$. 
Note that these properties are also equivalent to the fact that $Q_{e,f}(q;n)\in\mathbb Z[q]$ (see also~\cite{WZ} where a positivity conjecture of the coefficients of these polynomials is proposed). 
It is therefore much more efficient  to work with $\Delta_{e,f}$  than to compute the corresponding Christol functions.  

The example given in  \eqref{eq:binom32} corresponds to  $e=(3)$ and $f=(2,1)$, so that  
$$
\Delta_{e,f}(x)=\lfloor 3x\rfloor-\lfloor 2x\rfloor-\lfloor x\rfloor\,.
$$
On $[0,1]$, this step function has jumps with positive amplitude $+1$ at $1/3$ and $2/3$, and jumps with negative amplitude $-1$ at $1/2$ and $1$. 
As expected, 
we retrieve the same ordering as in \eqref{eq: prec3} for the jumps of $\Xi(3,\cdot)$.

\subsection{A famous non-factorial example}\label{sec:ChristolExample}

When introducing his step functions in  \cite{Christol}, 
Christol was motivated by the following question:  is it true that an $N$-integral hypergeometric series is the diagonal of a rational fraction in several variables?  
The hypergeometric sequence 
\begin{equation}\label{eq:ChristolExample}
\frac{(1/9)_n(4/9)_n(5/9)_n}{(1/3)_n(1)_n^2} \quad\quad n\geq 0 
\end{equation}
is one of the simplest examples of an $N$-integral hypergeometric sequence for which the question is still open 
(although recent progress in this direction has been made in \cite{AKM20} and \cite{BS22}).  

In this case, the six Christol functions associated with each $b$ coprime to $9$ are non-negative on $\mathbb{R}$.  
By Theorem C, this ensures that this hypergeometric sequence is $N$-integral. A precise formula for the smallest positive integer $N_0$ 
is given in \cite[Theorem~$4$]{DRR}: here we get that $N_0=9^3$.

As already discussed, a natural $q$-analog of \eqref{eq:ChristolExample} can be defined as 
$$
\left(\frac{(1-q^3)(1-q)^2}{(1-q^9)^3}\right)^n\frac{(q;q^9)_n(q^4;q^9)_n(q^5;q^9)_n}{(q;q^3)_n(q;q)_n^2} \quad\quad n\geq 0 \,.
$$
The $q$-integrality of this sequence is equivalent to the one of the $q$-hypergeometric sequence 
$Q_{\mathbf{r},\mathbf{t}}(q;n)$, where $\mathbf{r}=((1,9),(4,9),(5,9))$ and $\mathbf{t}=((1,3),(1,1),(1,1))$. 

It remains to consider the Christol functions associated with $b\in\{3,6,9\}$. 
For $b=3$, we have $\gcd(9,b)=3\neq 1$ so that $\mathcal J_3^+=\emptyset$. But due to the factors $(q;q)_n^2$ in the denominator, we obtain that 
$\mathcal J_3^-=\{\!\{1,1\}\!\}$, so that $\Xi_{\mathbf{r},\mathbf{t}}(b,1)<0$.  
We deduce from Theorem \ref{thm:N-integralityPolyn} that the sequence $(Q_{\mathbf{r},\mathbf{t}}(q;n))_{n\geq 0}$ is not $q$-integral. 
In this example, all the ``classical jumps" with positive amplitude have disappeared for $b=3$. 

In fact, we can retrieve $q$-integrality by  adding a factor $(q^9;q^9)_n$ to the numerator and a factor $(q;q)_n$ to the denominator. This  
 leads to the slightly modified $q$-analog: 
$$
\left(\frac{(1-q^3)(1-q)^3}{(1-q^9)^4}\right)^n\frac{(q;q^9)_n(q^4;q^9)_n(q^5;q^9)_n(q^9;q^9)_n}{(q;q^3)_n(q;q)_n^3} \quad\quad n\geq 0\,.
$$
With this new choice of parameters $\mathbf{r'}$ and $\mathbf{t'}$, the functions $\Xi_{\mathbf{r'},\mathbf{t'}}(b,\cdot)$ for $b$ coprime to $9$ remains unchanged. 
However, for $b$ in $\{3,6,9\}$, one finds that $V_b$ is no longer empty. A computation shows that $V_b=\{4\}$, $W_b=\{2,3,4\}$, 
$\mathcal J_3^-=\mathcal J_9^-=\{\!\{1,1,1\}\!\}$, $\mathcal J_6^-=\{\!\{5,5,5\}\!\}$, while 
$$
\mathcal J_3^+=\left\{\!\!\left\{\frac{1}{3},\frac{2}{3},1\right\}\!\!\right\},\quad \mathcal J_6^+=\left\{\!\!\left\{\frac{1}{3}+4,\frac{2}{3}+4,5\right\}\!\!\right\}\,,\quad\textup{and}\quad 
\mathcal J_9^+=\left\{\!\!\left\{\frac{1}{9},\frac{2}{9},\dots,\frac{8}{9},1\right\}\!\!\right\}\, .
$$ 
In all cases, $\Xi_{\mathbf{r'},\mathbf{t'}}(b,\cdot)$ is now non-negative on $\mathbb{R}$ and we infer from Theorem \ref{thm:N-integrality}  that the sequence 
$(Q_{\mathbf{r'},\mathbf{t'}}(q;n))_{n\geq 0}$ is $q$-integral.

Finally, we consider a third $q$-analog of the hypergeometric sequence \eqref{eq:ChristolExample}, which we define as 
\begin{equation}\label{eq:reftilde}
\widetilde{Q}_{\boldsymbol{\alpha},\boldsymbol{\beta}}(q;n)=\frac{(q^{1/9};q)_n(q^{4/9};q)_n(q^{5/9};q)_n}{(q^{1/3};q)_n(q;q)_n^2}\, \quad\quad n\geq 0\,.
\end{equation}
As already discussed, the $q^{1/9}$-integrality of $(\widetilde{Q}_{\boldsymbol{\alpha},\boldsymbol{\beta}}(q;n))_{n\geq 0}$ is equivalent to the $q$-integrality of the sequence 
$$
\widetilde{Q}_{\boldsymbol{\alpha},\boldsymbol{\beta}}(q^9;n)=\frac{(q;q^9)_n(q^4;q^9)_n(q^5;q^9)_n}{(q^3;q^9)_n(q^9;q^9)_n^2} \quad\quad n\geq 0\,.
$$
Furthermore, we have $\widetilde{Q}_{\boldsymbol{\alpha},\boldsymbol{\beta}}(q^9;n)=Q_{\mathbf{r},\mathbf{t}}(q;n)$ for a suitable choice of 
vectors $\mathbf{r}$ and $\mathbf{t}$. As previously, 
a computation shows that for $b=3$, we have $\mathcal J^+_3=\emptyset$ while $1\in \mathcal J_3^-$, so that 
$\Xi_{\mathbf{r},\mathbf{t}}(3,1)<0$. We deduce from Theorem \ref{thm:N-integrality} that $(Q_{\mathbf{r},\mathbf{t}}(q;n))_{n\geq 0}$ is not $q$-integral. Then, it  
follows that the sequence defined in \eqref{eq:reftilde} is not $q^{1/9}$-integral.

We observe that, in this case, we cannot use the same trick as before. Indeed, multiplying $\widetilde{Q}_{\mathbf{r},\mathbf{t}}(q^9;n)$ by $(q^9;q^9)_n/(q;q)_n$ amounts to multiplying \eqref{eq:reftilde} by $(q;q)_n/(q^{1/9};q^{1/9})_n$ which does not correspond to any choice of parameters $\boldsymbol{\alpha}$ and $\boldsymbol{\beta}$.

\end{document}